\documentclass[12pt]{amsart}
\usepackage{amssymb}
\usepackage{fancybox}
\usepackage{mathrsfs}
\usepackage{amsrefs}
\usepackage{color}
\usepackage[all]{xy}
\usepackage{ulem}
\usepackage{comment}
\usepackage{tikz}
\usepackage[margin=3cm]{geometry}

\makeatletter\tagsleft@false\makeatother

\theoremstyle{plain}
\newtheorem{thm}{Theorem}[section]
\newtheorem{prop}[thm]{Proposition}
\newtheorem{lem}[thm]{Lemma}
\newtheorem{cor}[thm]{Corollary}
\theoremstyle{definition}
\newtheorem{defi}[thm]{Definition}
\newtheorem{rem}[thm]{Remark}
\newtheorem{ex}[thm]{Example}

\newcommand{\ZZ}{\mathbb{Z}}
\newcommand{\ZZp}{\mathbb{Z}_{\geq 0}}
\newcommand{\QQ}{\mathbb{Q}}
\newcommand{\RR}{\mathbb{R}}
\newcommand{\CC}{\mathbb{C}}

\newcommand{\LL}{\mathbb{L}}

\newcommand{\KK}{\mathbb{K}}

\newcommand{\nPoly}{\mathbb{C}[x_{1},\dots,x_{n}]}
\newcommand{\nLPoly}{\mathbb{C}[x^{\pm}_{1},\dots,x^{\pm}_{n}]}

\newcommand{\calF}{\mathcal{F}}
\newcommand{\calH}{\mathcal{H}}
\newcommand{\calG}{\mathcal{G}}
\newcommand{\calL}{\mathcal{L}}
\newcommand{\calM}{\mathcal{M}}
\newcommand{\calS}{\mathcal{S}}

\newcommand{\pcalH}{{}^p\mathcal{H}}

\newcommand{\CS}{(\CC^*)}
\newcommand{\ZZpn}{\mathbb{Z}_{\geq 0}^n}
\newcommand{\RRpn}{\mathbb{R}_{\geq 0}^n}
\DeclareMathOperator{\Sp}{\mathrm{sp}}

\newcommand{\hlr}{\lhook\joinrel\longrightarrow}
\newcommand{\lr}{\longrightarrow}
\newcommand{\simar}{\stackrel{\sim}{\longrightarrow}}

\newcommand{\DR}{\mathrm{R}}
\newcommand{\GR}{\mathrm{Gr}}

\newcommand{\zset}{V}

\newcommand{\RGVZ}[1]{\DR\Gamma_{V\setminus \{0\}}(#1)_{0}}

\newcommand{\Ntpo}{\mathrm{NP}(f)}
\newcommand{\Ntph}{\Gamma_{+}(f)}
\newcommand{\Ntbd}{\Gamma_{f}}

\newcommand{\Vol}{\mathrm{Vol}}
\newcommand{\ov}[1]{\overline{#1}}

\newcommand{\inpro}[2]{\langle{#1,#2}\rangle}

\newcommand{\sumcom}[1]{\colon\mbox{\tiny #1}}

\DeclareMathOperator{\SPEC}{\mathrm{Spec}}
\DeclareMathOperator{\Relint}{\mathrm{rel_{\cdot}int}}

\DeclareMathOperator{\Conv}{\mathrm{Conv}}
\DeclareMathOperator{\Aff}{\mathrm{Aff}}

\DeclareMathOperator{\Edge}{\mathrm{Edge}}

\DeclareMathOperator{\supp}{\mathrm{supp}}

\newcommand{\DBC}[1]{\mathrm{D^{b}_{c}}(#1)}
\newcommand{\DB}[1]{\mathrm{D^{b}}(#1)}
\newcommand{\Perv}[1]{\mathrm{Perv}(#1)}
\newcommand{\Mod}[1]{\mathrm{Mod}(#1)}

\newcommand{\DBMHM}[1]{\mathrm{D^{b}MHM}(#1)}

\newcommand{\Id}{\mathrm{Id}}
\DeclareMathOperator{\Int}{\mathrm{Int}}

\newcommand{\Ftsigma}{F(\tl{\sigma})}

\newcommand{\Dppp}{\DR\pi'_{*}}

\newcommand{\nearf}{\calG^{\bullet}}

\newcommand{\pcHW}[2]{\pcalH^{#1}\Dppp W_{#2}\nearf}

\newcommand{\tl}[1]{\widetilde{#1}}

\newcommand{\muhat}{\hat{\mu}}

\newcommand{\link}{\mathrm{lk}}

\newcommand{\nearll}{{}^p\psi_{f,\lambda,\overline{\lambda}}(\CC_{\CC^n}[n])}

\newcommand{\Precneq}{\hspace{0.5mm}\preceq\hspace{-4mm}\raisebox{-0.95mm}{\scalebox{1}[0.4]{/}}\hspace{2mm}}

\DeclareMathOperator{\Exp}{\mathrm{exp}}


\title[Milnor monodromies and MHS for non-isolated singularities]
{Milnor monodromies and mixed Hodge structures for non-isolated hypersurface singularities}

\author[Takahiro SAITO]{\large{Takahiro SAITO}}

\address{Institute of Mathematics, University of
Tsukuba, 1-1-1, Tennodai,
Tsukuba, Ibaraki, 305-8571, Japan.}
\email{takahiro@math.tsukuba.ac.jp}


\subjclass[2010]{32C38, 32S35, 32S55, 14E18, 14M25}
\keywords{Milnor monodromies; mixed Hodge structures; motivic Milnor fibers}

\begin{document}

\maketitle

\begin{abstract}
We study the Milnor monodromies of non-isolated hypersurface singularities and
show that the reduced cohomology groups of the Milnor fibers are concentrated in the
middle degree for some eigenvalues of the monodromies. 
As an application of this result,
we give an explicit formula for some parts of their Jordan normal forms.
\end{abstract}

\tableofcontents

\section{Introduction}\label{introduction}
The Milnor monodromies of complex hypersurface singularities are important subjects in singularity theory.
For isolated hypersurface singular points, we have some algorithms or formulas to compute them.
However, for non-isolated singular points, 
there still remain some difficulties in computing them explicitly.
In this paper, we will show that
even for non-isolated singular points,
the generalized eigenspaces of the Milnor monodromies for ``good" eigenvalues
have some nice properties similar to those for isolated singular points.
By this result, we give an explicit formula for some parts of the Jordan normal forms of the Milnor monodromies.

Let $f(x)\in\nPoly$ be a polynomial of $n(\geq 2)$ variables with coefficients in $\CC$ such that $f(0)=0$ and $V:=f^{-1}(0)\subset \CC^n$ the hypersurface defined by it.
We denote by $F_{f,0}$ the Milnor fiber of $f$ at $0$ and by
\[\Phi_{j}\colon H^{j}(F_{f,0};\CC)\simar H^{j}(F_{f,0};\CC)\]
the $j$-th Milnor monodromy for $j\in\ZZ$.
If the origin $0\in V$ is an isolated singular point of $V$,
the Milnor fiber $F_{f,0}$ is homotopic to a bouquet of some $(n-1)$-spheres $S^{n-1}$ by a celebrated theorem of Milnor~\cite{MIL}.
This implies that the reduced cohomology groups $\tl{H}^{j}(F_{f,0};\CC)$ vanish except for $j=n-1$.
However, if $0\in V$ is a non-isolated singular point,
we can not expect to have such a concentration in general.
On the other hand,
for a polynomial $f$ non-degenerate at $0$,
Varchenko~\cite{Varchenko} described explicitly the monodromy zeta function
\[\zeta_{f,0}(t):=\prod_{j\in\ZZ}\det(\Id-t\Phi_{j})^{(-1)^{j}}\in\CC(t)\]
in terms of the Newton polyhedron $\Gamma_{+}(f)$.
If $0\in V$ is an isolated singular point of $V$,
the $(n-1)$-th Milnor monodromy $\Phi_{n-1}$ is the only non-trivial one.
Therefore, in this case, we obtain a formula for the characteristic polynomial of $\Phi_{n-1}$.
However, if $0\in V$ is a non-isolated singular point, Varchenko's formula 
does not tell us any explicit information about the characteristic polynomial of each Milnor monodromy $\Phi_{j}$.

For non-isolated singular points,
there is a similar difficulty also for the mixed Hodge structures of the cohomologies of the Milnor fibers.
Recall that each cohomology group $H^{j}(F_{f,0};\QQ)$ of $F_{f,0}$ is endowed with a mixed Hodge structure $(H^{j}(F_{f,0};\QQ),F^{\bullet}, W_{\bullet})$ defined by Steenbrink~\cite{STEVAN} in the case where $0\in V$ is an isolated singular point and
by Navarro~\cite{Navsur} and M. Saito~\cite{MHM} in the case where $0\in V$ is a non-isolated singular point.
For $j\in\ZZ$ and an eigenvalue $\lambda\in\CC$ of the Milnor monodromy $\Phi_{j}$,
we denote by 
\[H^{j}(F_{f,0};\CC)_{\lambda}\subset H^{j}(F_{f,0};\CC)\]
the generalized eigenspace of $\Phi_{j}$ for $\lambda$.
For $p,q\in\ZZ$, 
we denote by $h^{p,q}_{\lambda}(H^{j}(F_{f,0};\CC))$
the $(p,q)$-mixed Hodge number for the eigenvalue $\lambda$ of the $j$-th cohomology group of $F_{f,0}$ i.e. the dimension of $\GR^{p}_{F}\GR^{W}_{p+q}H^{j}(F_{f,0};\CC)_{\lambda}$.
For $\lambda\in\CC$, we define a polynomial $E_{\lambda}(F_{f,0};u,v)\in\ZZ[u,v]$ with coefficients in $\ZZ$ by
\[E_{\lambda}(F_{f,0};u,v):=\sum_{p,q\in\ZZ}\sum_{j\in\ZZ}(-1)^{j}h^{p,q}_{\lambda}(H^{j}(F_{f,0};\CC))u^pv^q.\]
By using a description of the Motivic Milnor fiber of $f$ at $0$ (see Theorem~\ref{motivicdesc}),
we can describe $E_{\lambda}(F_{f,0};u,v)$ explicitly in terms of certain polynomials defined by the Newton polyhedron $\Gamma_{+}(f)$ (see Corollary~\ref{epolycomp1}).
Moreover, if $0\in V$ is an isolated singular point of $V$,
as in the previous discussion about Varchenko's formula,
we can describe $h^{p,q}_{\lambda}(H^{n-1}(F_{f,0};\CC))$ by our formula.
In particular, we obtain an explicit formula for the Jordan normal form of $\Phi_{n-1}$ (see Matsui-Takeuchi~\cite{MT}).
On the other hand, if $0\in V$ is a non-isolated singular point,
the formula for $E_{\lambda}(F_{f,0};u,v)$ does not tell us any explicit information about each mixed Hodge number $h^{p,q}_{\lambda}(H^{j}(F_{f,0};\CC))$.

In this paper, 
we follow an idea of Takeuchi-Tibar~\cite{TakeTib} for monodromies at infinity
and overcome the above-mentioned difficulties by introducing a finite subset $R_{f}\subset \CC$ of ``bad'' eigenvalues of the Milnor monodromies (see Definition~\ref{Rf}) as follows.
\begin{thm}[see Theorem~\ref{main1}]\label{intromain1}
Assume that $f$ is non-degenerate at $0$.
Then, for any $\lambda\notin R_{f}$ we have a concentration:
\[\tl{H}^{j}(F_{f,0};\CC)_{\lambda}\simeq 0\quad ( j\neq n-1).\]
\end{thm}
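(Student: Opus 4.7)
The plan is to transpose the strategy of Takeuchi--Tibar~\cite{TakeTib} for monodromies at infinity to the local setting at $0\in\CC^n$. Since $f$ is non-degenerate at $0$, I would begin by choosing a unimodular subdivision $\Sigma$ of the normal fan of $\Ntph$ refining the positive orthant $\RRpn$. The associated toric birational map $\pi\colon\tl{X}\to\CC^n$ is an isomorphism over $\CC^n\setminus\{0\}$, $\tl{X}$ is smooth, and in a neighbourhood of $\pi^{-1}(0)$ the divisor $(f\circ\pi)^{-1}(0)$ has simple normal crossings; in each affine toric chart $f\circ\pi$ is a monomial whose exponents are read off from the vertex of $\Ntph$ selected by the corresponding cone of $\Sigma$.

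By proper base change for nearby cycles, the stalk at $0$ of $\Dpp\psi_{f\circ\pi}\CC_{\tl{X}}$ computes $\psi_{f}\CC_{\CC^n}$ at $0$, whose $\lambda$-part carries $\tl{H}^{\bullet}(F_{f,0};\CC)_{\lambda}$. Writing $\calF_{\lambda}:=\ppsi_{f\circ\pi,\lambda}(\CC_{\tl{X}}[n])$, the theorem reduces to showing that the hypercohomology $\mathbb{H}^{\bullet}(\pi^{-1}(0);\calF_{\lambda})$ of this perverse sheaf is concentrated in a single degree whenever $\lambda\notin R_{f}$. A local A'Campo-type computation of nearby cycles along the simple normal crossing divisor $(f\circ\pi)^{-1}(0)$ shows that $\calF_{\lambda}$ is nonzero on a toric stratum only when $\lambda$ is a root of unity whose order divides the monomial multiplicity of one of the irreducible boundary divisors meeting that stratum.

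The set $R_{f}$ from Definition~\ref{Rf} is designed to collect precisely those eigenvalues that come from faces of $\Ntph$ touching some coordinate hyperplane of $\CC^n$—equivalently, from exceptional or strict-transform components of $\pi$ whose intersection with $\pi^{-1}(0)$ is not proper. Consequently, for $\lambda\notin R_{f}$, the support of $\calF_{\lambda}|_{\pi^{-1}(0)}$ is contained in the union of the purely interior exceptional divisors of $\pi$, which is a projective subvariety of $\pi^{-1}(0)$ of dimension at most $n-1$. On this compact support, I would stratify by the intersection pattern of exceptional divisors; on each stratum $\calF_{\lambda}$ is (up to shift) the intermediate extension of a monodromy local system, and perversity together with Artin vanishing on the quasi-projective open parts, Poincaré--Lefschetz duality on the projective strata, and the purity of the nearby cycle perverse sheaf combine to force $\mathbb{H}^{i}(\pi^{-1}(0);\calF_{\lambda})=0$ for $i\neq 0$. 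The shift conventions align this single nonzero degree with $j=n-1$, which is the desired concentration.

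The main obstacle is the matching between the combinatorial condition $\lambda\notin R_{f}$ and the geometric statement that $\calF_{\lambda}$ has compact support inside $\pi^{-1}(0)$: one must show that no contribution survives from the strict transform of $V$, nor from exceptional divisors that meet a coordinate hyperplane of $\CC^n$, and then verify that the remaining interior strata are indeed projective so that the perverse-sheaf cohomological concentration goes through. Once this support localisation is in place, the remainder is a fairly standard application of the six-functor formalism for perverse sheaves on toric varieties.
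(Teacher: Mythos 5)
Your setup (toric resolution adapted to $\Ntph$, proper base change, reduction to a computation over $\pi^{-1}(0)$) matches the paper's Section~4. But the mechanism you invoke for the concentration does not actually deliver it, and this is where the real work lies.

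The paper does not argue directly that the hypercohomology of a perverse sheaf over a projective set is concentrated. Instead it proves Proposition~\ref{compare1prop}: the natural map $j_0^!\psi_{f,\lambda}(\CC_{\CC^n}[n])\to j_0^{-1}\psi_{f,\lambda}(\CC_{\CC^n}[n])$ is an isomorphism, i.e.\ $\DR\Gamma_{V\setminus\{0\}}(\psi_{f,\lambda}(\CC_{\CC^n}))_0\simeq 0$. The concentration then follows formally because $j_0^{-1}$ is right $t$-exact and $j_0^!$ is left $t$-exact, so if they agree the result is a perverse sheaf on a point and hence concentrated in one degree. Nothing in your plan establishes this costalk$=$stalk statement, and it is the only reason one degree survives. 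Artin vanishing plus Poincar\'e--Lefschetz duality for a perverse sheaf on a compact set of dimension $\le n-1$ only confines the hypercohomology to the range $[-(n-1),\,n-1]$; for example $\CC_X[\dim X]$ on a smooth projective $X$ is perverse, pure, and has hypercohomology in every degree from $-\dim X$ to $\dim X$. Purity and the decomposition theorem spread things out across degrees rather than concentrating them.

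Your support claim is also not the right statement. The support of $\calF_\lambda|_{\pi^{-1}(0)}$ is tautologically inside $\pi^{-1}(0)$; what one would need is that $\calF_\lambda$ vanishes on $V'\setminus\pi^{-1}(0)$ near $\pi^{-1}(0)$, and this is false in general (the nearby cycle sheaf is supported on entire exceptional divisors $E_i$ with $\lambda^{m_{\rho_i}}=1$, and also near $Z\cap E_i$, which extend outside $\pi^{-1}(0)$). The paper never shows a support containment of this kind. What it shows instead is that for each graded piece of the weight filtration, which is built from local systems ${j_I}_!j_I^{-1}\calF_{\lambda,I}$ on the strata $E_I$, the excision complex $\DR\Gamma_{V'\setminus\pi^{-1}(0)}(\cdots)|_{\pi^{-1}(0)}$ vanishes on each torus orbit $T_\sigma\subset\pi^{-1}(0)$ by a K\"unneth argument. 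The content of $\lambda\notin R_f$ is combinatorial: using Lemma~\ref{hullsingmaeq} one shows $\dim\sigma<\dim\tl{\sigma}$, and hence there is an extra ray $\rho_i\prec\sigma'$ not in $\sigma$ with $\lambda^{m_{\rho_i}}\neq 1$, producing a nontrivial rank-one local system on a $\CC^*$-factor whose cohomology is zero. Without this explicit monodromy computation on the orbit coordinates, the vanishing does not follow; it is not a consequence of the support being projective, nor of abstract purity. So the proposal identifies the right locus ($\pi^{-1}(0)$) and the right eigenvalue condition, but is missing both the costalk$=$stalk reduction and the K\"unneth vanishing that makes it go.
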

Note that a more general but less explicit concentration result was given in \cite[Corollary~6.1.7]{DIMCA}. 
By this theorem and Varchenko's formula,
we can compute the multiplicities of eigenvalues $\lambda\notin R_{f}$ in $\Phi_{n-1}$ as follows.
\begin{cor}(see Corollary~\ref{multico})
In the situation of Theorem~\ref{intromain1},
for any $\lambda\notin R_{f}$ the multiplicity of the eigenvalue $\lambda$ in the Milnor monodromy $\Phi_{n-1}$ is equal to that of the factor $(1-\lambda t)$ in a rational function
\[\prod_{\emptyset\neq I\subset \{1,\dots,n\}}\prod_{i=1}^{k_{I}}(1-t^{d_{I,i}})^{(-1)^{n-|I|}\mathrm{Vol}_{\ZZ}(\Gamma_{I,i})}.\]
For the definitions of $k_{I}$, $\Gamma_{I,i}$, $d_{I,i}$ and $\mathrm{Vol}_{\ZZ}(\Gamma_{I,i})$, see Section~\ref{chapmil}.   
\end{cor}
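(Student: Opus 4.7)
The plan is to extract the multiplicity of $\lambda$ in $\Phi_{n-1}$ by comparing the $(1-\lambda t)$-adic valuations of two expressions for the monodromy zeta function $\zeta_{f,0}(t)$: the one obtained directly from its definition, and the one supplied by Varchenko's formula.

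First, starting from
\[
\zeta_{f,0}(t) = \prod_{j \in \ZZ} \det(\Id - t\Phi_{j})^{(-1)^{j}}
\]
and decomposing each $H^{j}(F_{f,0};\CC)$ into generalized eigenspaces, one obtains
\[
\zeta_{f,0}(t) = \prod_{j \in \ZZ}\prod_{\mu \in \CC^{*}} (1-\mu t)^{(-1)^{j} \dim H^{j}(F_{f,0};\CC)_{\mu}}.
\]
On the other hand, since $f$ is non-degenerate at $0$, Varchenko's theorem identifies $\zeta_{f,0}(t)$ with the rational function displayed in the statement.

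Now fix $\lambda \notin R_{f}$. The crucial input is Theorem~\ref{intromain1}: the reduced cohomology $\tl{H}^{j}(F_{f,0};\CC)_{\lambda}$ vanishes for every $j \neq n-1$. For $\lambda \neq 1$ this gives $H^{j}(F_{f,0};\CC)_{\lambda} = 0$ outside degree $n-1$, so in the first product only the factor $j = n-1$ contains a power of $(1-\lambda t)$, namely $(1-\lambda t)^{(-1)^{n-1} \dim H^{n-1}(F_{f,0};\CC)_{\lambda}}$. Matching this with the $(1-\lambda t)$-adic valuation of Varchenko's product then yields the claimed formula. For $\lambda = 1$ an identical argument applies after removing the contribution of the class of the constant function from $H^{0}(F_{f,0};\CC)_{1}$ before invoking the concentration.

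Since Theorem~\ref{intromain1} and Varchenko's formula together do all the substantive work, there is no real obstacle here: the corollary reduces to a bookkeeping computation of $(1-\lambda t)$-adic valuations. The only care required is in keeping track of the sign $(-1)^{n-1}$ and in treating $\lambda = 1$ separately to correct for the constant cohomology class.
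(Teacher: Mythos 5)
Your proposal is correct and follows essentially the same route as the paper, which deduces Corollary~\ref{multico} precisely by combining the concentration of Theorem~\ref{main1} with Varchenko's formula (Theorem~\ref{Varform}); the exponent $(-1)^{n-|I|}$ in the displayed rational function is exactly what absorbs the sign $(-1)^{n-1}$ you track, so the valuation bookkeeping is the intended argument. One small caveat: the case $\lambda=1\notin R_{f}$ only occurs when there are no extremal compact faces (essentially the convenient case), and there the constant class contributes an extra factor $(1-t)^{(-1)^{n-1}}$ to the valuation, so your remark that an ``identical argument'' applies after removing it should be spelled out rather than asserted, since this correction shifts the count by one (a point the paper itself leaves implicit).
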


Moreover, for such $\lambda$ we obtain the mixed Hodge numbers $h^{p,q}_{\lambda}(H^{n-1}(F_{f,0};\CC))$ by our formula for $E_{\lambda}(F_{f,0};u,v)$.

If $0\in V$ is an isolated singular point,
the filtration on $H^{n-1}(F_{f,0};\CC)_{\lambda}$ induced by the weight filtration coincides with the monodromy filtration of $\Phi_{n-1}$.
This implies that 
the Jordan normal form of $\Phi_{n-1}$ for an eigenvalue $\lambda$
can be described by the mixed Hodge numbers $h^{p,q}_{\lambda}(H^{n-1}(F_{f,0};\CC))$.
On the other hand, to the best of our knowledge,
if $0\in V$ is a non-isolated singular point of $V$,
the geometric meaning of the weight filtrations on $H^{j}(F_{f,0};\CC)_{\lambda}$ is not fully understood yet.
For this problem, we obtain the following.
\begin{thm}[see Theorem~\ref{main2}]\label{intromain2}
In the situation of Theorem~\ref{intromain1},
for $\lambda\notin R_{f}$
the filtration on $H^{n-1}(F_{f,0};\CC)_{\lambda}$ induced by the weight filtration
on $H^{n-1}(F_{f,0};\QQ)$ coincides with the monodromy filtration of $\Phi_{n-1}$ centered at $n-1$.
\end{thm}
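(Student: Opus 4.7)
The plan is to use M.~Saito's theory of mixed Hodge modules, in particular his structure theorem for nearby cycles of pure Hodge modules, combined with the concentration result of Theorem~\ref{intromain1}. Set
\[\calG := \nearlam,\]
which is naturally a mixed Hodge module on $V$. Since $\CC_{\CC^n}[n]$ is a pure Hodge module of weight $n$ on $\CC^n$, M.~Saito's theorem asserts that the weight filtration $W_{\bullet}\calG$ coincides (up to a Tate twist in the unipotent case $\lambda=1$) with the monodromy filtration $M(N)_{\bullet}$ of the nilpotent operator $N=\log T_{u}$, centered at an explicit degree which computes to $n-1$ after accounting for the perverse shift. This provides the identification of the weight filtration with the monodromy filtration at the level of the ambient MHM on $V$.

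The next step is to descend this identification from $\calG$ on $V$ to the single cohomology group $H^{n-1}(F_{f,0};\CC)_{\lambda}$. By Theorem~\ref{intromain1}, for $\lambda\notin R_{f}$ the complex $i_{0}^{*}\calG$ is concentrated in cohomological degree $0$, with $\calH^{0}(i_{0}^{*}\calG)\simeq H^{n-1}(F_{f,0};\CC)_{\lambda}$. I would use this concentration, together with the weight-strictness properties of morphisms in M.~Saito's category, to show that the weight filtration on $\calG$ induces cleanly the weight filtration on the MHS $H^{n-1}(F_{f,0};\CC)_{\lambda}$. More concretely, each graded piece $\GR^{W}_{k}\calG$ is a pure Hodge module, and the concentration of $i_{0}^{*}\calG$ in degree $0$ forces the spectral sequence associated to $W_{\bullet}\calG$ applied to $i_{0}^{*}$ to degenerate at $E_{1}$, yielding
\[\GR^{W}_{k}H^{n-1}(F_{f,0};\CC)_{\lambda}\simeq \calH^{0}(i_{0}^{*}\GR^{W}_{k}\calG).\]

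Finally, the nilpotent operator $N$ acting on $\calG$ restricts to $\log((\Phi_{n-1})_{u})$ on $H^{n-1}(F_{f,0};\CC)_{\lambda}$ in a manner compatible with the weight filtrations, thanks to the strictness just established. By uniqueness of the monodromy filtration associated to a nilpotent endomorphism, the filtration on $H^{n-1}(F_{f,0};\CC)_{\lambda}$ induced from $W_{\bullet}\calG$ must then coincide with the monodromy filtration of $\log((\Phi_{n-1})_{u})$ centered at $n-1$, which is the statement of the theorem. The main obstacle is the strictness/degeneration step: one must verify rigorously that the concentration hypothesis implies $\calH^{j}(i_{0}^{*}\GR^{W}_{k}\calG)=0$ for $j\neq 0$ and all $k$, ruling out cancellations across different weight pieces in the spectral sequence. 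This is plausibly handled either by a careful analysis of supports of the $\GR^{W}_{k}\calG$ using the non-degeneracy hypothesis (as in the proof of Theorem~\ref{intromain1}), or by a direct appeal to the weight-strictness of $i_{0}^{*}$ in Saito's formalism, combined with the fact that the concentration places the target in a single cohomological degree.
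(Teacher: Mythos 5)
Your overall strategy is the same as the paper's: use Saito's theorem that the weight filtration on ${}^p\psi_{f,\lambda}(\CC_{\CC^n}[n])$ is the monodromy filtration centered at $n-1$, then transport this to $H^{n-1}(F_{f,0};\CC)_\lambda$ via $j_0^{-1}$ using the concentration from Theorem~\ref{intromain1}. You have also correctly put your finger on the one genuine difficulty: showing that $\calH^{j}(j_{0}^{-1}\GR^{W}_{k}\calG)=0$ for $j\neq 0$ and \emph{all} $k$, so that the weight filtration on $j_{0}^{-1}\calG$ is computed termwise and no cancellations occur.

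However, that difficulty is a real gap in your argument, not a formality. The concentration of $j_{0}^{-1}\calG$ in degree $0$ does \emph{not} imply the concentration of $j_{0}^{-1}W_{k}\calG$ or $j_{0}^{-1}\GR^{W}_{k}\calG$ in degree $0$; spectral-sequence cancellations can and in general do occur. The second alternative you propose (``a direct appeal to the weight-strictness of $i_{0}^{*}$'') cannot close the gap by itself: the only weight-theoretic information $j_{0}^{*}$ gives for free is that it preserves ``weights $\leq k$,'' and dually $j_{0}^{!}$ preserves ``weights $\geq k$''; neither says anything about concentration of $j_{0}^{-1}\GR^{W}_{k}\calG$. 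What actually closes the gap in the paper is Proposition~\ref{compare1propgen}, which establishes the stronger fact that the natural map $j_{0}^{!}W_{k}({}^p\psi_{f,\lambda}(\CC_{\CC^n}[n]))\to j_{0}^{-1}W_{k}({}^p\psi_{f,\lambda}(\CC_{\CC^n}[n]))$ is an isomorphism for every $k$. Its proof is not a formal consequence of Proposition~\ref{compare1prop}: one must pass through $\DR\pi'_{*}$, reduce to each pure graded piece $\GR^{W}_{i}({}^p\psi_{f\circ\pi,\lambda}(\CC_{X_{\Sigma}}[n]))$, invoke the decomposition theorem for the proper toric map $\pi'$, and then re-run the geometric vanishing argument on each summand. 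Once that is done, Lemma~\ref{compare3} deduces $W_{k}j_{0}^{-1}\simeq j_{0}^{-1}W_{k}$ by combining the isomorphism $j_{0}^{!}\to j_{0}^{-1}$ with the two one-sided weight estimates you alluded to. In short, your first alternative (``a careful analysis of supports of the $\GR^{W}_{k}\calG$ using the non-degeneracy hypothesis, as in the proof of Theorem~\ref{intromain1}'') is the right one, but it requires a separate proposition and is not implied by the concentration you already have; your second alternative does not work on its own.
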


We denote by $J_{k,\lambda}$ the number of the Jordan blocks with size $k$ for an eigenvalue $\lambda$ in the Jordan normal form of the Milnor monodromy $\Phi_{n-1}$.
Combining the above theorem with our description of $h^{p,q}_{\lambda}(H^{n-1}(F_{f,0};\CC))$,
for any eigenvalue $\lambda\notin R_{f}$ we can describe it by using the Newton polyhedron $\Ntph$ as follows.

\begin{cor}[see Corollary~\ref{jordan}]
In the situation of Theorems~\ref{intromain1} and \ref{intromain2}, 
for $\lambda\notin R_{f}$
we have
\begin{align*}
\sum_{0\leq k\leq n-1}J_{n-k,\lambda}u^{k+2}
=\sum_{{F\prec \Ntph\colon \mbox{\tiny admissible}}}
u^{\dim{F}+2}l^{*}_{\lambda}(\Delta_{F},\nu;1)\cdot\tl{l}_{P}(\calS_{\nu},\Delta_{F};u^2),
\end{align*}
where in the sum $\Sigma$ of the right hand side the face $F$ ranges through the admissible compact ones of $\Ntph$ (see Definition~\ref{admissible}).
For the definition of the polynomials $l^{*}_{\lambda}(\Delta_{F},\nu;u)$ and $\tl{l}_{P}(\calS_{\nu},\Delta_{F};u)$, see Section~\ref{KaSpoly}.
\end{cor}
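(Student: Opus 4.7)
The plan is to combine the concentration result (Theorem~\ref{intromain1}) with the coincidence of weight and monodromy filtrations (Theorem~\ref{intromain2}), and then to feed them into the explicit Newton-polyhedron formula for $E_\lambda(F_{f,0};u,v)$ provided by Corollary~\ref{epolycomp1}.

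First, by Theorem~\ref{intromain1}, for $\lambda\notin R_f$ the reduced cohomology groups $\tl{H}^j(F_{f,0};\CC)_\lambda$ vanish for $j\neq n-1$. Hence only the middle cohomology contributes to the $\lambda$-equivariant $E$-polynomial and we have
\[(-1)^{n-1}E_\lambda(F_{f,0};u,v)=\sum_{p,q\in\ZZ}h^{p,q}_\lambda(H^{n-1}(F_{f,0};\CC))u^pv^q.\]
In particular, the specialization $v=u$ delivers, up to the sign $(-1)^{n-1}$, the Poincar\'e polynomial of the weight filtration on $H^{n-1}(F_{f,0};\CC)_\lambda$.

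Next, by Theorem~\ref{intromain2}, this weight filtration coincides with the monodromy filtration of $\Phi_{n-1}$ centered at $n-1$. The $\mathfrak{sl}_2$-representation theory underlying the monodromy filtration then implies that a Jordan block of size $m$ for the eigenvalue $\lambda$ contributes exactly one dimension to each of the graded pieces $\GR^W_{n-1+r}H^{n-1}(F_{f,0};\CC)_\lambda$ for $r\in\{-(m-1),-(m-3),\dots,m-1\}$, and hence
\[J_{m,\lambda}=\dim\GR^W_{n+m-2}H^{n-1}(F_{f,0};\CC)_\lambda-\dim\GR^W_{n+m}H^{n-1}(F_{f,0};\CC)_\lambda\qquad(m\geq 1).\]
Thus all Jordan block multiplicities are recovered from the coefficients of $(-1)^{n-1}E_\lambda(F_{f,0};u,u)$ via a telescoping difference of neighbouring graded dimensions.

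Now I would substitute the explicit formula from Corollary~\ref{epolycomp1}, which writes $E_\lambda(F_{f,0};u,v)$ as a sum over admissible compact faces $F\prec\Ntph$ of products involving $l^*_\lambda(\Delta_F,\nu;uv)$ and $\tl{l}_P(\calS_\nu,\Delta_F;\cdot)$, specialize $v=u$, and apply the above telescoping. I expect that the telescoping causes $l^*_\lambda(\Delta_F,\nu;u^2)$ to collapse to the scalar $l^*_\lambda(\Delta_F,\nu;1)$, while $\tl{l}_P(\calS_\nu,\Delta_F;u^2)$ survives unchanged; and that the dimension $d=\dim F$ of the face matches the Jordan block size $n-d$ through the centering by $n-1$ of the monodromy filtration, producing the prefactor $u^{\dim F+2}$.

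The main obstacle will be the combinatorial accounting in this last step: verifying that the differencing operation used to extract $J_{m,\lambda}$ from the weight-graded Poincar\'e polynomial commutes, face by face, with the decomposition afforded by Corollary~\ref{epolycomp1}, and that the reciprocity and symmetry identities for the polynomials $l^*_\lambda$ and $\tl{l}_P$ (cf.~Section~\ref{KaSpoly}) align precisely so as to leave $l^*_\lambda(\Delta_F,\nu;1)$ as the coefficient. Confirming that this works uniformly across all admissible faces---and in particular explaining why the non-admissible ones drop out after cancellation---is where the bulk of the combinatorial work will reside.
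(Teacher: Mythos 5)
Your first two steps are exactly right and agree with the paper: by Theorem~\ref{intromain1} only $H^{n-1}(F_{f,0};\CC)_{\lambda}$ contributes, so $(-1)^{n-1}E_{\lambda}(F_{f,0};u,u)$ is the weight-graded Poincar\'e polynomial of that space, and by Theorem~\ref{intromain2} together with the standard differencing for a monodromy filtration centered at $n-1$ one has $J_{m,\lambda}=\dim\GR^{W}_{n+m-2}H^{n-1}(F_{f,0};\CC)_{\lambda}-\dim\GR^{W}_{n+m}H^{n-1}(F_{f,0};\CC)_{\lambda}$. The gap is in the third step, which is the combinatorial heart of the statement and which you leave as an expectation that is, moreover, partly backwards. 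Corollary~\ref{epolycomp1} does not express $E_{\lambda}$ through $l^{*}_{\lambda}$ and $\tl{l}_{P}$ over admissible faces: it is a sum over \emph{all} compact faces of terms built from the (non-local) polynomials $h^{*}_{\lambda}(F,0_{F};u,v)$ and $h^{*}_{\lambda}(\Delta_{F},\nu_{F};u,v)$ with powers of $(1-uv)$. The input actually needed is Proposition~\ref{propsymEpoly}, namely $uvE_{\lambda}(F_{f,0};u,v)=(-1)^{n-1}l^{*}_{\lambda}(P,\nu;u,v)$ for $\lambda\notin R_{f}$, followed by the definition of the local weighted limit mixed $h^{*}$-polynomial: $l^{*}_{\lambda}(P,\nu;u,v)=\sum_{F\in\calS_{\nu}}v^{\dim F+1}l^{*}_{\lambda}(F,\nu|_{F};uv^{-1})\,l_{P}(\calS_{\nu},F;uv)$. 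Setting $v=u$ makes the first factor $l^{*}_{\lambda}(\Delta_{F},\nu;1)$ simply because it is evaluated at $uv^{-1}=1$ --- this scalar does not come from any telescoping --- and the hypothesis $\lambda\notin R_{f}$ is what removes the cells other than the $\Delta_{F}$ with $F$ admissible already at this stage, not ``after cancellation'' in the differencing.

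Consequently the polynomial you must feed into the Jordan-block differencing is the Katz--Stapledon local $h$-polynomial $l_{P}(\calS_{\nu},\Delta_{F};u^{2})$, not $\tl{l}_{P}$; the passage from $l_{P}$ to $\tl{l}_{P}$ is precisely the effect of the telescoping, which you have attached to the wrong factor. Moreover, for the differencing to work face by face you need the nontrivial fact (Katz--Stapledon) that $l_{P}(\calS_{\nu},\Delta_{F};t)$ is symmetric and unimodal centered at $(n-1-\dim F)/2$, so that it can be written as $\sum_{i}\tl{l}_{F,i}\,(t^{i}+t^{i+1}+\dots+t^{n-1-\dim F-i})$ with $\tl{l}_{F,i}\geq 0$; after multiplying by $u^{\dim F+2}$ and setting $t=u^{2}$, each such symmetric string reproduces exactly the weight profile, centered at $n-1$, of a Jordan block of size $n-\dim F-2i$, and this is what converts $l_{P}$ into $\tl{l}_{P}(\calS_{\nu},\Delta_{F};u^{2})$ and yields the stated identity. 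Without invoking Proposition~\ref{propsymEpoly} and this symmetry/unimodality, the ``combinatorial accounting'' you defer cannot be completed as described, so the proof as proposed does not go through.
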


We also apply our results to obtain a formula for the Hodge spectrum of the Milnor fiber $F_{f,0}$ (see Corollary~\ref{spfor}).

\subsection*{Acknowledgments}
The author would like to express his profound gratitude to Professor Kiyoshi Takeuchi
for the helpful suggestions and sincere encouragement during this work.
He would also like to express his sincere thanks to Professor Claude Sabbah for answering many questions and the helpful
comments and discussions, which led to an improvement of the proof of Theorem~\ref{main2}, present in Remark~\ref{newproof}.
He wishes to thank Yuichi Ike for useful discussions.
He also thanks the referee for useful comments.
This work was supported by JSPS KAKENHI Grant Number 17J00480.

\section{Milnor fibration}\label{chapmil} 
Let $f(x)\in\nPoly$ be a non-constant polynomial of $n$ variables with coefficients in $\CC$ such that $f(0)=0$.
For a natural number $m\geq 1$ and a positive real number $r>0$,
we denote by $B(0,r)$ the open ball in $\CC^m$ centered at the origin $0$ with radius $r$.
Set $B(0,r)^*:=B(0,r)\setminus \{0\}$.

\begin{thm}[Milnor~\cite{MIL}]\label{milfib}
Fix a sufficiently small $\epsilon>0$.
Then for a sufficiently small $(\epsilon \gg)\eta>0$ the restriction of $f\colon \CC^n\lr \CC$
\[f\colon B(0,\epsilon)\cap f^{-1}(B(0,\eta)^{*})\lr B(0,\eta)^*\]
is a locally trivial fibration.
Moreover, if the origin $0\in\CC^n$ is an isolated singular point of $V:=f^{-1}(0)\subset \CC^n$,
its fiber is homotopy equivalent to a bouquet (wedge sum) of some $(n-1)$-dimensional spheres.
\end{thm}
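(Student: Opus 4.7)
The plan is to prove the fibration statement via Ehresmann's theorem after setting up the right transversality, and then to deduce the bouquet statement via the Andreotti--Frankel theorem combined with a Morse-theoretic connectivity argument.

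\textbf{Fibration part.} First I would establish that there exists $\epsilon_{0}>0$ such that for every $0<\epsilon\leq \epsilon_{0}$ the sphere $S_{\epsilon}:=\partial B(0,\epsilon)$ meets $V$ transversally in a Whitney-stratified sense. The key input is the curve selection lemma: applied to the real-analytic function $\rho(x)=|x|^{2}$ restricted to the strata of a Whitney stratification of $V$ near $0$, it rules out critical values of $\rho$ accumulating at $0$, so all sufficiently small positive radii are regular. Next, with $\epsilon$ fixed, a compactness argument together with the Whitney conditions produces $\eta>0$, $\eta\ll \epsilon$, such that on the compact set $\overline{B(0,\epsilon)}\cap f^{-1}(\overline{B(0,\eta)})$ the restriction of $f$ to $S_{\epsilon}\cap f^{-1}(B(0,\eta)^{*})$ is a submersion onto $B(0,\eta)^{*}$. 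Then $f\colon \overline{B(0,\epsilon)}\cap f^{-1}(B(0,\eta)^{*})\to B(0,\eta)^{*}$ is a proper submersion from a manifold with boundary onto $B(0,\eta)^{*}$, and Ehresmann's fibration theorem yields the asserted local triviality (passing to the interior $B(0,\epsilon)$ preserves it).

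\textbf{Bouquet part.} Now assume that $0$ is an isolated singular point of $V$. Then for small $|t|\neq 0$ the Milnor fiber $F:=f^{-1}(t)\cap B(0,\epsilon)$ is a smooth complex submanifold of complex dimension $n-1$, and it can be realized as a closed complex submanifold of a Stein open subset of $\CC^{n}$ (since level sets of a holomorphic function on a Stein manifold are Stein). By the Andreotti--Frankel theorem, a Stein manifold of complex dimension $n-1$ has the homotopy type of a CW complex of real dimension at most $n-1$, so $F$ has no cells above dimension $n-1$.

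To finish one must verify that $F$ is $(n-2)$-connected. The plan is Morse-theoretic: after a small generic perturbation, the restriction of a suitable real-valued function such as $|f|^{2}$ or the real part of $f$ to $B(0,\epsilon)$ becomes a Morse function whose critical points in the relevant region come from the isolated singularity at $0$ and have index exactly $n-1$ (this is the standard complex Morse lemma applied to the complexification, which forces a middle index). Building $F$ by attaching only handles of index $n-1$ shows $\pi_{i}(F)=0$ for $i<n-1$. Combined with the dimension bound above, Whitehead's theorem gives that $F$ is homotopy equivalent to a bouquet of $(n-1)$-spheres, the number of spheres being the Milnor number.

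\textbf{Main obstacle.} The most delicate step is the transversality input used for both the fibration and the Morse argument: one must control the intersection of arbitrarily small spheres with $V$ and with nearby level sets of $f$, and this rests entirely on the curve selection lemma. The second delicate point, confined to the isolated-singularity case, is producing after perturbation exactly the right number of critical points all of index $n-1$ and all contained in $B(0,\epsilon)$; this is precisely where the isolated-singularity hypothesis is indispensable, since in the non-isolated case the critical locus is positive-dimensional and no such Morse picture survives.
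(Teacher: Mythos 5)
The paper offers no proof of this statement---it is Milnor's classical theorem, quoted with a citation to \cite{MIL}---so your proposal has to be measured against the standard arguments. Your fibration half is essentially the modern ``Milnor tube'' proof and is sound as a sketch: the curve selection lemma shows small spheres meet a Whitney stratification of $V$ transversally and also that $f$ has no critical values other than $0$ near the origin, Whitney regularity plus compactness then give transversality of the nearby fibers $f^{-1}(t)$ to $S_{\epsilon}=\partial B(0,\epsilon)$ for $0<|t|\ll\epsilon$, and Ehresmann's theorem for proper submersions of manifolds with boundary yields local triviality. (This differs from Milnor's original construction of the fibration $f/|f|$ on $S_{\epsilon}\setminus K$ by integrating explicit vector fields, but it is a perfectly standard route. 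One small point: to get a \emph{finite} CW model, and later finite rank homology, you should work with the compact fiber $f^{-1}(t)\cap\overline{B(0,\epsilon)}$, which the same transversality shows is a compact manifold with boundary whose interior is the open fiber.)

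The genuine gap is the $(n-2)$-connectivity step, which is the heart of the bouquet statement, and the mechanism you propose does not work as written. Neither of the functions you name can have Morse critical points of index exactly $n-1$ ``forced by the complex Morse lemma'': for a nondegenerate critical point of a holomorphic function, $\mathrm{Re}\,f=\sum(x_i^2-y_i^2)$ in suitable coordinates has real index exactly $n$ (not $n-1$) inside the $2n$-dimensional ball, while $|f|^2$ is nowhere Morse near $V$ since all of $V$ is a minimum locus. What the complex Morse picture actually gives, after replacing $f$ by a Morsification with $\mu$ nondegenerate critical points, is that the contractible tube $B(0,\epsilon)\cap f^{-1}(B(0,\eta))$ is obtained from a collar on the fiber by attaching one $n$-handle (a Lefschetz thimble) per critical point; to convert this into ``$F$ is obtained from a contractible space by attaching $(n-1)$-cells,'' which is what yields $(n-2)$-connectivity and the wedge structure, you must argue backwards---via the long exact sequence of the pair (tube, fiber) together with simple connectivity of $F$ for $n\geq 3$, or via the dual handle decomposition---and you must also justify that Morsifying $f$ does not change the Milnor fiber up to diffeomorphism, since the radii $\epsilon,\eta$ were chosen for $f$ itself. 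None of this appears in your sketch, and it is precisely the content of the theorem; by contrast Milnor's own proof avoids perturbation altogether, obtaining the connectivity from a Morse-theoretic and duality analysis of the fibers inside $S_{\epsilon}\setminus K$ combined with the homotopy-dimension bound you quote. As it stands, your argument establishes the bound ``no cells above dimension $n-1$'' (Andreotti--Frankel) but merely asserts the complementary connectivity, so the bouquet conclusion is not yet proved.
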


This locally trivial fibration does not depend on the choices of sufficiently small $\epsilon$ and $\delta$ up to diffeomorphism (see section 1.2 of \cite{OkNoCo}) and is called the \textit{Milnor fibration} of $f$ at $0$, and its general fiber $F_{f,0} $ is called the \textit{Milnor fiber} of $f$ at $0$.
By the above theorem, 
if $0$ is an isolated singular point of $V$,
it follows that the reduced cohomology group $\tl{H}^{j}(F_{f,0};\CC)$ vanishes for $j\neq n-1$.
We get an action of the fundamental group $\pi_{1}(B(0,\eta)^*)$ on $H^{j}(F_{f,0};\CC)$, and thus can define an automorphism
\[\Phi_{j}\colon H^{j}(F_{f,0};\CC)\simar H^{j}(F_{f,0};\CC)\]
for each $j\in\ZZ$.
We call it the \textit{$j$-th Milnor monodromy} of $f$ at $0$.
It is well-known that $\Phi_{j}$ is a quasi-unipotent linear operator for any $j\in\ZZ$.
For $\lambda\in\CC$ and $j\in\ZZ$ denote by $H^{j}(F_{f,0};\CC)_{\lambda}$ the generalized eigenspace of $\Phi_{j}$ for the eigenvalue $\lambda$.
Let $\Phi_{j,\lambda}$ be the restriction of $\Phi_{j}$ to $H^{j}(F_{f,0};\CC)_{\lambda}$.
To study the eigenvalues of the Milnor monodromies $\Phi_{j}$ and their multiplicities,
we introduce the following rational function.

\begin{defi}
In the situation as above, we define the \textit{monodromy zeta function $\zeta_{f,0}(t)\in\CC(t)$ of $f$ at $0$} by
\[\zeta_{f,0}(t):= \prod_{j\in\ZZ}\det(\Id- t\Phi_{j})^{(-1)^{j}}\in\CC(t),\]
where $\Id$ is the identity map of $H^{j}(F_{f,0};\CC)$ to itself.
\end{defi}

Since $\Phi_{j}$ are automorphisms,
the polynomials $\det(\Id- t\Phi_{j})$ determine the characteristic polynomials of $\Phi_{j}$.
To introduce a formula for the monodromy zeta functions of Varchenko~\cite{Varchenko},
we prepare some notions.

\begin{defi}\label{newtontpoly}
Let $f(x)=\sum_{\alpha\in\ZZ^n}a_{\alpha}x^{\alpha}\in\CC[x_{1}^{\pm},\dots,x_{n}^{\pm}]$ be a Laurent polynomial with coefficients in $\CC$.
Then the \textit{Newton polytope} $\Ntpo\subset \RR^n$ of $f$ is the convex hull of
the set $\supp(f):=\{\alpha\in\ZZ^n\ |\ a_{\alpha}\neq 0\}\subset \RR^n$ in $\RR^n$. 
\end{defi}

\begin{defi}\label{newtonph}
Let $f(x)=\sum_{\alpha\in\ZZpn}a_{\alpha}x^{\alpha}\in\nPoly$ be a polynomial with coefficients in $\CC$ such that $f(0)=0$.
\begin{enumerate}
\item The \textit{Newton polyhedron} $\Gamma_{+}(f)\subset \RR^n$ of $f$ at the origin $0\in\CC^n$ is the convex hull of $\bigcup_{\alpha\in\supp(f)}\{\alpha +\RRpn\}\subset \RR^n$ in $\RR^n$.
\item The \textit{Newton boundary} $\Gamma_{f}\subset \Gamma_{+}(f)$ of $f$ is 
the union of the compact faces of $\Ntph$.
\item We say that the polynomial $f$ is \textit{convenient} if $\Gamma_{+}(f)$ intersects the positive part of each coordinate axis of $\RR^n$.
\end{enumerate}
\end{defi}

For a Laurent polynomial $f(x)=\sum_{\alpha\in\ZZ^n}a_{\alpha}x^{\alpha}$
and a polytope $F$ in $\RR^n$,
we set $f_{F}(x):=\sum_{\alpha\in F}a_{\alpha}x^{\alpha}$.
\begin{defi}\label{nondeg}
Let $f(x)\in\CC[x_{1}^{\pm},\dots,x_{n}^{\pm}]$ be a Laurent polynomial with coefficients in $\CC$.
We say that $f$ is \textit{non-degenerate} if
for any face $F$ of $\Ntpo$
the hypersurface $\{x\in\CS^n\ |\ f_{F}(x)=0\}$ in $\CS^n$ is smooth and reduced.
\end{defi}

\begin{defi}\label{nondegatzero}
Let $f(x)\in\nPoly$ be a non-constant polynomial with coefficients in $\CC$ such that $f(0)=0$.
Then we say that $f$ is \textit{non-degenerate at $0$} if
for any compact face $F$ of $\Ntph$ the hypersurface $\{x\in\CS^n\ |\ f_{F}(x)=0\}$ in $\CS^n$ is smooth and reduced.
\end{defi}

Let $F$ be a lattice polytope in $\RR^n$ (i.e. its vertices are in $\ZZ^n$).
We denote by $\mathbf{L}_{F}$ (resp. $\Aff{F}$) the minimal linear (resp. affine) subspace of $\RR^n$ which contains $F$
and set $M_{F}:=\mathbf{L}_{F}\cap \ZZ^n$.
Note that $M_{F}$ is a lattice (i.e. a finite rank free $\ZZ$-module)
and we have $M_{F}\otimes_{\ZZ}\RR\simeq \mathbf{L}_{F}$.
Assume that $\dim{F}<n$ and $0\notin \Aff{F}$.
In this case there exists a unique primitive vector $v_{F}$ in the dual lattice $M_{F}^{*}$ of $M_{F}$ which takes a positive constant value on $\Aff{F}$.

\begin{defi}\label{latdist}
We define the \textit{lattice distance} $d_{F}\in\ZZ_{\geq 1}$ of $F$ from the origin $0\in\mathbf{L}_{F}$ to be the value of $v_{F}$ on $\Aff{F}$. 
\end{defi}

Let $f(x)\in\nPoly$ be a non-constant polynomial with coefficients in $\CC$ such that $f(0)=0$.
For a subset $I\subset \{1,\dots,n\}$, set 
\[
\RR^{I}:=\{(x_{1},\dots,x_{n})\in\RR^n\ |\ x_{i}=0\ (i\notin I)\}\simeq \RR^{|I|},\]
and let $\Gamma_{I,1},\dots,\Gamma_{I,k_{I}}$ be the $(|I|-1)$-dimensional compact faces of $\RR^{I}\cap \Ntph$.
For $1\leq i\leq k_{I}$, we define an integer $d_{I,i}\in\ZZ_{>0}$ to be the lattice distance of $\Gamma_{I,i}$ from the origin $0\in\RR^I$.
Let $\Vol_{\ZZ}(\Gamma_{I,i})\in\ZZ_{>0}$ be the $(|I|-1)$-dimensional normalized volume of $\Gamma_{I,i}$.
Then we have the following.

\begin{thm}[Varchenko~\cite{Varchenko}]\label{Varform}
Assume that $f$ is non-degenerate at $0$.
Then we have
\begin{align}\label{Varf}
\zeta_{f,0}(t)=\prod_{\emptyset\neq I\subset \{1,\dots,n\}}\prod_{i=1}^{k_{I}}(1-t^{d_{I,i}})^{(-1)^{|I|-1}\mathrm{Vol}_{\ZZ}(\Gamma_{I,i})}.
\end{align}
\end{thm}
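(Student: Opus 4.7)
The plan is to prove Varchenko's formula by combining a toric modification of $\CC^n$ adapted to $\Gamma_{+}(f)$ with A'Campo's formula for the monodromy zeta function in terms of the components of the total transform of $V$.

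First, I would choose a smooth subdivision $\Sigma$ of the dual fan of $\Gamma_{+}(f)$ that refines the standard fan of $\CC^n$, and take the associated toric modification $\pi\colon X_{\Sigma}\lr \CC^n$. Each ray $\rho\in\Sigma(1)$ is generated by a primitive vector $v_{\rho}\in\ZZpn$, and gives a toric divisor $E_{\rho}\subset X_{\Sigma}$. The order of $f\circ\pi$ along $E_{\rho}$ equals the support-function value $m_{\rho}:=\min_{\alpha\in\Gamma_{+}(f)}\inpro{v_{\rho}}{\alpha}$, and the non-degeneracy of $f$ at $0$ guarantees that the strict transform $\tl{V}$ is smooth near $\pi^{-1}(0)$ and crosses the exceptional toric divisors transversally. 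Thus $\pi$ resolves $f$ in a neighborhood of $0$.

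Next, I would apply A'Campo's formula in the form
\[
\zeta_{f,0}(t)=\prod_{\rho\in\Sigma(1)}\bigl(1-t^{m_{\rho}}\bigr)^{-\chi(E_{\rho}^{\circ}\cap\pi^{-1}(0))},
\]
where $E_{\rho}^{\circ}$ denotes the complement of the other components of $(f\circ\pi)^{-1}(0)$ in $E_{\rho}$. The set $E_{\rho}^{\circ}\cap\pi^{-1}(0)$ is built out of intersections of torus orbits with $\pi^{-1}(0)$, and can be described purely combinatorially from $\Sigma$ and $\Gamma_{+}(f)$. In particular, only rays $\rho$ for which the face $F_{\rho}:=\{\alpha\in\Gamma_{+}(f):\inpro{v_{\rho}}{\alpha}=m_{\rho}\}$ is compact and of dimension $|\supp(v_{\rho})|-1$ can give a non-vanishing Euler characteristic; in the remaining cases the stratum is fibered by positive-dimensional tori and $\chi=0$. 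For the surviving rays, $v_{\rho}$ is the primitive inner normal to some $(|I|-1)$-dimensional compact face $\Gamma_{I,i}\subset \RR^{I}\cap\Gamma_{+}(f)$, $m_{\rho}$ equals the lattice distance $d_{I,i}$, and a direct toric computation identifies the Euler characteristic of $E_{\rho}^{\circ}\cap\pi^{-1}(0)$ with $(-1)^{|I|}\Vol_{\ZZ}(\Gamma_{I,i})$ (the normalized volume appears as the degree of the toric map cut out by $f_{\Gamma_{I,i}}$).

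Finally, I would organize the product above according to the pair $(I,i)$: several rays $\rho$ of $\Sigma$ can be normal to the same $\Gamma_{I,i}$ (those lying in the relative interior of the dual cone to $\Gamma_{I,i}$), and one must check that the contributions of all such $\rho$ collapse to the single factor $(1-t^{d_{I,i}})^{(-1)^{|I|-1}\Vol_{\ZZ}(\Gamma_{I,i})}$. This follows from additivity of $\chi$ over the stratification by the cones of $\Sigma$ together with the fact that the union of the strata for rays in the relative interior of a fixed dual cone is, up to a torus factor that makes no contribution, just the affine stratum whose Euler characteristic one can compute directly; in particular, the answer is independent of the chosen subdivision $\Sigma$.

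The main obstacle I anticipate is precisely this last bookkeeping step: carefully verifying the $\chi=0$ vanishing for irrelevant rays and the collapse of the contributions of all rays normal to the same face into a single factor with the correct exponent. The toric computation of $\chi(E_{\rho}^{\circ}\cap\pi^{-1}(0))$ requires inclusion–exclusion over the other components, which must then be combined across all $\rho$ with a common inner normal face. Once this combinatorial argument is carried out, matching the outcome with the stated product over $(I,i)$ is routine.
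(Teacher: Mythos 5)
The paper does not prove Theorem~\ref{Varform}; it is cited from Varchenko and used as a black box. Your outline follows what is essentially the standard modern toric proof of Varchenko's theorem (as in Oka's treatment and as implicit in Varchenko's original work): take a smooth subdivision of the dual fan, apply A'Campo's formula on the resulting toric log resolution, and reduce the Euler characteristics of the strata to normalized volumes via Bernstein--Khovanskii--Kushnirenko. So the approach is sound in spirit and is the usual one. However, as you yourself anticipate, the key combinatorial step is not merely routine bookkeeping, and your sketch glosses over two points where it would actually break down as written.

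First, you define $F_{\rho}:=\{\alpha\in\Gamma_{+}(f):\inpro{v_{\rho}}{\alpha}=m_{\rho}\}$ as a face of $\Gamma_{+}(f)\subset\RR^n$ and then restrict to rays for which $F_{\rho}$ is \emph{compact} of dimension $|\supp(v_{\rho})|-1$. But if $I:=\supp(v_{\rho})\subsetneq\{1,\dots,n\}$, then $F_{\rho}$ is automatically noncompact: for any $\alpha\in F_{\rho}$ and $j\notin I$ one has $\alpha+e_{j}\in F_{\rho}$ since $v_{\rho,j}=0$. So your stated condition forces $I=\{1,\dots,n\}$, and you would only recover the factors with $|I|=n$ in (\ref{Varf}) --- the other factors $\Gamma_{I,i}$ with $I\subsetneq\{1,\dots,n\}$ would be lost. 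What you actually need is the compact facet $F_{\rho}\cap\RR^{I}$ of $\RR^{I}\cap\Gamma_{+}(f)$; this repair is necessary, but it brings with it the second problem.

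Second, once $I\subsetneq\{1,\dots,n\}$, the open orbit $T_{\rho}\cong(\CC^*)^{n-1}$ has image $\pi(T_{\rho})=\{x:x_{i}=0\ (i\in I),\ x_{j}\neq 0\ (j\notin I)\}$, so $T_{\rho}\cap\pi^{-1}(0)=\emptyset$. Hence the whole of $E_{\rho}^{\circ}\cap\pi^{-1}(0)$ lies in the boundary strata $T_{\sigma}$ for cones $\sigma\succ\rho$ with $\Relint\sigma\subset\Int\RRpn$ and whose other rays all have multiplicity $0$. Computing the Euler characteristic of this union --- and showing that after the ``collapse'' over all rays normal to the same $\Gamma_{I,i}$ the answer is $(-1)^{|I|-1}\Vol_{\ZZ}(\Gamma_{I,i})$ --- is the actual content of the proof and needs a genuine inclusion--exclusion argument, not just the BKK count you invoke for the $I=\{1,\dots,n\}$ case. (There is also a pair of compensating sign slips: with the paper's convention $\zeta_{f,0}(t)=\prod_{j}\det(\Id-t\Phi_{j})^{(-1)^{j}}$, A'Campo's formula carries exponent $+\chi$, not $-\chi$, and correspondingly $\chi(E_{\rho}^{\circ}\cap\pi^{-1}(0))$ should be $(-1)^{|I|-1}\Vol_{\ZZ}$, not $(-1)^{|I|}\Vol_{\ZZ}$. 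The two errors cancel but both intermediate statements are wrong as written.)

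In short: your plan is the right one and is the same plan Varchenko's theorem is usually proved by, but the two points above are not merely ``bookkeeping'' --- they are where the theorem actually lives, and your outline as stated would prove only the $|I|=n$ part of the formula.
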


The monodromy zeta function $\zeta_{f,0}(t)$ being an alternating product of 
the polynomials $\det(\Id- t\Phi_{j})$,
we can not compute the eigenvalues of each Milnor monodromy $\Phi_{j}$ and their multiplicities by Varchenko's formula in general.
Recall that if $0\in V$ is an isolated singular point of $V$
we have $H^{j}(F_{f,0};\CC)=0$ for $j\neq 0,n-1$ and
$\det(\Id- t\Phi_{0})=1-t$ (here we assumed $n\geq 2$).
In this case we thus obtain
\[\zeta_{f,0}(t)=(1-t)\cdot\det(\Id-t\Phi_{n-1})^{(-1)^{n-1}}\]
and can compute the eigenvalues of $\Phi_{n-1}$ and their multiplicities by Varchenko's formula.
Let us recall a well-known condition for the origin $0\in V$ to be an isolated singular point.
\begin{prop}
If $f$ is convenient and non-degenerate at $0$, then the origin $0$ is a smooth or an isolated singular point of $V$.
\end{prop}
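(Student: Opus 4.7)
The approach is to show that under the hypotheses the singular locus of $V$ near the origin is reduced to $\{0\}$; since $f(0)=0$ gives $0\in V$, this then yields the dichotomy according to whether $df(0)$ vanishes (smooth if $df(0)\neq 0$, isolated singular otherwise). I argue by contradiction: suppose there is a sequence of singular points of $V$ accumulating at $0$, and invoke the curve selection lemma to pick an analytic arc $\gamma\colon[0,\epsilon)\to V\cap\{df=0\}$ with $\gamma(0)=0$ and $\gamma(t)\neq 0$ for $t>0$.

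The first step is a reduction to the case where every component of $\gamma$ is nontrivial. Let $I:=\{i\ |\ \gamma_{i}\not\equiv 0\}$; the arc lies in the coordinate stratum $T_{I}:=\{x\in\CC^n\ |\ x_{i}\neq 0 \Leftrightarrow i\in I\}$ for small $t>0$. If $I\subsetneq\{1,\dots,n\}$, restrict to $f^{I}:=f|_{\CC^I}$, whose Newton polyhedron at $0$ is $\Ntph\cap\RR^{I}$. Both hypotheses descend to $f^{I}$: convenience descends because each axis intersection $k_{i}e_{i}\in\Ntph$ with $i\in I$ already lies in $\RR^{I}$; non-degeneracy descends because a compact face $F$ of $\Ntph\cap\RR^{I}$ is also a compact face of $\Ntph$ contained in $\RR^{I}$ (obtained by perturbing a supporting functional by a large positive vector in the directions $j\notin I$), and on such a face $f_{F}$ involves only the coordinates $x_{i}$, $i\in I$. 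Since $\partial_{i}f(\gamma(t))=\partial_{i}f^{I}(\gamma(t))$ for $i\in I$ and $\gamma(t)\in T_{I}$, the curve $\gamma$ lies in the singular locus of $\{f^{I}=0\}\subset\CC^I$, and induction on $n$ (with trivial base $n=1$, where $f=x^{k}\cdot(\text{unit})$ near $0$) yields a contradiction.

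In the remaining case $I=\{1,\dots,n\}$, apply Puiseux expansion to write $\gamma_{i}(t)=t^{a_{i}}c_{i}(t)$ with $a_{i}\in\QQ_{>0}$ and $c_{i}(0)\neq 0$. Let $F\subset\Ntph$ be the face where $\inpro{a}{\cdot}$ attains its minimum $d$ on $\Ntph$; since $a\in\QQ_{>0}^{n}$, this face $F$ is compact. Expanding in powers of $t$,
\[f(\gamma(t))=t^{d}f_{F}(c(0))+O(t^{d+\delta}),\qquad \partial_{i}f(\gamma(t))=t^{d-a_{i}}(\partial_{i}f_{F})(c(0))+O(t^{d-a_{i}+\delta})\]
for some $\delta>0$, because any $\alpha\in\supp(f)\setminus F$ satisfies $\inpro{a}{\alpha}>d$. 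The identities $f(\gamma(t))\equiv 0$ and $\partial_{i}f(\gamma(t))\equiv 0$ force $f_{F}(c(0))=0$ and $\partial_{i}f_{F}(c(0))=0$ for all $i$, exhibiting $c(0)\in\CS^n$ as a singular point of $\{f_{F}=0\}$ in $\CS^n$ and contradicting non-degeneracy on the compact face $F$. The main obstacle is precisely this leading-order analysis: one must rule out cancellations among the contributions indexed by $\alpha\in F$ (so the coefficient of $t^{d}$ is literally $f_{F}(c(0))$ rather than some lower-order fragment) and also confirm that contributions from $\alpha\notin F$ are of strictly higher order. Both follow from the defining minimizing property of $F$ together with $c_{i}(0)\neq 0$, and once they are in hand the contradiction is immediate.
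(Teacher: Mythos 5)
Your proof is correct, and it is the standard argument going back to Kouchnirenko: curve selection lemma, reduction to a coordinate subspace (using convenience), and leading-order Newton-polyhedron analysis on the torus. Since the paper itself merely states the proposition as a well-known fact with no proof, there is nothing in the paper to compare against; your argument is precisely what one would expect. Two small cosmetic remarks: the curve selection lemma already yields a real-analytic arc, so the exponents $a_i$ are positive integers and no Puiseux expansion is actually needed; and the cleanest way to see that compact faces of $\Gamma_+(f)\cap\RR^I$ are compact faces of $\Gamma_+(f)$ is to observe that each hyperplane $\{x_j=0\}$ with $j\notin I$ is a supporting hyperplane of $\Gamma_+(f)\subset\RR^n_{\geq 0}$, so that $\Gamma_+(f)\cap\RR^I$ is itself a face of $\Gamma_+(f)$ and hence its faces are faces of $\Gamma_+(f)$ contained in $\RR^I$; your perturbation argument reaches the same conclusion but is slightly more delicate to make rigorous.
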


Therefore, if $f$ is convenient and non-degenerate at $0$,
we can describe the eigenvalues of the Milnor monodromy $\Phi_{n-1}$ and their multiplicities by the Newton polyhedron $\Ntph$.
In Section~\ref{chapmain}, 
we will show that even if $f$ is not convenient, we can compute the multiplicities of some ``good'' eigenvalues of $\Phi_{n-1}$ by Varchenko's formula (see Corollary~\ref{multico}).

Recall that the cohomology group $H^{j}(F_{f,0};\QQ)$ of the Milnor fiber $F_{f,0}$ is endowed with a mixed Hodge structure $(H^{j}(F_{f,0};\QQ), F^{\bullet}, W_{\bullet})$ defined by Steenbrink~\cite{STEVAN} in the case where $0\in V$ is isolated singular point, by Navarro~\cite{Navsur} and M.Saito~\cite{MHM} in the case where $0\in V$ is a non-isolated singular point.
To introduce a property of the weight filtration $W_{\bullet}$, we recall the following notion.
\begin{defi}
Let $r\in\ZZ_{\geq 1}$ and $N$ be a nilpotent endomorphism of a finite dimensional $\CC$-vector space $H$ such that $N^{r+1}=0$.
Then, there exists an increasing filtration $\{W_{k}\}_{k\in\ZZ}$ on $H$, which is uniquely determined by the following conditions:
\begin{enumerate}
\item $W_{-1}=\{0\}$ and $W_{2r}=H$,\\
\item $N(W_{k})\subset W_{k-2}$ \quad and\\
\item $N^{k}\colon W_{r+k}/W_{r+k-1}\simar W_{r-k}/W_{r-k-1}$\mbox{\ \ for $0\leq k\leq r$}.
\end{enumerate}
We call it the \textit{monodromy filtration of $N$ centered at $r$}.
\end{defi}
Note that the number of Jordan blocks (for the eigenvalue $0$) with size $k\in \ZZ_{\geq 1}$ in the Jordan normal form of $N$ is equal to
\[\dim{W_{r+1-k}/W_{r-k}}-\dim{W_{r-1-k}/W_{r-2-k}}.\]
We decompose the $(n-1)$-th Milnor monodromy $\Phi_{n-1}$ into the semisimple part $\Phi_{n-1}^{s}$ and the unipotent part $\Phi_{n-1}^{u}$ as $\Phi_{n-1}=\Phi_{n-1}^{s}\Phi_{n-1}^{u}$.
If $0\in V$ is an isolated singular point,
the filtration on $H^{n-1}(F_{f,0};\CC)_{\lambda}$ induced by the weight filtration is the monodromy filtration of the logarithm $\log{\Phi_{n-1}^{u}}$ of $\Phi_{n-1}^{u}$ centered at $n-1$ (resp. $n$) for $\lambda\neq 1$ (resp. $\lambda=1$) (see \cite{STEVAN}).
Therefore, we can recover the Jordan normal form of $\Phi_{n-1}$ for the eigenvalue $\lambda$ from the dimensions of the graded pieces $\GR^{W}_{k}H^{n-1}(F_{f,0};\CC)_{\lambda}$.
On the other hand, if $0\in V$ is a non-isolated singular point,
we can not expect such a relationship between the weight filtration $W_{\bullet}$ and the Milnor monodromy.
In Section~\ref{chapmain}, we will show that even if $f$ is not convenient (so $0\in V$ may be a non-isolated singular point) for a ``good'' eigenvalue $\lambda$ the filtration on $H^{n-1}(F_{f,0};\CC)_{\lambda}$ induced by $W_{\bullet}$ coincides with the monodromy filtration (see Theorem~\ref{main2}).

Finally, we describe the cohomology groups of the Milnor fibers and their mixed Hodge structures in terms of the nearby cycle functors.
For details, see~\cite{DIMCA}, \cite{HTT}, \cite{KS} and \cite{MHM}. 
For a field $\KK$,
we denote by $\KK_{\CC^n}$ the constant sheaf on $\CC^n$ with stalk $\KK$ and by $\psi_{f}(\KK_{\CC^n})$ the nearby cycle sheaf of $f$.
Recall that $\psi_{f}(\KK_{\CC^n})$ is an object of the derived category of constructible sheaves $\DBC{V}$ on $V=f^{-1}(0)$.
Then, for any $j\in\ZZ$
there exists an isomorphism 
\[H^{j}(\psi_{f}(\KK_{\CC^n})_{0})\simeq H^{j}(F_{f,0};\CC)\]
(see e.g. Proposition~4.2.2 of \cite{DIMCA}).
Moreover, there exists an automorphism of the complex $\psi_{f}(\KK_{\CC^n})$, called the monodromy automorphism.
The automorphism on $H^{j}(\psi_{f}(\KK_{\CC^n})_{0})\simeq H^{j}(F_{f,0};\KK)$ induced by it coincides with the Milnor monodromy $\Phi_{j}$.
For a complex number $\lambda\in\CC$, we denote by $\psi_{f,\lambda}(\CC_{\CC^n})$ the $\lambda$-part of $\psi_{f}(\CC_{\CC^n})$ with respect to the monodromy automorphism.
Then the $j$-th cohomology $H^{j}(\psi_{f,\lambda}(\CC_{\CC^n})_{0})$ is isomorphic to the generalized eigenspace $H^{j}(F_{f,0};\CC)_{\lambda}$ of $\Phi_{j}$ for the eigenvalue $\lambda$.
Recall that the complexes
\[\KK_{\CC^n}[n]\in\DBC{\CC^n}\]
and
\[{}^p\psi_{f}(\KK_{\CC^n}[n]):=\psi_{f}(\KK_{\CC^n}[n-1])\in\DBC{V}\]
are objects of the Abelian categories of perverse sheaves $\Perv{\CC^n}\subset{\DBC{\CC^n}}$ and $\Perv{V}\subset \DBC{V}$ respectively.
Moreover, $\QQ_{\CC^n}[n]$ and ${}^p\psi_{f}(\QQ_{\CC^n}[n])$ are the underlying perverse sheaves of the mixed Hodge modules $\QQ^H_{\CC^n}[n]$ and
$\psi_{f}^{H}(\QQ^{H}_{\CC^n}[n])$ respectively.
Let $j_{0}^{*}\psi_{f}^{H}(\QQ^{H}_{\CC^n})$ be the pull-back of the mixed Hodge module $\psi_{f}^{H}(\QQ^{H}_{\CC^n})\in \DBMHM{V}$ by the inclusion $j_{0}\colon \{0\}\hlr V$.
This is an object of the derived category of mixed Hodge modules $\DBMHM{\{0\}}$ (which is equivalent to the derived category of polarizable mixed Hodge structures),
and its underlying complex is ${}^p\psi_{f}(\QQ_{\CC^n}[n])_{0}(=j_{0}^{-1}({}^p\psi_{f}(\QQ_{\CC^n}[n])))$.
This implies that the cohomology groups of ${}^p\psi_{f}(\QQ_{\CC^n}[n])_{0}$ have mixed Hodge structures.
Thus we can endow $H^{j}(F_{f,0};\QQ)$ with a mixed Hodge structure for each $j\in\ZZ$.
Note that in the case where $0\in V$ is an isolated singular point,
the mixed Hodge structure of $H^{n-1}(F_{f,0};\QQ)$ was defined by Steenbrink~\cite{STEVAN} more elementarily.

\section{Motivic Milnor fibers}\label{chapmot}
\subsection{Motivic Milnor fibers}\label{subsecmot}
Let $\mu_{m}=\{x\in\CC\ |\ x^m=1\}$ be the cyclic group of order $m\in\ZZ_{\geq 1}$.
Let $\muhat=\varprojlim_{m}\mu_{m}$ be the projective limit with respect to the morphisms $\mu_{md}\lr \mu_{m}\ (x\mapsto x^{d})$.
Let $K_{0}^{\muhat}(\mathrm{Var_{\CC}})$ be the Abelian group generated by the symbols ${[X\circlearrowleft\muhat]}$ for algebraic varieties $X$ over $\CC$ with a good $\muhat$-action (i.e. a $\muhat$-action induced by a good $\mu_{m}$-action for some $m\in\ZZ_{\geq 1}$), and divided by some relations (see \cite{DLarc}).
For $[X\circlearrowleft\muhat]$ and $[X'\circlearrowleft\muhat]$ in $K_{0}^{\muhat}(\mathrm{Var_{\CC}})$
we can endow the product $X\times X'$ with a good $\muhat$ action and
define their multiplication $[X\circlearrowleft\muhat]\cdot[X'\circlearrowleft\muhat]$ by
$[X\times X' \circlearrowleft \muhat]$.
In this way, we can endow $K_{0}^{\muhat}(\mathrm{Var_{\CC}})$ with a ring structure and call it the \textit{monodromic Grothendieck ring}.
Set $\LL:=[\CC\circlearrowleft\muhat]\in K_{0}^{\muhat}(\mathrm{Var_{\CC}})$, where $\CC\circlearrowleft\muhat$ is the affine line with the trivial $\muhat$-action.
We denote by $\calM^{\muhat}_{\CC}$ the localization of the ring $K_{0}^{\muhat}(\mathrm{Var_{\CC}})$ obtained by inverting $\LL$.
For a non-constant polynomial $f(x)\in\nPoly$ such that $f(0)=0$,
Denef-Loeser~\cite{DLarc} defined the \textit{motivic Milnor fiber} $\calS_{f,0}$ of $f$ as an object in $\calM_{\CC}^{\muhat}$ by using the theory of arc spaces.
It is an ``incarnation of the Milnor fiber of $f$ at $0$''
in the ring $\calM^{\muhat}_{\CC}$ as we shall see in Theorem~\ref{HOREAL} below.
Let $X$ be an algebraic variety with a good $\mu_{m}$-action for some $m\in\ZZ_{\geq 1}$.
The generator of $\mu_{m}$ defines an automorphism $l\colon X\simar X$ of $X$ such that $l^m$ is the identity map.
Each cohomology group $H^{j}_{c}(X;\QQ)$ with compact support is endowed with Deligne's mixed Hodge structure $(H^{j}_{c}(X;\QQ), F^{\bullet},W_{\bullet})$ with an automorphism 
\begin{align}\label{actionl}
(l^*)^{-1}\colon H^{j}_{c}(X;\QQ)\simar H^{j}_{c}(X;\QQ).
\end{align}
For $\lambda\in\CC$, we denote by 
\[H^{j}_{c}(X;\CC)_{\lambda}\subset H^j_{c}(X;\CC)\]
the generalized eigenspace of the automorphism for the eigenvalue $\lambda$.
Moreover, for $p,q\in\ZZ$ and $\lambda\in\CC$
we define $h^{p,q}_{\lambda}(H^{j}_{c}(X;\CC))\in\ZZ_{\geq 0}$ to be the dimension of
$\GR^{p}_{F}\GR^{W}_{p+q}H^{j}_{c}(X;\CC)_{\lambda}$.
Then for $\lambda\in\CC$ we can define a ring homomorphism $E_{\lambda}(\ \cdot\ ;u,v)$ of $\calM^{\muhat}_{\CC}$ to the polynomial ring $\ZZ[u,v]$
which sends $[X\circlearrowleft\muhat]\in\calM_{\CC}^{\muhat}$ to
the polynomial
\[E_{\lambda}([X\circlearrowleft\muhat];u,v):=\sum_{p,q\in\ZZ}\sum_{j\in\ZZ}(-1)^{j}h^{p,q}_{\lambda}(H^{j}_{c}(X;\CC))u^pv^q\in\ZZ[u,v].\]
For an element $\sum_{i}[X_{i}\circlearrowleft\muhat]\in\calM_{\CC}^{\muhat}$ and $\lambda\in\CC$
we call the polynomial $\sum_{i}E_{\lambda}([X_{i}\circlearrowleft\muhat];u,v)$ the \textit{Hodge realization for $\lambda$} of $\sum_{i}[X_{i}\circlearrowleft\muhat]$. 
For $\lambda\in\CC$ we denote by $E_{\lambda}(F_{f,0};u,v)$ the \textit{equivariant Hodge-Deligne polynomial for the eigenvalue $\lambda$} of the mixed Hodge structures of the cohomology groups of the Milnor fiber $F_{f,0}$ with the automorphisms $\Phi_{j}$, i.e.
\[E_{\lambda}(F_{f,0};u,v):=\sum_{p,q\in\ZZ}\sum_{j\in\ZZ}(-1)^{j}h^{p,q}_{\lambda}(H^{j}(F_{f,0};\CC))u^pv^q\in\ZZ[u,v],\]
where $h^{p,q}_{\lambda}(H^{j}(F_{f,0};\CC))$ is the dimension of 
$\GR^{p}_{F}\GR^{W}_{p+q}H^{j}(F_{f,0};\CC)_{\lambda}$.
Then we have the following theorem of Denef-Loeser~\cite{DLarc}.
\begin{thm}[Denef-Loeser~\cite{DLarc}]\label{HOREAL}
For any $\lambda\in\CC$ we have
\[E_{\lambda}(\calS_{f,0};u,v)=E_{\lambda}(F_{f,0};u,v).\]
\end{thm}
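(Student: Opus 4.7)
The plan is to prove the equality via a common embedded resolution of $(V,0)$. First I would fix a proper birational morphism $\pi\colon Y\to\CC^n$ which is an isomorphism off $V=f^{-1}(0)$ and such that $(f\circ\pi)^{-1}(0)=\sum_{i\in J}N_iE_i$ is a simple normal crossings divisor on $Y$. For $\emptyset\neq I\subset J$, write $E_I^\circ=\bigl(\bigcap_{i\in I}E_i\bigr)\setminus\bigl(\bigcup_{j\notin I}E_j\bigr)$ and $N_I=\gcd\{N_i:i\in I\}$; there is a natural unramified $\mu_{N_I}$-covering $\tl E_I^\circ\to E_I^\circ$ constructed by extracting an $N_I$-th root of a local equation of $f\circ\pi$ on a tubular neighborhood. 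The Denef-Loeser formula, obtained by motivic integration on the arc space of $\CC^n$ together with the change-of-variables formula, then reads
\[\calS_{f,0}=\sum_{\emptyset\neq I\subset J}(1-\LL)^{|I|-1}\bigl[\tl E_I^\circ\cap\pi^{-1}(0)\circlearrowleft\muhat\bigr]\in\calM_{\CC}^{\muhat},\]
which I would take as the starting point.

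Next, since $E_\lambda(\ \cdot\ ;u,v)\colon\calM_\CC^{\muhat}\to\ZZ[u,v]$ is a ring homomorphism satisfying $E_\lambda(\LL;u,v)=uv$, applying it to the identity above expresses $E_\lambda(\calS_{f,0};u,v)$ as
\[\sum_{\emptyset\neq I\subset J}(1-uv)^{|I|-1}E_\lambda\bigl([\tl E_I^\circ\cap\pi^{-1}(0)\circlearrowleft\muhat];u,v\bigr).\]
On the other hand, $E_\lambda(F_{f,0};u,v)$ can be computed on the same resolution via M. Saito's theory of mixed Hodge modules: the nearby cycle ${}^p\psi_f^H(\QQ_{\CC^n}^H[n])$ equals the proper direct image under $\pi$ of ${}^p\psi_{f\circ\pi}^H(\QQ_Y^H[n])$, and the latter, supported on the normal crossings divisor $\pi^{-1}(0)$, admits a Steenbrink--Navarro type weight spectral sequence whose $E_1$-page is assembled from the constant mixed Hodge modules on the cyclic coverings $\tl E_I^\circ$. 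Taking the stalk at $0$, passing to cohomology and to the $\lambda$-eigenspace of the semisimple part of the monodromy, and forming the alternating sum of Hodge-Deligne polynomials yields precisely the same expression, with the factor $(1-uv)^{|I|-1}$ arising from the E-polynomial of the torus $(\CC^*)^{|I|-1}$ governing the local geometry along $E_I^\circ$.

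The principal obstacle is matching the $\muhat$-equivariance on the two sides: one must check that the $\mu_{N_I}$-action defining the class $[\tl E_I^\circ\cap\pi^{-1}(0)\circlearrowleft\muhat]$ in $\calM_\CC^{\muhat}$ is carried by the Hodge realization to the semisimple monodromy action on the appropriate graded pieces of the limit mixed Hodge structure, so that eigenspaces on either side correspond. In the original proof of Denef-Loeser this equivariant compatibility is handled by introducing an auxiliary formal variable tracking monodromy eigenvalues and reducing, via Bittner's presentation of the Grothendieck ring by smooth projective varieties carrying a finite-order automorphism, to a classical statement about the Hodge structure on the cohomology of such a pair; alternatively, one can argue directly on $Y$ using the explicit combinatorial description of $\psi_{f\circ\pi}$ along a normal crossings configuration, combined with the multiplicativity of the nearby cycle functor under the $N_I$-fold base change that produces the covering $\tl E_I^\circ\to E_I^\circ$.
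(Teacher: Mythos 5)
The paper does not prove this statement at all: it is imported verbatim from Denef--Loeser \cite{DLarc}, so there is no internal argument to compare yours against. Measured against the actual Denef--Loeser proof, your outline has the right architecture --- the explicit resolution formula $\calS_{f,0}=\sum_{\emptyset\neq I}(1-\LL)^{|I|-1}[\tl{E^{\circ}_{I,0}}]$ (Proposition~\ref{fmot}), the ring homomorphism $E_{\lambda}$ with $E_{\lambda}(\LL;u,v)=uv$, the identification ${}^p\psi_{f}\simeq \DR\pi_{*}{}^p\psi_{f\circ\pi}$ for the proper map $\pi$ which is an isomorphism off $V$, and the computation of the stalk at $0$ via the normal crossings description of $\psi_{f\circ\pi}$. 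Two points deserve care. First, $\psi_{f\circ\pi}$ is supported on $\pi^{-1}(V)$, not on $\pi^{-1}(0)$; what you actually use is $(\psi_{f}\KK)_{0}\simeq \DR\Gamma(\pi^{-1}(0);\psi_{f\circ\pi}\KK|_{\pi^{-1}(0)})$, and then the compactness of $\pi^{-1}(0)$ (properness of $\pi$) to replace $\DR\Gamma$ by $\DR\Gamma_{c}$ so that additivity over the strata $E_{I}^{\circ}\cap\pi^{-1}(0)$ is legitimate at the level of $E$-polynomials; without this remark the passage from stalk cohomology to the compactly supported $E$-polynomials of the open strata is not justified. Second, and more seriously, the heart of the theorem is exactly the step you defer: showing that, stratum by stratum, the $\lambda$-part of the limit mixed Hodge structure (with its weight filtration coming from the Steenbrink--Navarro construction, and the eigenvalue decomposition coming from the finite-order part of the monodromy, which is encoded in the rank-one local systems on $E_{I}^{\circ}$ whose pullback trivializes precisely on the $m_{I}$-fold cover $\tl{E_{I}^{\circ}}$) has equivariant Hodge--Deligne polynomial $(1-uv)^{|I|-1}E_{\lambda}([\tl{E^{\circ}_{I,0}}\circlearrowleft\muhat];u,v)$. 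As written, this is asserted (``yields precisely the same expression'') rather than proved, and the two alternatives you mention for handling the equivariance (Bittner's presentation, or a direct computation on $Y$) are named but not executed. So your text is a correct roadmap of the known proof, but the identity it is supposed to establish is precisely the part left unverified; to make it a proof you would need to carry out the local computation of $\psi_{f\circ\pi}$ along the normal crossings divisor (separating the unipotent and non-unipotent parts, where the covers $\tl{E_{I}^{\circ}}$ enter) and check that the induced mixed Hodge structures and semisimple monodromy actions match those defining the classes $[\tl{E^{\circ}_{I,0}}\circlearrowleft\muhat]$ in $\calM_{\CC}^{\muhat}$.
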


Originally, $\calS_{f,0}$ is defined abstractly in \cite{DLarc} by using the theory of arc spaces.
However, by using a log resolution of the pair $(\CC^n, f^{-1}(0))$,
we can describe $\calS_{f,0}$ explicitly as follows.
Let $Y$ be a smooth algebraic variety over $\CC$ and $\pi\colon Y\lr \CC^n$ be a proper morphism such that $\pi^{-1}(V)$ is a normal crossing divisor of $Y$ and $\pi$ induces an isomorphism $Y\setminus \pi^{-1}(V)\simar \CC^n \setminus V$.
Let
\[\pi^{-1}(V)=E_{1}\cup\dots\cup E_{m}\]
be the irreducible decomposition of $\pi^{-1}(V)$.
We denote by $m_{i}$ the order of zeros along $E_{i}$ of $f\circ \pi$.
For a subset $I\subset \{1,\dots, m\}$,
we define 
\[E_{I}:=\cap_{i\in I}E_{i}, \quad E_{I}^{\circ}:=E_{I}\setminus \cup_{i\notin I}E_{i}\] and $m_{I}:=\gcd_{i\in I}(m_{i})$.
Moreover, we define a covering $\tl{E_{I}^{\circ}}$ of $E_{I}^{\circ}$ in the following way.
For a point in $E_{I}^{\circ}$, we take a Zariski open neighborhood $U$ of it in $Y$
on which for any $i\in I$ there exists a regular function $h_{i}$ such that $E_{i}\cap U=\{h_{i}=0\}$.
We have $f\circ \pi=f_{1}f_{2}^{m_{I}}$ on $U$, where we set $f_{1}=(f\circ \pi)\prod_{i\in I}h_{i}^{-m_{i}}$ and $f_{2}=\prod_{i\in I}h_{i}^{m_{i}/m_{I}}$.
Note that $f_{1}$ is a unit on $U$.
Then, we have a covering of $E^{\circ}_{I}\cap U$ defined by
\begin{align}\label{ecapu}
\{(z,y)\in \CC\times (E^{\circ}_{I}\cap U)\ |\ z^{m_{I}}=f_{1}^{-1}(y) \}.
\end{align}
Consider an open covering of $E_{I}^{\circ}$ by such affine open sets $E_{I}^{\circ}\cap U$.
Then by gluing together the varieties (\ref{ecapu}) in an obvious way, we obtain an $m_{I}$-fold covering $\tl{E^{\circ}_{I}}$ of $E_{I}^{\circ}$.
Moreover by the multiplication of $\exp(2\pi\sqrt{-1}/m_{I})\in\CC$ to the $z$-coordinate of (\ref{ecapu}),
we can endow $\tl{E_{I}^{\circ}}$ with a $\mu_{m_{I}}$-action (and also a $\muhat$-action induced by it).
We denote by $\tl{E_{I,0}^{\circ}}$ the base change of $\tl{E_{I}^{\circ}}\lr E_{I}^{\circ}$ by $\pi^{-1}(0)\cap E_{I}^{\circ}\hlr E_{I}^{\circ}$.
Then we obtain the following expression of $\calS_{f,0}$.
\begin{prop}[Denef-Loeser~\cite{DLarc}]\label{fmot}
In the situation as above, we have
\begin{align}\label{formot}
\calS_{f,0}= \sum_{\emptyset\neq I\subset \{1,\dots,m\}}(1-\LL)^{|I|-1}[\tl{E^{\circ}_{I,0}}] 
\end{align}
in $\calM_{\CC}^{\muhat}$.
\end{prop}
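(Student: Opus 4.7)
The plan is to derive this expression from the arc-space definition of $\calS_{f,0}$ given by Denef-Loeser \cite{DLarc}, using the motivic change of variables formula applied to the log resolution $\pi$. Since the formula in the statement does not involve the discrepancies of $\pi$, the key phenomenon will be that the discrepancy contributions cancel when one passes to the formal limit at $T=\infty$.

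First, I would recall that $\calS_{f,0}$ is (up to sign) the formal limit $T \to \infty$ of the motivic zeta function
\begin{equation*}
Z_{f,0}(T) = \sum_{N \geq 1}[\mathcal{X}_{N,0}(f)\circlearrowleft\muhat] \, \LL^{-nN} T^N \in \calM_{\CC}^{\muhat}[[T]],
\end{equation*}
where $\mathcal{X}_{N,0}(f)$ parametrizes truncated arcs $\phi\in(\CC[t]/t^{N+1})^n$ with $\phi(0)=0$ and $f(\phi(t)) \equiv c\,t^N \pmod{t^{N+1}}$ for some $c\ne0$, endowed with the $\mu_N$-action $\zeta\cdot\phi(t) = \phi(\zeta t)$.

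Next, I would pull the sum back along $\pi$. Since $\pi$ restricts to an isomorphism outside $\pi^{-1}(V)$ and $\pi^{-1}(0)\subset\bigcup_iE_i$, each arc in $\mathcal{X}_{N,0}(f)$ lifts uniquely to an arc on $Y$ whose origin lies in exactly one stratum $E_I^{\circ}\cap \pi^{-1}(0)$. On a chart $U$ as in the statement, with $E_i\cap U=\{h_i=0\}$ for $i\in I$ and $f\circ\pi = u\prod_{i\in I}h_i^{m_i}$, arcs hitting $E_I^{\circ}\cap U$ are determined by a point of $E_I^{\circ}\cap U$ together with strictly positive orders $n_i$ of contact along each $E_i$ satisfying $\sum_{i\in I}m_in_i=N$. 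The change of variables formula contributes a Jacobian factor $\LL^{-\sum_{i\in I}(N_i+1)n_i}$, where $N_i$ is the multiplicity of $E_i$ in the relative canonical divisor $K_{Y/\CC^n}$. The leading coefficient of $f\circ\pi\circ\tilde\phi$ involves $u(\tilde\phi(0))^{-1}$ and an $m_I$-th power coming from the arc parameters, and enforcing a fixed leading coefficient $c$ (compatibly with the $\mu_N$-action $t\mapsto \zeta t$) forces one to extract an $m_I$-th root of $u^{-1}$; this is precisely the $\muhat$-equivariant construction \eqref{ecapu} defining $\tl{E_I^{\circ}}$. Summing the geometric series over admissible $(n_i)$ yields
\begin{equation*}
Z_{f,0}(T) = \sum_{\emptyset\neq I\subset\{1,\ldots,m\}}(\LL-1)^{|I|-1}[\tl{E_{I,0}^{\circ}}]\prod_{i\in I}\frac{\LL^{-N_i-1}T^{m_i}}{1-\LL^{-N_i-1}T^{m_i}}.
\end{equation*}

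Finally, each factor $\LL^{-N_i-1}T^{m_i}/(1-\LL^{-N_i-1}T^{m_i})$ extends as a rational function in $T$ to the value $-1$ at $T=\infty$, so the product over $i\in I$ contributes $(-1)^{|I|}$; combined with the prefactor $(\LL-1)^{|I|-1}=(-1)^{|I|-1}(1-\LL)^{|I|-1}$ and the sign in $\calS_{f,0}=-\lim_{T\to\infty}Z_{f,0}(T)$, the signs collapse to $+1$ and the discrepancies $N_i$ disappear, leaving exactly the claimed formula. The main obstacle is the second step: identifying the $\muhat$-covering that emerges from the leading-coefficient analysis with the cover $\tl{E_{I,0}^{\circ}}$ described by \eqref{ecapu}, equivariantly with respect to the rescaling $t\mapsto \zeta t$. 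Since all of this is standard Denef-Loeser material, the proof ultimately reduces to applying \cite{DLarc} to the given log resolution.
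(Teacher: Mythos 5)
The paper does not prove this proposition at all: it is quoted verbatim from Denef--Loeser \cite{DLarc}, so there is no internal proof to compare yours against. What you have written is a correct outline of the standard Denef--Loeser argument, and it is the right one: express $\calS_{f,0}$ as $-\lim_{T\to\infty}Z_{f,0}(T)$ for the local motivic zeta function, compute $Z_{f,0}(T)$ on the resolution via the motivic change of variables formula, observe that for fixed contact orders $(n_i)_{i\in I}$ the leading-coefficient condition produces exactly the $\mu_{m_I}$-cover $\tl{E_{I}^{\circ}}$ of the paper's formula (\ref{ecapu}) together with a factor $(\LL-1)^{|I|-1}$, sum the geometric series, and let $T\to\infty$ so that each factor $\LL^{-\nu_i}T^{m_i}/(1-\LL^{-\nu_i}T^{m_i})$ tends to $-1$ and the discrepancies cancel; your sign bookkeeping $(-1)^{|I|}\cdot(\LL-1)^{|I|-1}\cdot(-1)=(1-\LL)^{|I|-1}$ is correct. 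Two points deserve more care if this were to be written out in full. First, truncated arcs in $\mathcal{X}_{N,0}(f)$ do not literally ``lift uniquely'' to $Y$; the honest statement is that non-degenerate formal arcs lift by properness and the valuative criterion, and the passage to finite level is exactly what the change of variables (motivic integration) formalism handles, so this step should be phrased at the level of arc spaces and cylinder sets rather than truncations. Second, the limit $T\to\infty$ must be taken in the Denef--Loeser sense, i.e.\ inside the subring of $\calM_{\CC}^{\muhat}[[T]]$ generated by the rational functions $\LL^{-a}T^{b}/(1-\LL^{-a}T^{b})$, where the limit is a well-defined ring homomorphism sending each such generator to $-1$; this is what makes ``the value at $T=\infty$'' meaningful. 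With these caveats your proposal is the standard proof of the cited result, not a new route.
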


We denote by $\Sigma_{0}$ the dual fan of $\Ntph$ in $\RR^n$.
Let $\Sigma$ be a smooth subdivision of $\Sigma_{0}$,
and denote by $X_{\Sigma}$ the toric variety associated with it.
We denote by $\Sigma_{1}$ the fan which consists of all the faces of $\RR^n_
{\geq 0}$. 
Note that the toric variety associated with it is $\CC^n$.
Then, the morphism of fans $\Sigma\lr \Sigma_{1}$ induces a
morphism of toric varieties 
\[\pi \colon X_{\Sigma}\lr \CC^n.\]
If $f$ is non-degenerate at $0$,
we can take $\pi \colon X_{\Sigma}\lr \CC^n$ as a log resolution of $(\CC^n, V)$ (In fact, $\pi^{-1}(V)$ is normal crossing only in a neighborhood of $\pi^{-1}(0)$ in $X_{\Sigma}$.
Nevertheless, we can apply Proposition~\ref{fmot}.).
Moreover by calculating the right hand side of (\ref{formot}),
we can describe the motivic Milnor fiber $S_{f,0}$ explicitly in terms of the Newton boundary $\Ntbd$ as follows (see Matsui-Takeuchi~\cite[Section~4]{MT}). 
Assume that $f(x)=\sum_{\alpha\in\ZZpn}a_{\alpha}x^{\alpha}$ ($a_{\alpha}\in\CC$) is non-degenerate at $0$.
For a compact face $F$ of $\Ntph$, we
define a lattice polytope $\Delta_{F}\subset \RR^n$ by $\Delta_{F}:=\Conv({F\cup \{0\}})$.
Moreover, we define a polynomial $\tl{f_{\Delta_{F}}}$ to be $f_{\Delta_{F}}-1$.
Then we define a hypersurface $Z^{\circ}_{F}$ in $\SPEC{\CC[\ZZ^n\cap\Aff{F}]}\simeq \CS^{\dim{F}}$
by
\[Z^{\circ}_{F}=\{x\in \SPEC{\CC[\ZZ^n\cap\Aff{F}]}\ |\ f_{F}(x)=0\}\subset \CS^{\dim{F}},\]
and a hypersurface $Z^{\circ}_{\Delta_{F}}$ in $\SPEC{\CC[\ZZ^n\cap \Aff{\Delta_{F}}]}\simeq \CS^{\dim{F}+1}$ by
\[Z^{\circ}_{\Delta_{F}}=\{x\in \SPEC{\CC[\ZZ^n\cap\Aff{\Delta_{F}}]}\ |\ \tl{f_{\Delta_{F}}}(x)=0\}\subset \CS^{\dim{F}+1}.\]
We endow $Z_{F}^{\circ}$ with the trivial $\muhat$-action,
and $Z_{\Delta_{F}}^{\circ}$ with a good $\muhat$-action in the following way.
Let $\nu_{F}$ be the linear function on $\Aff{\Delta_{F}}\simeq \RR^{\dim{F}+1}$ which takes the value
$1$ on $F$
and $e_{F}\in \CS^{\dim{F}+1}\simeq \SPEC{\CC[\ZZ^n\cap\Aff{\Delta_{F}}]}\simeq \mathrm{Hom}_{\mathrm{group}}(\ZZ^n\cap\Aff(\Delta_{F}),\CC^*)$
be the element which corresponds to the group homomorphism
\[\Exp\left(2\sqrt{-1}{\nu_{F}(\cdot)}\right) \in \mathrm{Hom}_{\mathrm{group}}(\ZZ^n\cap\Aff(\Delta_{F}),\CC^*).\] 
Then $Z^{\circ}_{\Delta_{F}}$ is invariant by the multiplication by $e_{F}$
and hence we can endow $Z^{\circ}_{\Delta_{F}}$ with a $\mu_{d_{F}}$-action.
We thus obtain the elements $[Z^{\circ}_{{F}}\circlearrowleft\muhat]$ and $[Z^{\circ}_{\Delta_{F}}\circlearrowleft\muhat]$ in $\calM^{\muhat}_{\CC}$ for any compact face $F\prec \Ntph$.
For a compact face $F$ of $\Ntph$ we define also a subset $I_{F}\subset \{1,\dots,n\}$
to be the minimal one such that $F\subset \RR^{I_{F}}$,
and set $s_{F}=|I_{F}|$.
Then we have the following theorem.
\begin{thm}[see~Matsui-Takeuchi~\cite{MT}]\label{motivicdesc}
Assume that $f$ is non-degenerate at $0$.
Then we have
\[\calS_{f,0}=\sum_{F\prec \Ntph\colon \mbox{\tiny compact}}(1-\LL)^{s_{F}-\dim{F}-1}\Bigl\{(1-\LL)\cdot[Z^{\circ}_{{F}}\circlearrowleft\muhat]+[Z^{\circ}_{\Delta_{F}}\circlearrowleft\muhat]\Bigr\}\in \calM_{\CC}^{\muhat},\]
where in the sum $\Sigma$ the face $F(\neq \emptyset)$ ranges through the compact ones of $\Ntph$.   
\end{thm}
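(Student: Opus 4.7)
The strategy is to apply the Denef--Loeser formula of Proposition~\ref{fmot} to the toric log resolution $\pi\colon X_{\Sigma}\lr\CC^{n}$ and then reorganise the resulting expression according to the compact faces of $\Ntph$. Since $f$ is non-degenerate at $0$, a standard toric argument shows that $\pi^{-1}(V)$ is normal crossing in a neighbourhood of $\pi^{-1}(0)$, which is sufficient because only the terms in (\ref{formot}) supported on $\pi^{-1}(0)$ contribute to $\calS_{f,0}$.

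First I would identify the irreducible components of $\pi^{-1}(V)$ meeting $\pi^{-1}(0)$: these are the toric divisors $T_{\rho}$ attached to the rays $\rho$ of $\Sigma$ lying inside the dual cone $\sigma_{F}$ of some compact face $F\prec\Ntph$, together with the strict transform $\tl{V}$ of $V$. The vanishing order of $f\circ\pi$ along $T_{\rho}$ is the value $m_{\rho}>0$ of the support function of $\Ntph$ on $\rho$; on any toric orbit $O_{\tau}$ with $\tau\subset\sigma_{F}$, the function $f\circ\pi$ factorises locally as a monomial in the coordinates $z_{\rho}$ ($\rho\in\tau$) times a unit multiple of (the pullback of) $f_{F}$, which cuts out precisely the hypersurface $Z^{\circ}_{F}$ after passing to the torus factor.

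Next I would partition the index sets $I$ appearing in (\ref{formot}) according to the cone $\tau\in\Sigma$ generated by the rays they involve and according to whether $\tl{V}\in I$. For each such $\tau\subset\sigma_{F}$, the covering $\tl{E^{\circ}_{I,0}}$ can be expressed as the product of a torus $(\CC^{*})^{s_{F}-\dim{F}-|\tau|}$ with either $Z^{\circ}_{F}$ carrying the trivial $\muhat$-action (when $\tl{V}\notin I$) or $Z^{\circ}_{\Delta_{F}}$ carrying the $\mu_{d_{F}}$-action described before the theorem (when $\tl{V}\in I$). The key identification is that the gcd $m_{I}$ of the multiplicities $m_{\rho}$ along the rays of $\tau$ coincides with the lattice distance $d_{F}$, and that the cyclic covering of Proposition~\ref{fmot} then restricts to the covering $\tl{f_{\Delta_{F}}}=0$ defining $Z^{\circ}_{\Delta_{F}}$, with the monodromy action given by multiplication by $\Exp(2\pi\sqrt{-1}\nu_{F}(\cdot))$.

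Finally, using $[(\CC^{*})^{k}]=(\LL-1)^{k}$ and summing over all cones $\tau\subset\sigma_{F}$ of the smooth subdivision $\Sigma$, one finds that the total contribution of the cone $\sigma_{F}$ collapses to
\[(1-\LL)^{s_{F}-\dim{F}-1}\Bigl\{(1-\LL)\cdot[Z^{\circ}_{F}\circlearrowleft\muhat]+[Z^{\circ}_{\Delta_{F}}\circlearrowleft\muhat]\Bigr\},\]
via a combinatorial identity saying that the $(1-\LL)^{|\tau|-1}$-weighted sum over refinements of $\sigma_{F}$ telescopes into $(1-\LL)^{\dim{\sigma_{F}}-1}$. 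The main obstacle is this last step: one has to verify both that the answer is intrinsic to $\Ntph$ (hence independent of the auxiliary subdivision $\Sigma$) and that the contributions of the two cases $\tl{V}\in I$ and $\tl{V}\notin I$ reassemble with precisely the exponents in the statement, which amounts to a careful inclusion--exclusion along each cone $\sigma_{F}$ of the dual fan $\Sigma_{0}$.
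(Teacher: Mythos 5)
Your overall strategy --- apply Proposition~\ref{fmot} to the toric modification $\pi\colon X_{\Sigma}\to\CC^{n}$ coming from a smooth subdivision of the dual fan of $\Ntph$ and regroup the strata by compact faces --- is exactly the route the paper relies on (it simply quotes the computation of Section~4 of \cite{MT}, the only new point in the non-convenient case being that just the cones meeting $\Int{\RRpn}$, hence just the compact faces, contribute over $\pi^{-1}(0)$, where $\pi^{-1}(V)$ is normal crossing). However, your dictionary between strata and the two kinds of terms is reversed. When the strict transform $\tl{V}$ belongs to $I$, the order of $f\circ\pi$ along it near $\pi^{-1}(0)$ is $1$, so $m_{I}=1$ and $\tl{E^{\circ}_{I,0}}$ is the \emph{trivial} covering; these strata lie inside $\tl{V}$, where on each orbit the equation is a monomial times $f_{F}$, and they are what produce the terms $(1-\LL)\cdot[Z^{\circ}_{F}\circlearrowleft\muhat]$ with trivial action (the extra factor $(1-\LL)$ matching the extra element of $I$). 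The strata with $I$ consisting only of exceptional toric divisors lie away from $\tl{V}$; there the $m_{I}$-fold covering $z^{m_{I}}=f_{1}^{-1}$ is nontrivial, and it is these that yield $[Z^{\circ}_{\Delta_{F}}\circlearrowleft\muhat]$, i.e. the level set $\{f_{\Delta_{F}}=1\}$ with its $\mu_{d_{F}}$-action. You state the opposite assignment.

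Moreover, your ``key identification'' $m_{I}=d_{F}$ is false for a general cone $\tau$ inside the normal cone of $F$: Lemma~\ref{hullsingmaeq} gives $m_{\sigma}=d_{F(\tl{\sigma})}$ only when $\dim\sigma=\dim\tl{\sigma}$. For instance, for $f$ of Example~\ref{exrf} and $F=\{(2,4)\}$ one has $d_{F}=2$, while the ray generated by $(4,1)$, which lies in the interior of the normal cone of $F$, has $m_{\rho}=12$; the $\mu_{d_{F}}$-action therefore cannot be read off from $m_{I}$ alone, but emerges from the covering after the monomial factorisation of $f\circ\pi$ and the summation over strata. Relatedly, the asserted decomposition $\tl{E^{\circ}_{I,0}}\cong(\CC^{*})^{s_{F}-\dim{F}-|\tau|}\times(\cdots)$ cannot hold for an individual stratum: for $\tau$ meeting $\Int{\RRpn}$ its part over $\pi^{-1}(0)$ has dimension $n-\dim\tau$ (resp. $n-\dim\tau-1$ when $\tl{V}\in I$), not $s_{F}-\dim\tau$, unless $s_{F}=n$; the exponent $s_{F}$ only appears after the Euler-characteristic type sum over all cones of $\Sigma$ refining the normal cone of $F$ and not contained in $\partial\RRpn$. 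That final telescoping, which you explicitly leave unverified and call the main obstacle, is precisely the content of the computation in \cite{MT} that the theorem is citing; as written, your argument records the correct strategy but, with the two misidentifications above and the key combinatorial step missing, it does not yet prove the formula.
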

This theorem was proved only in the case where $f$ is convenient in \cite{MT}.
However, we can show it even if $f$ is not convenient similarly.

\subsection{Katz and Stapledon's polynomials}\label{KaSpoly}
In this subsection, we introduce some polynomials defined by Katz-Stapledon~ \cite{KaSthpoly}, \cite{KaSttrop} and Stapledon~\cite{SFM}.
Let $P\subset \RR^n$ be a polytope in $\RR^n$.
If a subset $F\subset  P$ is a face of $P$ (possibly $F=P$ or $F= \emptyset$), we write $F\prec P$.
For a pair of faces $F\prec F'$ of $P$,
we define $[F,F']$ to be the face poset $\{F''\prec P\ |\ F\prec F''\prec F'\}$,
and $[F,F']^*$ to be its opposite poset.

\begin{defi}
Let $F\prec F'$ be a pair of faces of $P$.
We define one-variable polynomials $g([F,F'];t)$ and $g([F,F']^*;t)$ with coefficients in $\ZZ$ of degree strictly less than ${(\dim F'-\dim F)}/2$ by the following inductive way.
If $F=F'$, we set $g([F,F'];t)=1$ and $g([F,F']^*;t)=1$.
If $F\Precneq F'$, we define them by
\begin{align*}
t^{\dim{F'}-\dim{F}}g([F,F'];t^{-1})=\sum_{F''\in [F,F']}(t-1)^{\dim{F'}-\dim{F''}}g([F,F''];t)\quad \mbox{and}
\\
t^{\dim{F'}-\dim{F}}g([F,F']^*;t^{-1})=\sum_{F''\in [F,F']^*}(t-1)^{\dim{F''}-\dim{F}}g([F'',F']^*;t).
\end{align*}
\end{defi}

Let $P$ be a lattice polytope in $\RR^n$ (i.e. all the vertices of $P$ are in $\ZZ^n$) and $\nu$ a convex $\RR$-valued function on $P$ 
which is piecewise $\QQ$-affine with respect to a lattice polyhedral subdivision of $P$.
Let $\calS_{\nu}$ be the coarsest such polyhedral subdivision.
A (possibly empty) polytope $F\subset P$ in $\calS_{\nu}$ is called a \textit{cell}.
For a cell $F\in\calS_{\nu}$, let $\link_{\calS_{\nu}}(F)$ be the set of all cells in $\calS_{\nu}$ containing $F$, and we call it the \textit{link of $F$}.
Moreover, we denote by $\sigma(F)$ the smallest face of $P$ containing $F$.

\begin{defi}
For a (possibly empty) cell $F\in \calS_{\nu}$,
the \textit{$h$-polynomial $h(\mathrm{lk}_{\mathcal{S}}(F);t)$ of the link $\link_{\calS_{\nu}}(F)$}
is defined by
\[t^{\dim{P}-\dim{F}}h(\link_{\calS_{\nu}}(F);t^{-1})=\sum_{F'\in\link_{\calS_{\nu}}(F)}g([F,F'];t)(t-1)^{\dim{P}-\dim{F'}}.\]
The \textit{local $h$-polynomial $l_{P}(\calS_{\nu},F;t)$ of $F$ in $\calS_{\nu}$} is defined by
\[l_{P}(\calS_{\nu},F;t)=\sum_{\sigma(F)\prec Q\prec P}(-1)^{\dim{P}-\dim{Q}}h(\link_{\calS_{\nu}|_{Q}}(F);t)g([Q,P]^*;t).\]
\end{defi}

Note that if $\calS_{\nu}$ is the trivial subdivision of $P$ (i.e. $\calS_{\nu}$ consists of all the faces of $P$), we have $h(\link_{\calS_{\nu}}(Q);t)=g([Q,P];t)$ and $l_{P}(\calS_{\nu},Q;t)=0$ for a face $Q$ of $P$.
For $\lambda\in\CC$, $m\in\ZZ_{\geq 0}$ and $v\in mP\cap \ZZ^n$,
we set
\renewcommand{\arraystretch}{1.5}
\begin{align*}
w_{\lambda}(v)=\left\{
\begin{array}{ll}
1&\ (\exp(2\pi\sqrt{-1}\cdot m\nu(\frac{v}{m}))=\lambda)\\
0&\ (\mbox{otherwise}).
\end{array}
\right.
\end{align*}
\renewcommand{\arraystretch}{1}
For $m\in\ZZ$, we define an integer $f_{\lambda}(P,\nu;m)\in\ZZ$ by
\[f_{\lambda}(P,\nu;m):=\sum_{v\in mP\cap\ZZ^n}w_{\lambda}(v).\]
Then $f_{\lambda}(P,\nu;m)$ is a polynomial in $m$ whose degree is less than or equal to $\dim{P}$.
\begin{defi}
\begin{enumerate}
\item[(i)] If $P\neq \emptyset$, we define the \textit{$\lambda$-weighted $h^*$-polynomial} $h^*_{\lambda}(P,\nu;u)\in\ZZ[u]$ by
\[h^*_{\lambda}(P,\nu;u)=(1-u)^{\dim{P}+1}\sum_{m\geq 0}f_{\lambda}(P,\nu;m)u^m.\]
We set $h^*_{1}(\emptyset,\nu;u)=1$ and $h^*_{\lambda}(\emptyset,\nu;u)=0$ for $\lambda\neq 1$. 
\item[(ii)] If $P\neq \emptyset$, we define the \textit{$\lambda$-local weighted $h^*$-polynomial} $l_{\lambda}^*(P,\nu;u)\in\ZZ[u]$ by
\[l_{\lambda}^*(P,\nu;u)=\sum_{Q\prec P}(-1)^{\dim{P}-\dim{Q}}h^*_{\lambda}(Q,\nu|_{Q};u)\cdot g([Q,P]^*;u).\]
We set $l^*_{1}(\emptyset,\nu;u)=1$ and $l^*_{\lambda}(\emptyset,\nu;u)=0$ for $\lambda\neq 1$.

\item[(iii)] We define the \textit{$\lambda$-weighted limit mixed $h^*$-polynomial} $h^*_{\lambda}(P,\nu;u,v)\in\ZZ[u,v]$ by
\[h_{\lambda}^*(P,\nu;u,v):=\sum_{F\in \calS_{\nu}}v^{\dim{F}+1}l^*_{\lambda}(F,\nu|_{F};uv^{-1})\cdot h(\link_{\calS_{\nu}}(F);uv).\]
\item[(iv)] We define the \textit{$\lambda$-local weighted limit mixed $h^*$-polynomial} 
$l^*_{\lambda}(P,\nu;u,v)\in\ZZ[u,v]$ by
\[l^*_{\lambda}(P,\nu;u,v):=\sum_{F\in\calS_{\nu}}v^{\dim{F}+1}l^*_{\lambda}(P,\nu|_{F};uv^{-1})\cdot l_{P}(\calS_{\nu},F;uv).\]
\item[(v)] We define the \textit{$\lambda$-weighted refined limit mixed $h^*$-polynomial} $h^*_{\lambda}(P,\nu;u,v,w)\in\ZZ[u,v,w]$ by
\[h^*_{\lambda}(P,\nu;u,v,w):=\sum_{Q\prec P}w^{\dim{Q}+1}l^*_{\lambda}(Q,\nu|_{Q};u,v)\cdot g([Q,P];uvw^2).\]
\end{enumerate}
\end{defi}

We introduce some properties of these polynomials.

\begin{prop}[see Theorem~4.21 of Stapledon~\cite{SFM}]\label{hformula}
In the situation as above, the following holds. 
\begin{enumerate}
\item[(i)]
We have
\[l^*_{\lambda}(P,\nu;u,1)=l^*_{\lambda}(P,\nu;u),\]
\[h^*_{\lambda}(P,\nu;u,1)=h^{*}_{\lambda}(P,\nu;u)  \mbox{\quad and}\]
\[h^*_{\lambda}(P,\nu;u,v,1)=h^*_{\lambda}(P,\nu;u,v). \]

\item[(ii)]
We have
\[l^*_{\lambda}(P,\nu;u,v)=l^*_{\ov{\lambda}}(P,\nu;v,u)= (uv)^{\dim P+1}l^*_{\lambda}(P,\nu;v^{-1},u^{-1}) \mbox{\quad and}\]
\[h^*_{\lambda}(P,\nu;u,v,w)=h^*_{\ov{\lambda}}(P,\nu;v,u,w)=h^*_{\lambda}(P,\nu;v^{-1},u^{-1},uvw).\]

\item[(iii)]
We have
\[h^*_{\lambda}(P,\nu;u,v)=\sum_{\substack{F\in \calS_{\nu}\\ \sigma(F)=P}}h^*_{\lambda}(F,\nu|_{F};u,v)(uv-1)^{\dim{P}-\dim{F}}.\]
\end{enumerate}
\end{prop}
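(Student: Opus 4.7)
The plan is to reproduce the argument of Stapledon (Theorem~4.21 of \cite{SFM}), since all three statements are purely combinatorial identities about weighted Ehrhart theory of lattice polytopes equipped with a piecewise $\QQ$-affine convex function $\nu$. Rather than compute from scratch, I would organize the verification around the inductive definitions of $h^*_\lambda$ and $l^*_\lambda$ together with standard identities for $g$-, $h$-, and local $h$-polynomials of posets.

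For part~(i), I would substitute $v = 1$ (resp.\ $w = 1$) directly into each definition. When $v=1$ in the defining sum for $l^*_\lambda(P,\nu;u,v)$, the factor $v^{\dim F+1}$ disappears and the sum becomes $\sum_{F\in\calS_\nu}l^*_\lambda(F,\nu|_F;u)\cdot l_P(\calS_\nu,F;u)$, which collapses to $l^*_\lambda(P,\nu;u)$ by the refinement formula for local $h$-polynomials established by Katz-Stapledon~\cite{KaSthpoly}. The identity $h^*_\lambda(P,\nu;u,1)=h^*_\lambda(P,\nu;u)$ then follows by unwinding the definition of $h^*_\lambda(P,\nu;u,v)$ and applying part~(i) just proved for $l^*_\lambda$ together with the identity $h(\link_{\calS_\nu}(F);u)=\sum_{F\prec F'}g([F,F'];u)(u-1)^{\dim P-\dim F'}$. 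The last specialization $w=1$ is the most transparent, since $h^*_\lambda(P,\nu;u,v,w)$ is defined as a sum of $l^*_\lambda(Q,\nu|_Q;u,v)\cdot g([Q,P];uvw^2)$ weighted by $w^{\dim Q+1}$, and at $w=1$ one recognizes the defining sum of $h^*_\lambda(P,\nu;u,v)$ via the trivial-subdivision case $h(\link_{\calS_\nu}(F);uv)=g([\sigma(F),P];uv)$.

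For part~(ii), the complex-conjugation symmetry $l^*_\lambda(P,\nu;u,v)=l^*_{\ov\lambda}(P,\nu;v,u)$ traces back to the symmetry $w_\lambda(v)=w_{\ov\lambda}(-v)$ of weights on lattice points (after identifying lattice points $v \in mP \cap \ZZ^n$ with their negatives through Ehrhart reciprocity), combined with the palindromic symmetry of the local $h$-polynomial $l_P(\calS_\nu,F;t)=t^{\dim P-\dim F}l_P(\calS_\nu,F;t^{-1})$ due to Stanley. The reciprocity $l^*_\lambda(P,\nu;u,v)=(uv)^{\dim P+1}l^*_\lambda(P,\nu;v^{-1},u^{-1})$ is then deduced by combining Ehrhart-Macdonald reciprocity for the weighted Ehrhart series defining $h^*_\lambda$ with the palindromicity of $g([Q,P]^*;t)$. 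The corresponding assertions for $h^*_\lambda(P,\nu;u,v,w)$ follow by invoking the $l^*_\lambda$-version termwise in the definition.

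For part~(iii), I would partition the sum $\sum_{F\in\calS_\nu}v^{\dim F+1}l^*_\lambda(F,\nu|_F;uv^{-1})\cdot h(\link_{\calS_\nu}(F);uv)$ defining $h^*_\lambda(P,\nu;u,v)$ according to the smallest face $\sigma(F)\prec P$ containing $F$: cells $F$ with $\sigma(F)=Q$ are precisely the cells of the induced subdivision $\calS_\nu|_Q$ having interior meeting the relative interior of $Q$. Unwinding the definition of $h(\link_{\calS_\nu}(F);uv)$ in terms of $g([F,F'];uv)$ and regrouping via the inductive defining relation of $h^*_\lambda(Q,\nu|_Q;u,v)$ (applied at each face $Q\prec P$) reorganizes the double sum into precisely $\sum_{Q\prec P\ :\ \sigma=P}h^*_\lambda(Q,\nu|_Q;u,v)(uv-1)^{\dim P-\dim Q}$.

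The main obstacle will be the combinatorial bookkeeping in part~(iii): one must carefully manipulate the triple sum (over $F$, then over $F'\supset F$ in the link, then over $Q\prec P$ via the local $h$-polynomial expansion) while keeping track of which polytope supports each restricted function $\nu|_F$ and which subdivision is meant. Parts~(i) and~(ii) are comparatively routine once one trusts the classical palindromicity and reciprocity facts, but part~(iii) requires a genuine interchange-of-summation argument that is the technical heart of Stapledon's Theorem~4.21.
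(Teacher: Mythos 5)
There is nothing in the paper to compare your argument with: Proposition~\ref{hformula} is not proved in the paper at all, it is quoted verbatim from Theorem~4.21 of Stapledon~\cite{SFM} (resting on the local $h$-machinery of Katz--Stapledon). So your proposal is in effect a plan to re-derive Stapledon's theorem, and judged on those terms it is circular or incomplete exactly at the points where the content lies.

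Concretely: (a) In (i), the specializations are not definitional unwindings. Setting $w=1$ in the definition of $h^*_{\lambda}(P,\nu;u,v,w)$ gives $\sum_{Q\prec P}l^*_{\lambda}(Q,\nu|_{Q};u,v)\,g([Q,P];uv)$, a sum over the \emph{faces} of $P$, whereas $h^*_{\lambda}(P,\nu;u,v)$ is defined as a sum over the \emph{cells} of $\calS_{\nu}$; the equality of these two expressions is precisely the nontrivial two-variable inversion/subdivision identity, and your appeal to the trivial-subdivision relation $h(\link_{\calS_{\nu}}(F);t)=g([\sigma(F),P];t)$ is not available, since $\calS_{\nu}$ is in general a nontrivial subdivision (it is only trivial on each cell, where $\nu$ is affine). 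Similarly, $l^*_{\lambda}(P,\nu;u,1)=l^*_{\lambda}(P,\nu;u)$ and $h^*_{\lambda}(P,\nu;u,1)=h^*_{\lambda}(P,\nu;u)$ amount to the weighted decomposition formulas $l^*_{\lambda}(P,\nu;t)=\sum_{F}l^*_{\lambda}(F,\nu|_{F};t)\,l_{P}(\calS_{\nu},F;t)$ and $h^*_{\lambda}(P,\nu;t)=\sum_{F}l^*_{\lambda}(F,\nu|_{F};t)\,h(\link_{\calS_{\nu}}(F);t)$, which you cite rather than prove, so the "proof" reduces to a citation of the statement being established; also, the identity you write for $h(\link_{\calS_{\nu}}(F);u)$ misquotes the definition, which involves $h(\link_{\calS_{\nu}}(F);t^{-1})$ together with the factor $t^{\dim P-\dim F}$. (b) In (ii) the mechanism is misstated: Ehrhart reciprocity does not identify lattice points of $mP$ with their negatives; the exchange $\lambda\leftrightarrow\ov{\lambda}$ comes from the conjugation of the weights $w_{\lambda}$ under weighted Ehrhart reciprocity, or equivalently from the box-point involution $\sum_{i}a_{i}v_{i}\mapsto\sum_{i}(1-a_{i})v_{i}$ in the cone over a simplex (visible in the Example quoted in the paper), which simultaneously replaces $\mathrm{pr}(v)$ by $\dim P+1-\mathrm{pr}(v)$; moreover the two-variable symmetry intertwines the factor $v^{\dim F+1}l^*_{\lambda}(F,\nu|_{F};uv^{-1})$ with $l_{P}(\calS_{\nu},F;uv)$ in each summand, so it cannot be obtained "termwise" from palindromicity alone. (c) For (iii), which you yourself call the technical heart, you give only an outline of an interchange of summations, with no verification that the reciprocal twist in the definition of $h(\link_{\calS_{\nu}}(F);t)$ and the $g$-polynomial identities actually make the regrouping close up. As it stands, the proposal would need these three points repaired (or honestly replaced by a citation, as the paper does) before it could count as a proof.
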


For the practical computation of the $h^*$-polynomials, the following example is useful.

\begin{ex}[Example~4.23 of Stapledon~\cite{SFM}]
Assume that $P$ is an $n$-dimensional simplex in $\RR^n$ and $\nu$ is a $\QQ$-affine function on it.
For a face $(\emptyset\neq)Q\prec P$, we denote by $C_{Q}\subset \RR^n\times \RR$ the rational polyhedral cone in $\RR^n\times \RR$ generated by the vectors $\{(v,1)\}_{v\in Q}$.
For the empty face $Q=\emptyset$, we set $C_{Q}=\{0\}\subset \RR^n\times \RR$.
Let $v_{0},\dots, v_{n}\in P\times \{1\}$ be the ray generators of edges of $C_{P}$.
We define a finite subset $\mathrm{Box}$ in $\ZZ^n\times \ZZ$ by
\[\mathrm{Box}:=\{v\in C_{P}\cap (\ZZ^n \times \ZZ)\ |\ v=\sum_{i=0}^{n}a_{i}v_{i}, 0\leq a_{i} <1\}.\]
We denote by $\mathrm{pr}\colon \RR^n\times \RR\lr \RR$ the projection.
Then we have
\[h^*(P,\nu;u,v,w)=\sum_{Q\prec P}\sum_{v\in \mathrm{Box}\cap \Relint{(C_{Q})}}
w_{\lambda}(v)u^{\mathrm{pr}(v)}v^{\dim{Q}+1-\mathrm{pr}(v)}w^{\dim{Q}+1} \quad \mbox{and}\]
\[l^*(P,\nu;u,v)=\sum_{v\in \mathrm{Box}\cap \Relint{(C_{P})}}w_{\lambda}(v)u^{\mathrm{pr}(v)}v^{\dim{P}+1-\mathrm{pr}(v)}.\]
\end{ex}

\subsection{The Hodge realizations of the motivic Milnor fibers}\label{chaphreal}
Next, we discuss the Hodge realization of $\calS_{f,0}$.
We recall the proposition stated below for the Hodge realizations of non-degenerate hypersurfaces in the algebraic torus $\CS^n$.
Let $g(x)=\sum_{\beta\in\ZZ^n}c_{\beta}x^\beta\in \nLPoly$ be a non-degenerate Laurent polynomial (see Definition~\ref{nondeg}) with $\dim\mathrm{NP}(g)=n$,
and $\nu$ a $\QQ$-affine function on $P:=\mathrm{NP}(g)$ such that
$\nu(\beta)\in\ZZ$ if $c_{\beta}\neq 0$.
The multiplication by the element in $\CS^n$ which correspond to the group homomorphism
\[\exp(2\pi\sqrt{-1}\nu(\ \cdot\ ))\in\mathrm{Hom}_{\mathrm{group}}(\ZZ^n,\CC^*)\]
defines a $\muhat$-action on the hypersurface  
\[Z^{\circ}:=\{x\in\CS^n\ |\ g(x)=0\}.\]
We thus obtain an element $[Z^{\circ}\circlearrowleft \muhat]$ in $\calM^{\muhat}_{\CC}$
Set
\[\epsilon(\lambda)=\left\{\begin{array}{ll}1 & (\lambda =1)\\\\0 &(\lambda\neq 1).\end{array}\right.\]
\begin{prop}[Stapledon~\cite{SFM}, Matsui-Takeuchi~\cite{MT}]\label{hodgereal}
In the situation as above,
we have
\[uvE_{\lambda}([Z^{\circ}\circlearrowleft\muhat];u,v)=
\epsilon(\lambda)(uv-1)^{n}+(-1)^{n+1}h^{*}_{\lambda}(P,\nu;u,v),\]
for $\lambda\in\CC$.
\end{prop}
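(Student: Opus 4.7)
The plan is to establish the formula by refining the Danilov--Khovanskii computation of the Hodge--Deligne polynomial of a non-degenerate affine hypersurface so as to track the $\muhat$-eigenspaces induced by $\nu$, following the approach of Stapledon.

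First I would compactify $Z^{\circ}$ to a projective hypersurface $\overline{Z}$ in a smooth projective toric variety $X_{\Sigma}$ whose fan $\Sigma$ refines the normal fan of $P$ and is compatible with $\nu$, so that $\nu$ defines a torus-invariant Cartier divisor on $X_{\Sigma}$. Non-degeneracy of $g$ guarantees that $\overline{Z}$ meets each torus orbit $O_{\tau}$ transversally in a smooth non-degenerate hypersurface $\overline{Z}_{\tau}$, and the translation by $e=\exp(2\pi\sqrt{-1}\nu(\cdot))\in\CS^{n}$ extends to a finite-order action on $\overline{Z}$ preserving the toric stratification. Standard excision then yields
\[E_{\lambda}([Z^{\circ}];u,v)=E_{\lambda}([\overline{Z}];u,v)-\sum_{\tau\neq 0}E_{\lambda}([\overline{Z}_{\tau}];u,v),\]
reducing the computation to Hodge numbers of smooth projective toric hypersurfaces equipped with the $\muhat$-action.

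Next I would invoke the Danilov--Khovanskii description of the primitive cohomology of $\overline{Z}$ in terms of lattice points of dilations $mP\cap\ZZ^{n}$, refined according to the $\muhat$-character. The key observation is that the $\lambda$-eigenspace of the cohomology is spanned by lattice points $v\in mP\cap\ZZ^{n}$ satisfying $\exp(2\pi\sqrt{-1}\cdot m\nu(v/m))=\lambda$, which is exactly the weighting appearing in the definition of $f_{\lambda}(P,\nu;m)$. Summing the resulting weighted Ehrhart series and applying Möbius inversion over the face lattice of $P$ produces the one-variable $h^{*}_{\lambda}$-polynomial, while the extra term $\epsilon(\lambda)(uv-1)^{n}$ accounts for the cohomology of the ambient torus $\CS^{n}$, which carries only the trivial $\muhat$-character and hence contributes solely when $\lambda=1$. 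The refinement to the full bigrading $(u,v)$ is then obtained from the toric degeneration of $Z^{\circ}$ along the regular polyhedral subdivision $\calS_{\nu}$ of $P$: the Deligne weight filtration is read off through the Clemens--Schmid sequence applied to this degeneration, and the local contribution at each cell $F\in\calS_{\nu}$ assembles into the factor $v^{\dim F+1}\,l^{*}_{\lambda}(F,\nu|_{F};uv^{-1})\cdot h(\link_{\calS_{\nu}}(F);uv)$ of the Katz--Stapledon formula.

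The main obstacle is the combinatorial identification in this last step: one must verify that the Batyrev--Borisov-style Möbius inversion over the face poset of $P$ together with the cell decomposition $\calS_{\nu}$ reproduces exactly the polynomials $l^{*}_{\lambda}(F,\nu|_{F};u,v)$ and $l_{P}(\calS_{\nu},F;uv)$ defined in Section~\ref{KaSpoly}. This bookkeeping, together with the sign tracking in the excision and the compatibility between the two natural $\muhat$-actions on the limit mixed Hodge structure (the geometric one coming from $e$ and the monodromy one coming from the degeneration), is the technical heart of the proof.
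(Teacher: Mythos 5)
The paper does not actually prove Proposition~\ref{hodgereal}; it is stated as a result of Stapledon~\cite{SFM} and Matsui--Takeuchi~\cite{MT}, and the paper only adds a remark that its $\muhat$-action convention is the inverse of theirs. So there is no in-paper proof to compare your proposal against, and the relevant comparison is with the cited sources.

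With that caveat, your sketch is broadly aligned with the strategy in those references: compactify in a toric variety adapted to a subdivision of the fan, stratify by torus orbits, apply motivic additivity, and carry out a character-refined Danilov--Khovanskii / Batyrev--Borisov lattice-point computation, with the $(u,v)$-refinement tied to the polyhedral subdivision $\calS_{\nu}$. Two points deserve more care. First, the excision line should read off the open strata $\overline{Z}\cap O_{\tau}$ of the compactification, not $\overline{Z}_{\tau}$ if the latter is meant to denote a closure; additivity is over the locally closed pieces of the torus-orbit stratification. Second, and more substantially, Proposition~\ref{hodgereal} is about Deligne's weight filtration on $H^{*}_{c}(Z^{\circ})$ of a fixed non-degenerate hypersurface equipped with a finite-order translation action, whereas the Clemens--Schmid machinery you invoke produces the monodromy/limit weight filtration of a one-parameter degeneration. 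The bridge between these two mixed Hodge structures is not an automatic consequence of "bookkeeping over $\calS_{\nu}$"; identifying the geometric cyclic action with the degeneration monodromy, and showing that the Katz--Stapledon local $h^{*}$-invariants correctly encode Deligne's weight filtration under this identification, is precisely the nontrivial content of Stapledon's theorem. Your sketch flags this step but treats it as a computation to verify rather than as the theorem's central input, which understates where the real work lies.
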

Let us remark that in \cite{SFM} and \cite{MT}
for an algebraic variety $X$ with a $\mu_{m}$-action for some $m\in\ZZ_{\geq 1}$ the authors endowed $H^{j}_{c}(X;\QQ)$
with the inverse automorphism of the one which is defined by $(\ref{actionl})$ in Section~\ref{subsecmot}.
Therefore, Proposition~\ref{hodgereal} is slightly different from the original one in \cite{SFM} and \cite{MT}.

Let $f(x)\in\nPoly$ be a non-constant polynomial such that $f(0)=0$ and assume that $f$ is non-degenerate at $0$.
We denote by $P$ the convex hull $\Conv{(\Ntbd\cup\{0\})}$ of $\Ntbd\cup\{0\}$ in $\RR^n$ and
define a piecewise $\QQ$-affine function $\nu$ on $P$ which takes the value $0$ (resp. $1$)
at the origin $0\in\RR^n$ (resp. on $\Conv{(\Ntbd)}$)
such that for any compact face $F$ of $\Ntph$ the restriction $\nu_{F}$ of $\nu$ to $\Delta_{F}$ is linear.
Moreover, for a compact face $F$ of $\Ntph$ let $0_{F}$ be the zero function on $F$.
Then by Theorem~\ref{motivicdesc} and Proposition~\ref{hodgereal}, we can calculate the Hodge realization of the motivic Milnor fiber $S_{f,0}$, and we can describe $E_{\lambda}(F_{f,0};u,v)$ as follows.

\begin{cor}\label{epolycomp1} 
In the situation as above,
for $\lambda\in\CC$ we have
\begin{align*}
uvE_{\lambda}(F_{f,0};u,v)=\sum_{F\prec\Ntph\colon \mbox{\tiny compact}}(-1)^{\dim F}
\left\{
(1-uv)^{s_{F}-\dim{F}}h^*_{\lambda}(F,0_{F};u,v)+\right.\\
\left.(1-uv)^{s_{F}-\dim{F}-1}h^*_{\lambda}(\Delta_{F},\nu_{F};u,v)\right\},
\end{align*}
where in the sum $\Sigma$ the face $F(\neq \emptyset)$ ranges through the compact ones of $\Ntph$.
\end{cor}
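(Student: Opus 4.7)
The plan is to apply the $\lambda$-weighted Hodge realization $E_\lambda(\,\cdot\,;u,v)$ to both sides of the motivic identity for $\calS_{f,0}$ provided by Theorem~\ref{motivicdesc}, and then invoke Theorem~\ref{HOREAL} to identify the left-hand side with $E_\lambda(F_{f,0};u,v)$. Since $\LL=[\CC\circlearrowleft\muhat]$ carries the trivial $\muhat$-action, a binomial expansion of $(1-\LL)^{s_F-\dim F-1}$ combined with additivity of $E_\lambda$ and K\"unneth for compactly supported cohomology replaces this factor with $(1-uv)^{s_F-\dim F-1}$ inside each summand; the same reasoning handles the extra $(1-\LL)$ multiplying $[Z^{\circ}_F\circlearrowleft\muhat]$.

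Next, I would compute $E_\lambda$ of the two hypersurface classes using Proposition~\ref{hodgereal}. For $[Z^{\circ}_F\circlearrowleft\muhat]$ the ambient torus $\SPEC\CC[\ZZ^n\cap\Aff F]\simeq\CS^{\dim F}$ has dimension $\dim F$, the Newton polytope of $f_F$ is $F$ itself (full-dimensional in $\Aff F$), and the monodromy action is trivial (corresponding to $\nu=0_F$); for $[Z^{\circ}_{\Delta_F}\circlearrowleft\muhat]$ the ambient torus has dimension $\dim F+1$, the Newton polytope of $\tl{f_{\Delta_F}}$ is $\Delta_F$, and the action is by multiplication by $e_F=\exp(2\pi\sqrt{-1}\,\nu_F(\,\cdot\,))$. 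Substituting these data into Proposition~\ref{hodgereal} yields
\begin{align*}
uv\cdot E_\lambda([Z^{\circ}_F\circlearrowleft\muhat];u,v) &= \epsilon(\lambda)(uv-1)^{\dim F}+(-1)^{\dim F+1}h^{*}_\lambda(F,0_F;u,v),\\
uv\cdot E_\lambda([Z^{\circ}_{\Delta_F}\circlearrowleft\muhat];u,v) &= \epsilon(\lambda)(uv-1)^{\dim F+1}+(-1)^{\dim F}h^{*}_\lambda(\Delta_F,\nu_F;u,v).
\end{align*}

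Multiplying the image of Theorem~\ref{motivicdesc} under $E_\lambda$ by $uv$ and inserting the two expressions above, within each face contribution the two $\epsilon(\lambda)$-summands cancel, thanks to the identity $(1-uv)(uv-1)^{\dim F}=-(uv-1)^{\dim F+1}$. What remains is a combination of the two $h^{*}_\lambda$-terms which, after collecting the powers of $(1-uv)$ and the common sign $(-1)^{\dim F}$, reproduces the formula asserted in the corollary.

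The only nontrivial verification, and the main potential obstacle, is checking the hypotheses of Proposition~\ref{hodgereal} for $\tl{f_{\Delta_F}}=f_{\Delta_F}-1$: namely that $\tl{f_{\Delta_F}}$ is non-degenerate as a Laurent polynomial on the torus of $\Aff\Delta_F$ with Newton polytope equal to $\Delta_F$. This follows from the non-degeneracy of $f$ at $0$ by inspecting each face of $\Delta_F$: it is either a face $G\prec F$, on which the restriction of $\tl{f_{\Delta_F}}$ equals $f_G$ (non-degenerate by hypothesis), or a face of the form $\Conv(G\cup\{0\})$, on which the restriction is $f_G-1$; non-degeneracy of the latter follows by a standard argument from that of $f_G$, exactly as in the convenient case treated in Matsui--Takeuchi~\cite{MT}. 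All remaining steps are purely formal algebraic manipulations of generating-function identities.
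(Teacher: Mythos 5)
Your route is exactly the paper's: the corollary is obtained by applying $E_{\lambda}(\ \cdot\ ;u,v)$ to the identity of Theorem~\ref{motivicdesc}, identifying $E_{\lambda}(\calS_{f,0};u,v)$ with $E_{\lambda}(F_{f,0};u,v)$ via Theorem~\ref{HOREAL}, and evaluating the two torus-hypersurface classes by Proposition~\ref{hodgereal}. Your handling of the factors $(1-\LL)$ (K\"unneth with a trivially acted factor), your check that $\tl{f_{\Delta_{F}}}$ is non-degenerate with Newton polytope $\Delta_{F}$, and your two displayed applications of Proposition~\ref{hodgereal} are all correct.

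The problem is the final collection step. Substituting your two displayed identities into the $E_{\lambda}$-image of Theorem~\ref{motivicdesc}, the $\epsilon(\lambda)$-terms indeed cancel, but the surviving $h^{*}$-terms do \emph{not} carry a common sign $(-1)^{\dim F}$: the $F$-term appears with $(-1)^{\dim F+1}(1-uv)^{s_{F}-\dim F}$ while the $\Delta_{F}$-term appears with $(-1)^{\dim F}(1-uv)^{s_{F}-\dim F-1}$. Hence what your computation actually establishes is
\begin{align*}
uvE_{\lambda}(F_{f,0};u,v)=\sum_{F\prec\Ntph\colon \mbox{\tiny compact}}(-1)^{\dim F}
\Bigl\{-(1-uv)^{s_{F}-\dim{F}}h^*_{\lambda}(F,0_{F};u,v)
+(1-uv)^{s_{F}-\dim{F}-1}h^*_{\lambda}(\Delta_{F},\nu_{F};u,v)\Bigr\},
\end{align*}
which differs from the printed statement in the sign of the first summand. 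The discrepancy is invisible for $\lambda\neq 1$ (then $h^*_{\lambda}(F,0_{F};u,v)=0$), but it matters for $\lambda=1$: e.g.\ for $f=x_{1}+x_{2}$ one has $uvE_{1}(F_{f,0};u,v)=uv$, and the minus-sign version gives $uv$, whereas the formula as printed gives $2-uv$; one checks likewise that Theorem~\ref{motivicdesc} and Proposition~\ref{hodgereal} are internally consistent in this example, so the defect is not in those ingredients. Consequently you should either flag the sign on the $h^*_{\lambda}(F,0_{F};u,v)$-term in the printed corollary as a typo (in which case your derivation is complete), or, if the statement is taken at face value, your proof fails exactly at the sentence asserting that the two terms collect with a common sign $(-1)^{\dim F}$ — that assertion contradicts your own preceding formulas and cannot be repaired without changing the statement.
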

Note that if $\lambda\neq 1$, the polynomial $h^*_{\lambda}(F,0_{F};u,v)$ is zero. 
The coefficient of $u^{p}v^{q}$ in $E_{\lambda}(F_{f,0};u,v)$ being an alternating sum of $h^{p,q}_{\lambda}(H^{j}(F_{f,0};\CC))$, for each $j\in\ZZ$ we can not always compute $h^{p,q}_{\lambda}(H^{j}(F_{f,0};\CC))$ by the formula in Corollary~\ref{epolycomp1}.
Recall that if $0\in V$ is an isolated singular point, we have $H^{j}(F_{f,0};\CC)=0$ unless $j=0$ or $n-1$,
and $h^{p,q}_{\lambda}(H^{0}(F_{f,0};\CC))=0$ unless $\lambda=1$ and $(p,q)= (0,0)$.
Therefore, in this case, we can compute each $h^{p,q}_{\lambda}(H^{n-1}(F_{f,0};\CC))$ by our formula.
Even if $f$ is not convenient (in this case, $0\in\zset$ may be a non-isolated singular point),
we will show later that for ``good'' eigenvalues we have $H^{j}(F_{f,0};\CC)_{\lambda}=0\ (j\neq n-1)$
and we can compute $h^{p,q}_{\lambda}(H^{n-1}(F_{f,0};\CC))$ (see Theorem~\ref{main1}).

Let us explain a symmetry of $E_{\lambda}(F_{f,0};u,v)$.
\begin{defi}\label{admissible}
We say that a compact face $F(\neq \emptyset)$ of $\Ntph$ is \textit{extremal} if there exists a non-compact face $G$ of $\Ntph$ such that $F\prec G$ and $G$ is not contained in the boundary $\partial\RRpn$ of $\RRpn$.
If a compact face $F\prec \Ntph$ is not extremal, we say that $F$ is \textit{admissible}.
\end{defi}

\begin{defi}\label{Rf} 
For a non-constant polynomial $f(x)\in\nPoly$ such that $f(0)=0$,
we define a finite subset $R_{f}$ of $\CC$ by
\[R_{f}:=\bigcup_{F\prec \Ntph \sumcom{extremal}}\{\lambda\in\CC\ |\ \lambda^{d_{F}}=1\},\]
where $F$ ranges through the extremal compact faces of $\Ntph$ and
$d_{F}\in\ZZp$ is the lattice distance of $F$ from the origin $0\in\RR^n$.
\end{defi}

\begin{ex}\label{exrf}
Consider the case where $n=2$.
Let $f$ be the polynomial $f(x_{1},x_{2})=x_{1}^{7}+x_{1}^{3}x_{2}+x_{1}^{2}x_{2}^{4}$.
Then the Newton polyhedron $\Ntph\subset \RR^2$ of $f$ is as in Fig.~\ref{fig2}.
The union of the bold lines in $\Ntph$ in it is the Newton boundary $\Ntbd$ of $f$.
In this case, the only extremal face of $\Ntph$ is the $0$-dimensional face $\{(2,4)\}$.
Its lattice distance from the origin $(0,0)$
is $2$.
Hence, we have 
\[R_{f}=\{1,-1\}.\]

\begin{figure}[htbp]
\begin{tikzpicture}[scale = 0.8] 
\draw[help lines] (-1,-1) grid (9,6);
\draw[fill,black, opacity=.5] (9,0) --(7,0)--(3,1)--(2,4)--(2,6)--(9,6)--cycle;
\draw[very thick] (2,4)--(3,1)--(7,0);
\draw[->] (0,-1)--(0,6);
\draw[->] (-1,0)--(9,0);
\draw (0,0) node[below left]{(0,0)};
\draw (7,0) node[below]{(7,0)};
\draw (3,1) node[below left]{(3,1)};
\draw (2,4) node[left]{(2,4)};
\draw (6,4) node{\large $\Gamma_{+}(f)$};
\draw (2,2) node{\large $\Gamma_{f}$};
\filldraw (7,0) circle (3pt);
\filldraw (3,1) circle (3pt);
\filldraw (2,4) circle (3pt);
\end{tikzpicture}
\caption{The Newton polyhedron of $f=x_{1}^{7}+x_{1}^{3}x_{2}+x_{1}^{2}x_{2}^{4}$}\label{fig2}
\end{figure}
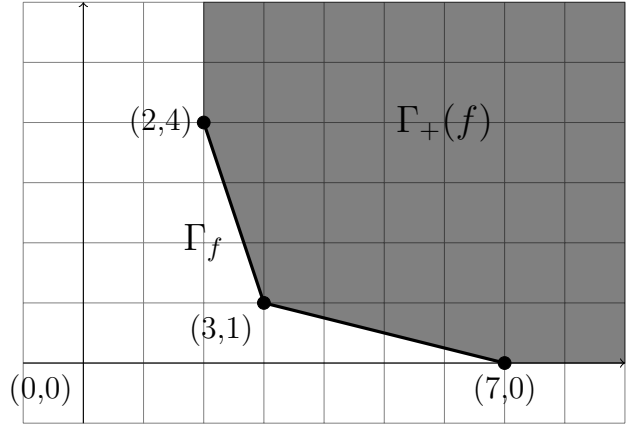

\end{ex}

If $f$ is convenient and $\lambda\neq 1$,
then the weight filtration on $H^{n-1}(F_{f,0})$ is the monodromy filtration and we have
\[h^{p,q}_{\lambda}(H^{n-1}(F_{f,0};\CC))=h^{n-1-q,n-1-p}_{\lambda}(H^{n-1}(F_{f,0};\CC)),\]
for any $p,q\in\ZZ$.
Hence in this case, we have 
\[E_{\lambda}(F_{f,0};u,v)=(uv)^{n-1}E_{\lambda}(F_{f,0};v^{-1},u^{-1}).\]
However, in the case where $f$ is not convenient, this symmetry
does not hold in general.
Nevertheless, by Corollary~\ref{epolycomp1} and some properties in Proposition~\ref{hformula} of the $h^*$-polynomials we can easily see the following results.

\begin{prop}\label{propsymEpoly}
Assume that $f(x)\in\nPoly$ is non-degenerate at $0$ and is not convenient
and $\dim{P}=n$.
Then for any $\lambda\notin R_{f}$ we have
\[
uvE_{\lambda}(F_{f,0};u,v)
=(-1)^{n-1}l^{*}_{\lambda}(P,\nu;u,v).
\]
In particular, for $\lambda\notin R_{f}$ we have the symmetry
\[E_{\lambda}(F_{f,0};u,v)=(uv)^{n-1}E_{\lambda}(F_{f,0};v^{-1},u^{-1}).\]
\end{prop}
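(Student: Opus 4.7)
The plan is to start from Corollary~\ref{epolycomp1}, eliminate the terms that vanish for $\lambda\notin R_f$, rewrite the remainder as $(-1)^{n-1}l^{*}_{\lambda}(P,\nu;u,v)$, and then invoke Proposition~\ref{hformula}(ii) to deduce the symmetry.

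First I would observe that since $f$ is not convenient, there exists at least one non-compact face of $\Ntph$ not contained in $\partial\RRpn$; hence at least one extremal compact face $F$ exists, and because $1^{d_F}=1$ we obtain $1\in R_f$. In particular $\lambda\notin R_f$ forces $\lambda\neq 1$. The zero function $0_F$ on a compact face $F$ satisfies $\exp(2\pi\sqrt{-1}\cdot m\cdot 0_F(v/m))=1$, so for $\lambda\neq 1$ every weight $w_\lambda$ entering $h^{*}_{\lambda}(F,0_F;u,v)$ vanishes and the corresponding summand is zero. Corollary~\ref{epolycomp1} therefore reduces to
\[uvE_{\lambda}(F_{f,0};u,v)=\sum_{F\prec\Ntph\colon\textnormal{compact}}(-1)^{\dim F}(1-uv)^{s_F-\dim F-1}h^{*}_{\lambda}(\Delta_F,\nu_F;u,v).\]

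Next I would show that, for $\lambda\notin R_f$, the summand indexed by an extremal compact face $F$ vanishes. The linear function $\nu_F$ on $\Aff(\Delta_F)$ takes values in $(1/d_F)\ZZ$ on $\ZZ^n\cap\Aff(\Delta_F)$, so $\exp(2\pi\sqrt{-1}\,\nu_F(v))$ is a $d_F$-th root of unity for every lattice point $v$. Extremality of $F$ places $\{\mu\in\CC\mid\mu^{d_F}=1\}$ inside $R_f$, so $w_\lambda(v)\equiv 0$ on every face $Q\prec\Delta_F$. Unwinding the definitions, $h^{*}_{\lambda}(Q,\nu_F|_Q;u)=0$ for each $Q\prec\Delta_F$, hence $l^{*}_{\lambda}(F',\nu_F|_{F'};u)=0$ for each $F'\prec\Delta_F$ (the convention $h^{*}_{\lambda}(\emptyset,\cdot)=0$ for $\lambda\neq 1$ covers the empty face), and finally $h^{*}_{\lambda}(\Delta_F,\nu_F;u,v)=0$ via Definition~3.5(iii). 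Only admissible compact faces can contribute.

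To identify the surviving sum as $(-1)^{n-1}l^{*}_{\lambda}(P,\nu;u,v)$, I would work inside the polyhedral subdivision $\calS_{\nu}$ of $P$. Its top-dimensional cells are the $\Delta_F$ for admissible compact facets of $\Ntph$, and every cell of $\calS_\nu$ is a face of such a $\Delta_F$. Expanding each $h^{*}_{\lambda}(\Delta_F,\nu_F;u,v)$ via Proposition~\ref{hformula}(iii) and regrouping the double sum by cells $F'\in\calS_\nu$, the prefactor $(1-uv)^{s_F-\dim F-1}$, which encodes the codimension of $\Aff(\Delta_F)$ in $\RR^n$, combines with the $g$-polynomial factors to reassemble the local $h$-polynomials $l_{P}(\calS_\nu,F';uv)$ appearing in the definition of $l^{*}_{\lambda}(P,\nu;u,v)$. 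This yields the claimed identity. For the symmetry, Proposition~\ref{hformula}(ii) with $\dim P=n$ gives $l^{*}_{\lambda}(P,\nu;u,v)=(uv)^{n+1}l^{*}_{\lambda}(P,\nu;v^{-1},u^{-1})$; substituting into the identity and cancelling $uv$ yields $E_{\lambda}(F_{f,0};u,v)=(uv)^{n-1}E_{\lambda}(F_{f,0};v^{-1},u^{-1})$.

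The hard part is the combinatorial matching in the third step. One has to carefully track which compact faces $F\prec\Ntph$ generate each cell of $\calS_\nu$, rearrange the $(1-uv)^{s_F-\dim F-1}$ weights into the local $h$-polynomials $l_P(\calS_\nu,\cdot;uv)$ via the $g([Q,P]^*;t)$-factors, and verify that no spurious contributions remain from cells on $\partial P$ lying inside faces of $\Gamma_f$.
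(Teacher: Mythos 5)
Your steps 1, 2 and 4 are correct, and in fact your vanishing argument is more explicit than anything the paper gives (the paper only says the result ``easily'' follows from Corollary~\ref{epolycomp1} and Proposition~\ref{hformula}). You rightly observe that a non-convenient $f$ forces $1\in R_{f}$ so $\lambda\neq 1$, which kills all the $h^{*}_{\lambda}(F,0_{F};u,v)$ terms; and for an extremal $F$ the linear form $\nu_{F}$ takes values in $\frac{1}{d_{F}}\ZZ$ on $\ZZ^{n}\cap\Aff(\Delta_{F})$, so every relevant $\exp(2\pi\sqrt{-1}\,\nu_{F}(v))$ is a $d_{F}$-th root of unity, hence $w_{\lambda}\equiv 0$ when $\lambda\notin R_{f}$, and the whole chain $f_{\lambda}\Rightarrow h^{*}_{\lambda}\Rightarrow l^{*}_{\lambda}\Rightarrow h^{*}_{\lambda}(\Delta_{F},\nu_{F};u,v)=0$ goes through as you wrote. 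The symmetry step via Proposition~\ref{hformula}(ii) and $\dim P=n$ is also fine.

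The gap is in step 3. First, a factual point: the top-dimensional cells of $\calS_{\nu}$ are $\Delta_{F}$ for \emph{all} compact facets $F$ of $\Ntph$, extremal ones included, and not every cell of $\calS_{\nu}$ is a face of an admissible $\Delta_{F}$ --- so after you drop extremal faces, your regrouping ``by cells $F'\in\calS_{\nu}$'' no longer ranges over all of $\calS_{\nu}$, and you would additionally have to argue that $l^{*}_{\lambda}(F',\nu|_{F'};uv^{-1})=0$ for any cell only reached through extremal $\Delta_{F}$'s. Second, applying Proposition~\ref{hformula}(iii) to each individual $h^{*}_{\lambda}(\Delta_{F},\nu_{F};u,v)$ is vacuous since $\nu_{F}$ is affine on $\Delta_{F}$, so $\calS_{\nu_{F}}$ is the trivial subdivision; the expansion you actually need is the one for the trivial subdivision, $h^{*}_{\lambda}(\Delta_{F},\nu_{F};u,v)=\sum_{F'\prec\Delta_{F}}v^{\dim F'+1}l^{*}_{\lambda}(F',\nu_{F}|_{F'};uv^{-1})g([F',\Delta_{F}];uv)$, followed by an identity of the form
\[\sum_{F\colon\Delta_{F}\supset F'}(-1)^{\dim F}(1-uv)^{s_{F}-\dim F-1}g([F',\Delta_{F}];uv)=(-1)^{n-1}l_{P}(\calS_{\nu},F';uv),\]
which you neither state nor prove. (Your parenthetical that $(1-uv)^{s_{F}-\dim F-1}$ ``encodes the codimension of $\Aff(\Delta_{F})$ in $\RR^{n}$'' is also off: the exponent $s_{F}-\dim\Delta_{F}$ is the codimension of $\Aff(\Delta_{F})$ inside the coordinate subspace $\RR^{I_{F}}$, not inside $\RR^{n}$.) The paper's intended route is cleaner here: apply Proposition~\ref{hformula}(iii) once to $P$ (with its nontrivial subdivision $\calS_{\nu}$), which identifies $\sum_{\sigma(\Delta_{F})=P}(-1)^{n-1}(uv-1)^{n-\dim F-1}h^{*}_{\lambda}(\Delta_{F},\nu_{F};u,v)$ with $(-1)^{n-1}h^{*}_{\lambda}(P,\nu;u,v)$, and then the remaining terms living on $\partial P$ are absorbed into $l^{*}_{\lambda}(P,\nu;u,v)$ via the $h^{*}$-to-$l^{*}$ relation of Stapledon. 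Your sketch is plausible in spirit, but as written it does not close, and the prefactor/cell bookkeeping is precisely where it would break.
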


\section{Main theorem}\label{chapmain}
Let $f(x)\in\nPoly$ be a non-constant polynomial such that $f(0)=0$ and set $V:=f^{-1}(0)\subset \CC^n$ as before.
Throughout this section, we assume that $f$ is non-degenerate at $0$ (see Definition~\ref{nondegatzero}).
Since our assertion below will become trivial if $\dim{P}(=\dim{\Conv(\Ntbd\cup\{0\})})<n$,
in what follows we assume also that $\dim{P}=n$.
Note that we do not assume $f$ is convenient here, and therefore the origin $0\in\CC^n$ may be a non-isolated singular point of $V$ in general.
So we can not expect to have the concentration
$\tl{H}^{j}(F_{f,0};\CC)\simeq 0\ (j\neq n-1)$.
However, we will prove the following result.

\begin{thm}\label{main1}
In the situation as above, for any $\lambda\notin R_{f}$ (see Definition~\ref{Rf}) we have a concentration
\[\tl{H}^{j}(F_{f,0};\CC)_{\lambda}=0\qquad(j\neq n-1).\]
\end{thm}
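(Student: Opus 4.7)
My plan is to translate the claim into a statement about the stalk of a perverse sheaf and then analyse it through a toric log resolution. Via the canonical identification $H^j(F_{f,0};\CC)_\lambda \cong H^{j-n+1}((\nearlam)_0)$, the concentration assertion becomes the statement that $(\nearlam)_0$ is cohomologically concentrated in degree $0$ for $\lambda\notin R_f$. When $f$ is convenient, $R_f=\emptyset$ and the theorem reduces to Milnor's classical result; otherwise $1\in R_f$ by the very definition of $R_f$, so we may restrict to $\lambda\neq 1$, in which case $\nearlam$ coincides with the vanishing cycle complex ${}^p\phi_{f,\lambda}(\CC_{\CC^n}[n])$.

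Next I would take a smooth subdivision $\Sigma$ of the dual fan $\Sigma_{0}$ of $\Ntph$ refining $\Sigma_{1}$, and let $\pi\colon X_{\Sigma}\to\CC^{n}$ be the proper toric birational morphism associated to it, as in Section~\ref{chapmot}. The non-degeneracy of $f$ at $0$ guarantees that $\tl{f}:=f\circ\pi$ cuts out a simple normal crossing divisor in a neighborhood of $\pi^{-1}(0)$. Proper base change for the nearby cycle functor (together with the fact that $\pi$ is an isomorphism over the complement of the torus-fixed point $0\in\CC^{n}$) lets me compute $(\nearlam)_{0}$ in terms of the hypercohomology of $\pi^{-1}(0)$ with coefficients in the perverse sheaf ${}^{p}\psi_{\tl{f},\lambda}(\CC_{X_{\Sigma}}[n])$, modulo an explicit correction coming from $\Dpp\CC_{X_{\Sigma}}\neq \CC_{\CC^{n}}$ which lives in the $\lambda=1$ summand and hence disappears under our assumption. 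On the normal-crossing side, ${}^{p}\psi_{\tl{f},\lambda}(\CC_{X_{\Sigma}}[n])$ has non-zero stalks on a stratum $E_{I}^{\circ}$ only when $\lambda^{m_{I}}=1$, where $m_{I}=\gcd_{i\in I}m_{i}$.

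The combinatorial core is then to match the multiplicities $m_{\rho}$ along toric divisors $E_{\rho}$ to the lattice distances $d_{F_{\rho}}$ of the corresponding faces $F_{\rho}\prec\Ntph$: using the duality between $\Sigma_{0}$ and $\Ntph$, one shows that $d_{F_{\rho}}$ always divides $m_{\rho}$, and more strongly that any gcd $m_{I}$ taken over the rays of a cone of $\Sigma$ supported in $\pi^{-1}(0)$ is divisible by $d_{F}$ for the smallest face $F$ of $\Ntph$ cut out by $\sigma$. Consequently, if such an $F$ is extremal then $\lambda^{m_{I}}=1$ already forces $\lambda^{d_{F}}=1$, i.e.\ $\lambda\in R_{f}$. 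Hence for $\lambda\notin R_{f}$ only strata indexed by admissible compact faces of $\Ntph$ can contribute to the $\lambda$-part, and the support of the $\lambda$-part inside $\pi^{-1}(0)$ is a proper union of such ``admissible'' toric closures.

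The desired concentration is then extracted from two matching bounds: the upper bound $H^{j}=0$ for $j>0$ of the stalk is the standard support condition for a perverse sheaf, while the matching lower bound $H^{j}=0$ for $j<0$ is obtained by invoking the polarizable mixed Hodge module structure on $\psi_{\tl{f},\lambda}(\CC_{X_{\Sigma}}[n])$ (whose $\lambda$-part is pure of the expected weight) combined with the decomposition theorem applied to $\pi$ and a Hard-Lefschetz-style argument on the admissible-face components. The main obstacle I expect is precisely this last step: carefully tracking the Hodge weights on the admissible contributions and checking that cancellations among the IC-components produced by the decomposition theorem do not shift the concentration away from degree $0$; the combinatorial divisibility $d_{F}\mid m_{I}$ of the third paragraph, while elementary, also requires care to set up in the presence of a general smooth refinement of $\Sigma_{0}$.
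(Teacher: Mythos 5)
Your setup — identifying $\tl{H}^{j}(F_{f,0};\CC)_{\lambda}$ with the cohomology of the stalk of $\nearlam$, passing through the toric log resolution $\pi\colon X_{\Sigma}\to\CC^{n}$, and observing that for $\lambda\notin R_{f}$ the combinatorics (the divisibility $d_{F}\mid m_{\rho}$, which is indeed correct) restricts the support to admissible strata — is in line with the paper. But the decisive step, the lower bound $H^{j}=0$ for $j<0$, is handled by the paper in a completely different and much cleaner way than you propose, and your route has a genuine gap. The paper's key lemma is Proposition~\ref{compare1prop}: the natural morphism
\[
j_{0}^{!}\bigl({}^p\psi_{f,\lambda}(\CC_{\CC^n}[n])\bigr)\longrightarrow j_{0}^{-1}\bigl({}^p\psi_{f,\lambda}(\CC_{\CC^n}[n])\bigr)
\]
is an isomorphism, equivalently the local cohomology $(\DR\Gamma_{V\setminus\{0\}}\psi_{f,\lambda}(\CC_{\CC^n}))_{0}$ vanishes. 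Once that is known, the concentration is automatic: $j_{0}^{-1}$ is right $t$-exact and $j_{0}^{!}$ is left $t$-exact for the perverse $t$-structure, so as soon as they coincide on a perverse sheaf the result is concentrated in degree $0$. No decomposition theorem, no Hard Lefschetz, no purity is needed for Theorem~\ref{main1}.

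Your proposed substitute for the lower bound has a concrete problem. The $\lambda$-part of the nearby cycle Hodge module $\psi_{f}^{H}(\QQ^{H}_{\CC^n}[n])$ is not pure — its weight filtration has many nonzero graded pieces, which is precisely the content that makes Theorem~\ref{main2} nontrivial — so the decomposition theorem does not apply to $\DR\pi'_{*}$ of it directly. Even after passing to $\GR^{W}$, the restriction to the point $0$ of an IC summand is in general spread over several degrees, and a Hard-Lefschetz argument does not by itself force concentration. The missing ingredient is the vanishing of the local cohomology at $0$, which in the paper is proved stratum by stratum: using the primitive decomposition of ${}^p\psi_{f\circ\pi,\lambda}(\CC_{X_{\Sigma}}[n])$ into pieces ${i_{E_{I}}}_{*}{j_{I}}_{!}j_{I}^{-1}\calF_{\lambda,I}[n-|I|]$, and then, for each cone $\sigma$ with condition $(\star)$, exhibiting a ray $\rho_{i}\prec\sigma'$ of an auxiliary larger cone $\sigma'\supsetneq\sigma$ with $\lambda^{m_{\rho_{i}}}\neq 1$ (this is where Lemma~\ref{hullsingmaeq} and the assumption $\lambda\notin R_{f}$ enter). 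That ray produces a rank-one factor with nontrivial monodromy on the $\CS^{s_{3}}$-factor of $T_{\sigma}\setminus Z$, so the compactly supported cohomology of the stratum vanishes by K\"unneth. Merely noting that only admissible faces contribute is not enough; you have to run this finer analysis to kill the local cohomology, and then the $t$-exactness argument replaces your entire final paragraph.
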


By this theorem and Theorem~\ref{Varform},
we obtain the following corollary.
\begin{cor}\label{multico}
In the situation of Theorem~\ref{main1},
for any $\lambda\notin R_{f}$ the multiplicity of the eigenvalue $\lambda$ in the Milnor monodromy $\Phi_{n-1}$ is equal to that of the factor $(1-\lambda t)$ in a rational function
\[\prod_{\emptyset\neq I\subset \{1,\dots,n\}}\prod_{i=1}^{k_{I}}(1-t^{d_{I,i}})^{(-1)^{n-|I|}\mathrm{Vol}_{\ZZ}(\Gamma_{I,i})}.\]
For the definitions of $k_{I}$, $\Gamma_{I,i}$, $d_{I,i}$ and $\mathrm{Vol}_{\ZZ}(\Gamma_{I,i})$, see Section~\ref{chapmil}.   
\end{cor}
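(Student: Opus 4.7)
The plan is to derive Corollary~\ref{multico} as a direct consequence of Theorem~\ref{main1} combined with Varchenko's formula (Theorem~\ref{Varform}). The strategy is a straightforward comparison of the multiplicities of the cyclotomic factor $(1 - \lambda t)$ in two different expressions for the monodromy zeta function $\zeta_{f,0}(t)$.

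The first key step is to use Theorem~\ref{main1} to simplify $\zeta_{f,0}(t) = \prod_{j}\det(\Id - t\Phi_{j})^{(-1)^{j}}$ with respect to the given $\lambda$. Note that whenever $R_{f}\neq\emptyset$ (the only case where the corollary has nontrivial content), one has $1\in R_{f}$, so $\lambda\notin R_{f}$ forces $\lambda\neq 1$. Theorem~\ref{main1} then gives $H^{j}(F_{f,0};\CC)_{\lambda} = 0$ for all $j\neq n-1$: for $j\geq 1$ this is the direct vanishing $\tilde{H}^{j}(F_{f,0};\CC)_{\lambda} = 0$, while for $j=0$ it follows from the observation that the constant cohomology class lies in the $\lambda=1$ eigenspace of $\Phi_{0}$. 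Consequently, $\lambda$ appears as an eigenvalue only in $\Phi_{n-1}$, so the order of the factor $(1 - \lambda t)$ in $\zeta_{f,0}(t)$ equals $(-1)^{n-1}m_{\lambda}$, where $m_{\lambda}$ denotes the multiplicity of $\lambda$ as an eigenvalue of $\Phi_{n-1}$.

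The second key step is to compute the same order from Varchenko's formula. Each polynomial $(1 - t^{d_{I,i}})$ factors over $\CC$ as $\prod_{\zeta^{d_{I,i}} = 1}(1 - \zeta t)$, so the factor $(1 - \lambda t)$ appears with multiplicity one in the $(I,i)$-block precisely when $\lambda^{d_{I,i}} = 1$. Hence the order of $(1 - \lambda t)$ in $\zeta_{f,0}(t)$ also equals $\sum_{(I,i)\colon \lambda^{d_{I,i}} = 1}(-1)^{|I|-1}\Vol_{\ZZ}(\Gamma_{I,i})$. Equating the two expressions for this order and multiplying through by $(-1)^{n-1}$ yields
\[ m_{\lambda} \;=\; \sum_{(I,i)\colon\lambda^{d_{I,i}}=1}(-1)^{n-|I|}\Vol_{\ZZ}(\Gamma_{I,i}), \]
which is precisely the multiplicity of $(1 - \lambda t)$ in the rational function $\prod_{I,i}(1 - t^{d_{I,i}})^{(-1)^{n-|I|}\Vol_{\ZZ}(\Gamma_{I,i})}$, as claimed.

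The main obstacle is essentially nonexistent once Theorem~\ref{main1} is in hand: the proof reduces to bookkeeping of cyclotomic factorizations and signs. The only minor subtlety is the verification that $H^{0}(F_{f,0};\CC)_{\lambda} = 0$ for $\lambda\neq 1$, which follows immediately from the remark that the constant class contributes only to the trivial eigenspace of $\Phi_{0}$. After this observation, the corollary is obtained by straightforwardly combining the two reductions above.
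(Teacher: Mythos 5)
Your proof is correct and fills in exactly the argument the paper intends (the paper only states that the corollary follows from Theorem~\ref{main1} together with Theorem~\ref{Varform}, without spelling out the comparison of orders of $(1-\lambda t)$ on the two sides). The sign bookkeeping, the cyclotomic factorization of each $(1-t^{d_{I,i}})$, and the reduction of $H^{0}(F_{f,0};\CC)_{\lambda}$ to $\tl{H}^{0}(F_{f,0};\CC)_{\lambda}$ for $\lambda\neq 1$ are all handled correctly.

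One small remark: your parenthetical that $R_{f}\neq\emptyset$ is ``the only case where the corollary has nontrivial content'' is a little loose. When $R_{f}=\emptyset$ the corollary still makes a claim (including at $\lambda=1$), and for $\lambda=1$ the extra factor $\det(\Id-t\Phi_{0})=1-t$ would perturb the count by $\pm1$, so your argument (and indeed the literal statement of the corollary) only applies once $\lambda\neq 1$. It would be cleaner to simply observe that $1\in R_{f}$ whenever $R_{f}\neq\emptyset$, and that the paper implicitly works in the non-convenient setting where this holds; then $\lambda\notin R_{f}$ forces $\lambda\neq 1$ and the computation goes through as you wrote.
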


For the proof of Theorem~\ref{main1},
we need the following proposition.
\begin{prop}\label{compare1prop}
In this situation of Theorem~\ref{main1}, for any $\lambda\notin R_{f}$, $k\in\ZZ$
and the inclusion map $j_{0}\colon\{0\}\hlr V$
the natural morphism in $\DBC{\{0\}}=\DB{\mathrm{Mod}(\CC)}\colon$ 
\begin{align*}
j_{0}^{!}({}^p\psi_{f,\lambda}(\CC_{\CC^n}[n]))\simar j_{0}^{-1}({}^p\psi_{f,\lambda}(\CC_{\CC^n}[n]))
\end{align*}
is an isomorphism.
\end{prop}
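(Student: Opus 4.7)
The strategy is to pass to the toric log resolution $\pi: X_{\Sigma} \to \CC^n$ coming from a smooth refinement $\Sigma$ of the dual fan $\Sigma_{0}$ of $\Ntph$, as set up in Section~\ref{chapmot}, and to exploit the combinatorics of the stratification of $\pi^{-1}(0)$ to show that for $\lambda\notin R_f$ the $\lambda$-part of the nearby cycle sheaf is supported on a compact subset of $X_\Sigma$.

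First, since $\pi$ is proper and induces an isomorphism away from $V$, the compatibility of nearby cycles with proper direct image gives
\[
{}^p\psi_{f,\lambda}(\CC_{\CC^n}[n]) \simeq R\pi_{*}\bigl({}^p\psi_{f\circ\pi,\lambda}(\CC_{X_{\Sigma}}[n])\bigr).
\]
Applying $j_{0}^{-1}$ and $j_{0}^{!}$ and invoking proper base change along $j_{0}$ together with the properness of $\pi|_{\pi^{-1}(0)}$, the canonical morphism in the statement is identified with
\[
R\Gamma_{\pi^{-1}(0)}\bigl(X_{\Sigma};{}^p\psi_{f\circ\pi,\lambda}(\CC_{X_{\Sigma}}[n])\bigr)\lr R\Gamma\bigl(\pi^{-1}(0);{}^p\psi_{f\circ\pi,\lambda}(\CC_{X_{\Sigma}}[n])\bigr),
\]
so the claim is reduced to showing that these two cohomologies agree.

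Second, since $(f\circ\pi)^{-1}(0)$ is a normal crossing divisor, the sheaf ${}^p\psi_{f\circ\pi,\lambda}(\CC_{X_{\Sigma}}[n])$ is described stratum by stratum through the $m_{I}$-fold coverings $\tl{E_{I}^{\circ}}$ of Proposition~\ref{fmot}; its $\lambda$-part on $E_{I}^{\circ}$ is nontrivial only when $\lambda^{m_{I}}=1$. The key combinatorial point is that for $\lambda\notin R_{f}$ the strata $E_{I}^{\circ}$ carrying a nonzero $\lambda$-part correspond only to subsets $I$ of rays of $\Sigma$ which are dual to \emph{admissible} compact faces of $\Ntph$ in the sense of Definition~\ref{admissible}. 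Indeed, if some ray $\rho\in I$ lay in the cone dual to an extremal compact face $F$, then the multiplicity $m_\rho$ along the corresponding component would divide $d_{F}$ (by the definition of the lattice distance and the support function of $\Ntph$), forcing $\lambda^{d_{F}}=1$ and contradicting $\lambda\notin R_{f}$.

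Third, the closure of the union of such admissible strata is a compact subset $Z_{\lambda}\subset \pi^{-1}(0)$, because admissibility precisely prevents the associated orbit closures from escaping along the rays of $\Sigma$ that meet $\partial\RRpn$, which are the only source of the non-compact part of $\pi^{-1}(0)$. On this compact support $R\Gamma$ and $R\Gamma_{c}$ coincide, and hence the comparison morphism displayed above is an isomorphism, completing the proof. The main obstacle will be the second step, namely the combinatorial verification relating the divisibility condition $\lambda^{m_{I}}=1$ to the defining condition of $R_{f}$ via extremal faces of $\Ntph$, and confirming that admissibility is exactly what is needed to rule out the non-compact contributions on $X_{\Sigma}$.
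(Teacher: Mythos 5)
There is a genuine gap, and it sits precisely where you flagged your own doubt. Your plan hinges on the claim that for $\lambda\notin R_{f}$ the $\lambda$-part ${}^p\psi_{f\circ\pi,\lambda}(\CC_{X_{\Sigma}}[n])$ is \emph{supported} on a compact subset of $X_{\Sigma}$, so that the comparison $j_{0}^{!}\simar j_{0}^{-1}$ follows for free. This is false in general. The support of $\psi_{f\circ\pi,\lambda}$ spreads along any exceptional divisor $E_{i}$ with $m_{\rho_{i}}>0$ and $\lambda^{m_{\rho_{i}}}=1$, and the condition $\lambda\notin R_{f}$ does not rule out such rays lying on $\partial\RR^n_{\geq 0}$ (which would make $E_{i}$ non-compact and its image under $\pi$ escape the origin). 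Also note $\pi^{-1}(0)$ is already compact since $\pi$ is proper; the real issue is the behaviour of the sheaf on $V'$ in a \emph{punctured neighbourhood} of $\pi^{-1}(0)$, which a support statement on $\pi^{-1}(0)$ alone cannot control.

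Your combinatorial justification also runs in the wrong direction. The correct relation (the paper's Lemma~\ref{hullsingmaeq}) is $m_{\sigma}=d_{F(\tl{\sigma})}$ for a cone $\sigma$ with $\dim\sigma=\dim\tl\sigma$, where $m_{\sigma}=\gcd_{\rho_{i}\prec\sigma}m_{\rho_{i}}$; in particular $d_{F}\mid m_{\rho}$, not $m_{\rho}\mid d_{F}$. From $\lambda^{m_{\rho}}=1$ and $d_{F}\mid m_{\rho}$ one cannot deduce $\lambda^{d_{F}}=1$. To reach a contradiction with $\lambda\notin R_{f}$ one must use the \emph{gcd} over all rays of a full-dimensional cone $\sigma$ dual to the face, not a single ray. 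This is exactly how the paper proceeds: assuming $\dim\sigma=\dim\tl\sigma$ forces $\lambda^{m_{\sigma}}=\lambda^{d_{F(\tl\sigma)}}\neq 1$, contradicting condition $(\star)$, so in fact $\dim\sigma<\dim\tl\sigma$ and there exists an extra ray $\rho_{i}\prec\sigma'\setminus\sigma$ with $\lambda^{m_{\rho_{i}}}\neq 1$. The vanishing then comes not from compact support but from the K\"unneth decomposition on $T_{\sigma}\setminus Z$, where this ray contributes a non-trivial rank-one local system on a $\CC^{*}$-factor whose cohomology vanishes. In short, the paper proves a \emph{cohomological} vanishing of $\DR\Gamma_{V\setminus\{0\}}(\psi_{f,\lambda}(\CC_{\CC^n}))_{0}$ via a primitive-decomposition of the weight filtration and a local-system computation, not a support-theoretic compactness argument; the latter does not hold.
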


\begin{proof}[Proof of Proposition~\ref{compare1prop}]
It suffices to show that for any $\lambda\notin R_{f}$ the morphism
\begin{align}\label{firsteq}
j_{0}^{!}\psi_{f,\lambda}(\CC_{\CC^n})\lr j_{0}^{-1}\psi_{f,\lambda}(\CC_{\CC^n})
\end{align}
is an isomorphism.
We denote by $\Sigma_{0}$ the normal fan of the Newton polyhedron $\Gamma_{+}(f)$.
For a subset $I\subset \{1,2,\dots,n\}$ we set $\RR^{I}_{\geq 0}:=\RR^I\cap\RR^n_{\geq 0}$.
We construct a smooth subdivision $\Sigma$ of $\Sigma_{0}$ without subdividing the cones in $\Sigma_{0}$ of the type $\RR^{I}_{\geq 0}.$
Let $X_{\Sigma}$ be the toric variety associated with $\Sigma$.
Recall that $X_{\Sigma}$ contains $T:=\CS^n$ as an open dense subset
and it acts naturally on $X_{\Sigma}$ itself.
For a cone $\sigma\in\Sigma$, we denote by $T_{\sigma}$ the $T$-orbit in $X_{\Sigma}$ associated with it.
Let $\Sigma_{1}$ be the fan formed by all the faces of $\RR^n_{\geq 0}$.
Then the toric variety associated with it is $\CC^n$.
Moreover, the morphism of fans $\Sigma\lr \Sigma_{1}$ induces a proper morphism
\[\pi\colon X_{\Sigma}\lr \CC^n\]
of toric varieties.
Set $V':=\pi^{-1}(V)$.

\begin{lem}\label{pushconst1}
In the situation as above, we have an isomorphism
\[\psi_{f,\lambda}(\CC_{\CC^n})\simeq \psi_{f,\lambda}(\DR\pi_{*}\CC_{X_{\Sigma}})\] in $\DBC{V}$ for any $\lambda\in\CC$.
\end{lem}
\begin{proof}
By the construction of $\Sigma$, one can easily show that
the morphism $\pi$ induces an isomorphism
\[X_{\Sigma}\setminus V'\simar \CC^n\setminus V.\]
Hence we obtain an isomorphism
\[(\DR\pi_{*}\CC_{X_{\Sigma}})|_{\CC^n\setminus V}\simeq \CC_{\CC^n\setminus V}\]
in $\DBC{\CC^n\setminus V}$.
Now the desired assertion immediately follows from the definition of the nearby cycle functor.
\end{proof}

In what follows, we fix $\lambda\notin R_{f}$.
Consider the following distinguished triangle in $\DBC{\{0\}}\simeq \mathrm{D^b}(\mathrm{Mod}(\CC))$:
\begin{align*}
(\DR\Gamma_{\{0\}}\psi_{f,\lambda}(\CC_{\CC^n}))_{0}\lr
\psi_{f,\lambda}(\CC_{\CC^n})_{0}\lr
(\DR\Gamma_{V\setminus\{0\}}\psi_{f,\lambda}(\CC_{\CC^n}))_{0}\overset{+1}{\lr}.
\end{align*}
The first arrow in it coincides with the natural morphism 
\begin{align}\label{ar1}
j^{!}_{0}\psi_{f,\lambda}(\CC_{\CC^n})\lr j^{-1}_{0}\psi_{f,\lambda}(\CC_{\CC^n}).
\end{align}
Therefore, it is enough to show that $(\DR\Gamma_{V\setminus\{0\}}\psi_{f,\lambda}(\CC_{\CC^n}))_{0}$ is isomorphic to $0$.
Consider the Cartesian diagram:
\[
\xymatrix{
V\setminus\{0\} \ar@{}[dr]|-{\square}\ar@{^(->}[r]^-{i} &V\\
V'\setminus\pi^{-1}(0)\ar[u]^{\pi''}\ar@{^(->}[r]^-{i'}& V'\ar[u]_{\pi'},
}
\]
where $i, i'$ are the inclusions and $\pi', \pi''$ are the restrictions of $\pi$.
Since $\pi'$ and $\pi''$ are proper, we obtain the following isomorphisms,
where in the third isomorphism we used Proposition~4.2.11 of Dimca~\cite{DIMCA}: 
\begin{align}
(\DR\Gamma_{V\setminus\{0\}}\psi_{f,\lambda}(\CC_{\CC^n}))_{0}&=
(\DR i_{*}i^{-1}\psi_{f,\lambda}(\CC_{\CC^n}))_{0}\notag\\\notag
& \simeq (\DR i_{*}i^{-1}\psi_{f,\lambda}(\DR\pi_{*}\CC_{X_{\Sigma}}))_{0} \qquad (\mbox{by Lemma~\ref{pushconst1}})\\\notag
& \simeq (\DR i_{*}i^{-1}\DR\pi'_{*}\psi_{f\circ\pi,\lambda}(\CC_{X_{\Sigma}}))_{0}\\\notag
& \simeq (\DR i_{*}\DR\pi''_{*}i'^{-1}\psi_{f\circ\pi,\lambda}(\CC_{X_{\Sigma}}))_{0}\\\notag
&\simeq (\DR \pi'_{*}\DR i'_{*}i'^{-1}\psi_{f\circ\pi,\lambda}(\CC_{X_{\Sigma}}))_{0}\\
&\simeq \DR\Gamma(\pi^{-1}(0);(\DR i'_{*}i'^{-1}\psi_{f\circ\pi,\lambda}(\CC_{X_{\Sigma}}))|_{\pi^{-1}(0)})\\
&\simeq \DR\Gamma(\pi^{-1}(0);(\DR \Gamma_{V'\setminus \pi^{-1}(0)}(\psi_{f\circ\pi,\lambda}(\CC_{X_{\Sigma}})))|_{\pi^{-1}(0)})\label{vanish1}
\end{align}
in $\mathrm{D^{b}}(\mathrm{Mod}(\CC))$.
The following argument is inspired by the proof of Theorem~3.17 in Saito-Takeuchi~\cite{SaiTake}.
Let $\rho_{1},\dots, \rho_{N}$ be the rays in $\Sigma$.
By abuse of notation, we will use the same symbol $\rho_{i}$ for the primitive vector on the ray $\rho_{i}$. 
For $1\leq i\leq N$, set
\[m_{\rho_{i}}:=\min_{\alpha \in \Ntph}\inpro{\alpha}{\rho_{i}}\in\ZZ_{\geq 0},\]
where $\inpro{\alpha}{\rho_{i}}$ is the inner product of $\alpha$ and $\rho_{i}$ in $\RR^n$.
We may assume that for some $1\leq l\leq N$ we have
$m_{\rho_{i}}\neq 0$ and $\lambda^{m_{\rho_{i}}}=1$ if and only if $1\leq i\leq l$,
and for some $l\leq l' \leq N$ we have
$m_{\rho_{i}}=0$ if and only if $l'+1\leq i\leq N$.
For a cone $\sigma\in \Sigma$ containing some $\rho_{i}$ with $1\leq i\leq l'$, we also set
\[m_{\sigma}:=\gcd_{\substack{1\leq i\leq l'\\\rho_{i}\prec \sigma}}(m_{\rho_{i}})\in \ZZ_{\geq 1}.\]
For $1\leq i\leq l'$ we denote by $E_{i}$ the closure $\overline{T_{\rho_{i}}}$ of $T_{\rho_{i}}$ in $X_{\Sigma}$.
Note that the order of zeros of $f\circ \pi$ along the divisor $E_{i}$ is equal to $m_{\rho_{i}}$.
Let $Z$ be the strict transform of $V$ in $X_{\Sigma}$.
Then $V'=(f\circ \pi)^{-1}(0)$ has the following form
\[V'=E_{1}\cup\dots\cup E_{l'}\cup Z.\]
Since $f$ is non-degenerate at $0$, the divisor $V'$ is normal crossing in a neighborhood of $\pi^{-1}(0)$.
In particular, $Z$ is smooth there.
Thus, for our discussion below we may assume $V'$ is normal crossing.
For a subset $I\subset \{1,\dots,l\}$,
we set
\[E_{I}:=\bigcap_{i\in I} E_{i}, \quad E_{I}^{\circ}:=E_{I}\setminus(\bigcup_{\substack{1\leq i\leq l'\\ i\notin I}}E_{i}\cup Z) \mbox{\quad and\quad} U_{I}:=E_{I}\setminus(\bigcup_{l+1\leq i\leq l'}E_{i}\cup Z).\]
Moreover, we write $i_{I}$ and $j_{I}$ for the inclusion maps $i_{I}\colon E_{I}^{\circ}\hlr E_{I}$ and $j_{I}\colon U_{I}\hlr E_{I}$ respectively.
We denote by $\iota$ the inclusion map $\iota\colon X_{\Sigma}\setminus V'\hlr X_{\Sigma}$ and define a sheaf $\calF_{\lambda}$ on $X_{\Sigma}$ by
\[\calF_{\lambda}:=\iota_{*}(f\circ\pi|_{X_{\Sigma}\setminus V'})^{-1}\calL_{\lambda^{-1}},\]
where $\calL_{\lambda^{-1}}$ is the $\CC$-local system on $\CC^*$ of rank $1$ whose monodromy is given by the multiplication by $\lambda^{-1}$. 
Since for $1\leq i\leq l'$ the monodromy of the local system $(f\circ \pi)^{-1}\calL_{\lambda^{-1}}$ around the divisor $E_{i}$ is given by $\lambda^{-m_{\rho_{i}}}$ , the restriction of $\calF_{\lambda}$ to $U_{I}$ is a local system of rank $1$.
For $I\subset \{1,\dots,n\}$, we set $\calF_{\lambda, I}=\calF_{\lambda}|_{E_{I}}$ and
write $i_{E_{I}}$ for the inclusion $i_{E_{I}}\colon E_{I}\hlr V'$.
Then, by the primitive decomposition of ${}^p\psi_{f\circ\pi,\lambda}(\CC_{X_{\Sigma}}[n])$ each graded piece of ${}^p\psi_{f\circ\pi,\lambda}(\CC_{X_{\Sigma}}[n])$ with respect to the filtration $W_{\bullet}{}^p\psi_{f\circ\pi,\lambda}(\CC_{X_{\Sigma}}[n])$ is a direct sum of some perverse sheaves 
\[{i_{E_{I}}}_{*}{j_{I}}_{!}j_{I}^{-1}\calF_{\lambda,I}[n-|I|](\simeq {i_{E_{I}}}_{*}{j_{I}}_{*}j_{I}^{-1}\calF_{\lambda,I}[n-|I|])\] for $I\subset \{1,\dots,l\}$ (see Section~1.4 of \cite{somecons} and Section~4.2 of \cite{IGUSA}). 
Therefore, to show that $(\ref{vanish1})$ is isomorphic to $0$ in $\mathrm{D^b}(\mathrm{Mod}(\CC))$,
it suffices to show that for each $I\subset \{1,\dots,l\}$, we have
\begin{align}\label{vanish2}
\DR\Gamma(\pi^{-1}(0);(\DR \Gamma_{V'\setminus \pi^{-1}(0)}({i_{E_{I}}}_{*}{j_{I}}_{!}j_{I}^{-1}\calF_{\lambda,I}))|_{\pi^{-1}(0)})\simeq 0.
\end{align}
Fix $I\subset \{1,\dots, l\}$ such that $E_{I}\neq \emptyset$.
If for some $i\in I$ we have $E_{i}\subset \pi^{-1}(0)$,
the sheaf ${i'}^{-1}{i_{E_{I}}}_{*}{j_{I}}_{!}j_{I}^{-1}\calF_{\lambda,I}$ is zero.
For this reason, in what follows we may assume that $E_{i}\not\subset \pi^{-1}(0)$ for any $i\in I$.
Namely, the ray $\rho_{i}$ is contained in the boundary $\partial\RRpn$ of $\RRpn$ for any $i\in I$.
By the property of $\calF_{\lambda}$ stated above,
we can easily see the isomorphisms on $E_{I}$:
\begin{align}
{j_{I}}_{!}j_{I}^{-1}\calF_{\lambda, I}\simeq {j_{I}}_{*}j_{I}^{-1}\calF_{\lambda,I}\simeq
\DR{j_{I}}_{*}{j_{I}}^{-1}\calF_{\lambda,I}\simeq \DR{\Gamma}_{U_{I}}(\calF_{\lambda,I}).
\end{align}
We thus obtain
\[\DR \Gamma_{V'\setminus \pi^{-1}(0)}({i_{E_{I}}}_{*}{j_{I}}_{!}j_{I}^{-1}\calF_{\lambda,I})\simeq {i_{E_{I}}}_{*}\DR\Gamma_{U_{I}\setminus \pi^{-1}(0)}(\calF_{\lambda,I})\simeq 
{i_{E_{I}}}_{*}{j_{I}'}_{*}{j'_{I}}^{-1}\calF_{\lambda,I},\]
where we write $j_{I}'$ for the inclusion $j_{I}'\colon U_{I}\setminus \pi^{-1}(0)\hlr E_{I}$.
Note that its restriction to the outside of $U_{I}$ in a small neighborhood of $\pi^{-1}(0)$ is zero.
Therefore, to show (\ref{vanish2}), it is enough to show that
\begin{align}\label{vanish3}
\DR\Gamma_{c}(U_{I}\cap \pi^{-1}(0); (\DR {j'_I}_{*}{j'_{I}}^{-1}\calF_{\lambda,I})|_{U_{I}\cap \pi^{-1}(0)})\simeq 0.
\end{align}
Since $E_{I}\neq \emptyset$, the cone $\tau$ generated by the rays $\{\rho_{i}\}_{i\in I}$ is in $\Sigma$.
The subvariety $U_{I}\cap \pi^{-1}(0)$ of $X_{\Sigma}$ is the union of $T_{\sigma}\setminus Z$ for the cones $\sigma\in\Sigma$ satisfying the following condition $(\star)$:
\[
(\star)\left\{\begin{array}{cl}
(\mathrm{i})&\mbox{$\tau$ is a face of $\sigma$},\\
(\mathrm{ii})&\mbox{$\Relint{\sigma}\subset \Int{\RRpn}$ \quad and}\\
(\mathrm{iii})&\mbox{any ray $\rho_{i}\in\Sigma$ contained in $\sigma$ satisfies $\lambda^{m_{\rho_{i}}}=1$},
\end{array}
\right.
\]
where $\Relint{\sigma}$ is the relative interior of $\sigma$ and $\Int{\RRpn}$ is the interior of $\RRpn$.
Note that such $\sigma$ may contain some rays $\rho_{i}$ such that $i>l'$.
Therefore, to show (\ref{vanish3}), we shall show that
for any $\sigma\in\Sigma$ with the condition ($\star$) we have 
\begin{align}\label{vanish4}
\DR\Gamma_{c}(T_{\sigma}\setminus Z; (\DR{j'_{I}}_{*}{j'_{I}}^{-1}\calF_{\lambda,I})|_{T_{\sigma}\setminus Z})\simeq 0.
\end{align}
In what follows, we fix a cone $\sigma\in\Sigma$ with the condition ($\star$).
Let $\tl{\sigma}\in \Sigma_{0}$ be the unique cone in $\Sigma_{0}$ such that
$\Relint{\sigma}\subset \Int{\tl{\sigma}}$ 
and $F(\tl{\sigma})\prec\Ntph$ the face of $\Ntph$ which corresponds to it. 
By the condition $\Relint{\sigma}\subset \Int{\RR^n_{\geq 0}}$, the face $F(\tl{\sigma})$ is compact.
We denote by $d_{\Ftsigma}$ the lattice distance of $\Ftsigma$ from the origin $0\in\RR^n$.
Then we can easily show the following assertion.
\begin{lem}\label{hullsingmaeq}
Assume that $\dim\sigma=\dim \tl{\sigma}$.
Then we have $m_{\sigma}=d_{\Ftsigma}$.
\end{lem}
Suppose that $\dim\sigma=\dim \tl{\sigma}$.
Since for any $i\in I$ we have $\rho_{i}\subset \partial \RR^n_{\geq 0}$ and $m_{\rho_{i}}>0$,
there exists a non-compact face $G$ of $\Ntph$ containing $\Ftsigma$ and $G\not\subset\partial\RRpn$.
Then, by Lemma~\ref{hullsingmaeq} and the assumption that $\lambda\notin R_{f}$,
we have $\lambda^{m_{\sigma}}=\lambda^{d_{F(\tl{\sigma})}}\neq 1$.
Therefore, there exists a ray $\rho_{i}$ in $\sigma$ such that $\lambda^{m_{\rho_{i}}}\neq 1$.
This contradicts our condition $(\star)$.
Hence we have 
\[\dim\sigma<\dim \tl{\sigma}.\]
Take a cone $\sigma'\in \Sigma$ such that
$\sigma\prec \sigma' \subset \tl{\sigma}$ and $\dim\sigma'=\dim \tl{\sigma}$.
Then by the argument as above,
we have $\lambda^{m_{\sigma'}}\neq 1$.
Thus, by the condition $(\star)$ it follows that there exists a ray $\rho_{i}\prec \sigma'$ such that $\rho_{i}\not\prec \sigma$ and
\begin{align}\label{neq1nosiki}
\lambda^{m_{\rho_{i}}}\neq 1.
\end{align}
Moreover, take a $n$-dimensional cone $\sigma''\in\Sigma$ such that $\sigma'\prec\sigma''$.
Let $\Edge(\tau)$, $\Edge(\sigma)$, $\Edge(\sigma')$ and $\Edge(\sigma'')$
be the sets of edges (i.e rays $\rho_{i}$) of the smooth cones $\tau$, $\sigma$, $\sigma'$ and $\sigma''$ respectively.
We assume that for some $1\leq i_{1}\leq i_{2}<i_{3}\leq i_{4}\leq n$ we have
\begin{align*}
\Edge(\tau)&=\{\xi_{1},\dots,\xi_{i_{1}}\},\\
\Edge(\sigma)&=\{\xi_{1},\dots, \xi_{i_{2}}\},\\
\Edge(\sigma')&=\{\xi_{1},\dots,\xi_{i_{3}}\},\\
\Edge(\sigma'')&=\{\xi_{1},\dots,\xi_{n}\}
\end{align*}
($\xi_{i}\in\Sigma$).
Set $s_{1}:=i_{1}$, $s_{2}:=i_{2}-i_{1}$, $s_{3}:=i_{3}-i_{2}$, $s_{4}:=n-i_{3}$.
Note that by the condition $\sigma\Precneq \sigma'$ we have $s_{3}>0$.
Since $\sigma''$ is a smooth cone,
the affine open subset $\CC^n(\sigma'')\simeq\CC^n$ of $X_{\Sigma}$ associated with $\sigma''$ has a natural decomposition:
\[\CC^n(\sigma'')=\CC^{s_{1}}\times \CC^{s_{2}}\times \CC^{s_{3}}\times \CC^{s_{4}}.\]
Let
\[(x_{1},\dots,x_{s_{1}},y_{1},\dots,y_{s_{2}},z_{1},\dots, z_{s_{3}},w_{1},\dots,w_{s_{4}})\]
be the corresponding coordinates of $\CC^n(\sigma'')$.
In $\CC(\sigma'')\simeq \CC^n$, we have
\begin{align*}
E_{I}&=\{0\}\times \CC^{s_{2}}\times \CC^{s_{3}}\times \CC^{s_{4}} \qquad\mbox{and}\\
T_{\sigma}&=\{0\}\times\{0\}\times\CS^{s_{3}}\times\CS^{s_{4}}.
\end{align*}
For any $i_{2}+1\leq i\leq i_{3}$ the function $\inpro{\xi_{i}}{\cdot}$ is constant on $F(\tl{\sigma})$.
Since $f$ is non-degenerate at $0$,
$T_{\sigma}\cap Z$ is smooth and its defining polynomial in $T_{\sigma}$ can be described by
\[f\circ \pi=z_{1}^{m_{\xi_{i_{2}+1}}}\dots z_{s_{3}}^{m_{\xi_{i_{3}}}}g(w_{1},\dots,w_{s_{4}}),\]
where $g(w_{1},\dots, w_{s_{4}})\in\CC[w_{1},\dots,w_{s_{4}}]$.
We denote by $W$ the zero set of $g(w_{1},\dots,w_{s_{4}})$ in $\CS^{s_{4}}$.
Then we have
\[T_{\sigma}\cap Z=\{0\}\times \{0\}\times \CS^{s_{3}}\times W,\]
and
\[T_{\sigma}\setminus Z=\{0\}\times \{0\}\times \CS^{s_{3}}\times (\CS^{s_{4}}\setminus W).\]
Let $p_{3}$ be the projection $p_{3}\colon T_{\sigma}\setminus Z=\CS^{s_{3}}\times (\CS^{s_{4}}\setminus W)\lr\CS^{s_{3}}$,
and $p_{4}$ be the projection $p_{4}\colon T_{\sigma}\setminus Z=\CS^{s_{3}}\times (\CS^{s_{4}}\setminus W)\lr \CS^{s_{4}}\setminus W$. 
We define $\calL_{3}$ by the $\CC$-local system on $\CS^{s_{3}}$ of rank $1$ whose monodromy around the divisor $\{(z_{1},\dots,z_{s_{3}})\in \CC^{s_{3}}\ |\ z_{t}=0\}$ is given by the multiplication by $\lambda^{-m_{\xi_{(i_{2}+t)}}}$ for each $1\leq t\leq s_{3}$.
Note that by $(\ref{neq1nosiki})$ there exists $i_{2}+1\leq i\leq i_{3}$ such that
\begin{align}\label{neq1nosiki3}
\lambda^{-m_{\xi_{i}}}\neq 1.
\end{align}
By the definition of $\calF_{\lambda,I}$, one can show that there exist a $\CC$-local system $\calL_{4}$ on $\CS^{s_{4}}\setminus W$ and a complex $C^{\bullet}$ of $\CC$-vector spaces such that
\[(\DR{j'_{I}}_{*}{j'_{I}}^{-1}\calF_{I,\lambda})|_{T_{\sigma}\setminus Z}\simeq C^{\bullet}\otimes_{\CC}p_{3}^{-1}\calL_{3}\otimes _{\CC}p_{4}^{-1}\calL_{4}.\]
Recall that for any non-trivial $\CC$-local system $\calL$ on $\CC^*$ of rank $1$, we have $H^{j}(\CC^*;\calL)\simeq 0$ for all $j\in\ZZ$.
Hence, by the K\"unneth formula and (\ref{neq1nosiki3})
we deduce the vanishing $(\ref{vanish4})$. 
This completes the proof that the morphism~(\ref{firsteq}) is isomorphism
and the proof of Proposition~\ref{compare1prop}.
\end{proof}

\begin{proof}[Proof of Theorem~\ref{main1}]
Recall that the complex 
$^{p}\psi_{f,\lambda}(\CC_{\CC^n}{[n]})=\psi_{f,\lambda}(\CC_{\CC^n}{[n-1]}) \in \DBC{V}$ is a perverse sheaf.
By the fact that the functor $j_{0}^{-1} \colon \DBC{V}\lr \DBC{\{0\}}$ (resp. $j_{0}^{!} \colon \DBC{V}\lr \DBC{\{0\}}$) is right (resp. left) $t$-exact,
it follows from Proposition~\ref{compare1prop} that $j_{0}^{-1}({}^{p}\psi_{f,\lambda}(\CC_{\CC^n}[n]))$ is a perverse sheaf on $\{0\}$.
Hence the cohomology group 
\[H^{j}(j_{0}^{-1}({}^{p}\psi_{f,\lambda}(\CC_{\CC^n}[n])))\simeq H^{j+n-1}(F_{f,0};\CC)_{\lambda}\] vanishes for $j\neq 0$.
We thus obtain the desired concentration.
\end{proof}

Recall that the cohomology groups $H^{j}(F_{f,0};\QQ)$ of the Milnor fiber are endowed with mixed Hodge structures.
Since we do not assume here that $f$ is convenient,
we can not expect that their weight filtrations coincide with the monodromy filtrations in general.
However, we will obtain the following result.

\begin{thm}\label{main2}
In the situation of Theorem~\ref{main1}, for any $\lambda\notin R_{f}$
the filtration on $H^{n-1}(F_{f,0};\CC)_{\lambda}$ induced by the weight filtration on $H^{n-1}(F_{f,0};\QQ)$
coincides with the monodromy filtration of the logarithm of the unipotent part of $\Phi_{n-1,\lambda}$ centered at $n-1$.
\end{thm}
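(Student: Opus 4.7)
The plan is to combine Proposition~\ref{compare1prop} with M.~Saito's structure theorem for the weight filtration on nearby cycles of pure Hodge modules. Set
\[
M^H := {}^p\psi_{f,\lambda}^H(\QQ^H_{\CC^n}[n])\in\DBMHM{V}.
\]
Since $\QQ^H_{\CC^n}[n]$ is a pure Hodge module of weight $n$, Saito's theory tells us that $M^H$ is a mixed Hodge module whose weight filtration $W_\bullet M^H$ coincides, on the underlying perverse sheaf, with the monodromy filtration of $N=\log(\Phi^u)$ centered at $n-1$ (for $\lambda\neq 1$), where $\Phi^u$ denotes the unipotent part of the monodromy automorphism of $^p\psi_{f,\lambda}(\CC_{\CC^n}[n])$. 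This already gives the desired filtration on $M^H$ itself; what remains is to transport this statement faithfully to the stalk at the origin.

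The first step in the transport is to upgrade Proposition~\ref{compare1prop} to $\DBMHM{\{0\}}$. Since the forgetful functor from mixed Hodge modules to constructible complexes is conservative, the natural morphism $j_0^! M^H \lr j_0^{-1} M^H$ in $\DBMHM{\{0\}}$ is an isomorphism as soon as its underlying morphism in $\DBC{\{0\}}$ is, which holds by Proposition~\ref{compare1prop}. Next, I would use the fact that $j_0^{-1}$ does not increase weights while $j_0^!$ does not decrease them, together with strictness of morphisms in $\DBMHM$ with respect to the weight filtration. Arguing by induction along the weight filtration $W_\bullet M^H$, the iso $j_0^! M^H\simar j_0^{-1} M^H$ restricts to isos $j_0^! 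W_k M^H\simar j_0^{-1} W_k M^H$ for every $k$, and hence to
\[
j_0^!(\GR^W_k M^H)\simar j_0^{-1}(\GR^W_k M^H).
\]
Since $\GR^W_k M^H$ is a pure Hodge module of weight $k$, this forces each stalk $H^{j}(j_0^{-1}\GR^W_k M^H)$ to carry a pure Hodge structure of weight $k$.

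In the third step I would exploit the concentration theorem: by Theorem~\ref{main1} the only nonzero cohomology of $j_0^{-1}M^H$ lies in degree $0$ and is identified with $H^{n-1}(F_{f,0};\CC)_\lambda$. The weight spectral sequence attached to $W_\bullet M^H$ then degenerates at $E_2$ (by Saito's strictness, combined with the vanishing ensured by step two at each graded level), producing the identification
\[
\GR^W_k H^{n-1}(F_{f,0};\CC)_{\lambda}\simeq H^{0}(j_0^{-1}\GR^W_k M^H),
\]
which is pure of weight $k$. Finally, the relations $N(W_k M^H)\subset W_{k-2} M^H$ and the isomorphisms $N^r\colon \GR^W_{n-1+r}M^H\simar \GR^W_{n-1-r}M^H$ are functorial and survive application of $H^0\circ j_0^{-1}$; hence the filtration induced by $W_\bullet$ on $H^{n-1}(F_{f,0};\CC)_\lambda$ satisfies the two characterizing conditions of the monodromy filtration of $N$ centered at $n-1$, and by uniqueness of that filtration the theorem follows.

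The main obstacle is the strictness/purity argument in the second and third steps: one must ensure that the iso $j_0^!\simar j_0^{-1}$ for $M^H$ genuinely descends through the weight filtration to its graded pieces, and that the resulting weight spectral sequence for the stalk degenerates enough to yield the clean identification of $\GR^W_k H^{n-1}(F_{f,0};\CC)_\lambda$ above. This is precisely where the hypothesis $\lambda\notin R_f$ is used in an essential way, because it is exactly this hypothesis that guarantees the key iso $j_0^!M^H\simar j_0^{-1}M^H$ coming from Proposition~\ref{compare1prop}.
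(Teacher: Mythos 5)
Your overall architecture is the same as the paper's: take as input Saito's theorem that the weight filtration on ${}^p\psi_{f,\lambda}(\CC_{\CC^n}[n])$ is the monodromy filtration of $N$ centered at $n-1$, transport it to the stalk at $0$ via the stalk--costalk isomorphism of Proposition~\ref{compare1prop}, identify the induced filtration on $H^{n-1}(F_{f,0};\CC)_{\lambda}$ with the weight filtration of the stalk mixed Hodge structure, and conclude by uniqueness of the monodromy filtration. The last two steps of your sketch are fine (they are the paper's Lemma~\ref{compare3} and the final argument of the proof, and the purity you need is of weight $k+j$ in degree $j$, which is harmless once concentration in degree $0$ is known). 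The genuine gap is in your second step: the claim that the isomorphism $j_0^!M^H\simar j_0^{-1}M^H$ ``restricts'' to isomorphisms $j_0^!W_kM^H\simar j_0^{-1}W_kM^H$ is exactly the hard point of the theorem, and it does not follow from weight monotonicity of $j_0^{-1}$, $j_0^!$ plus strictness plus induction along $W_\bullet$. Concretely, from $0\to W_kM\to M\to M/W_kM\to 0$ the two long exact sequences for $j_0^{-1}$ and $j_0^!$ have connecting morphisms (e.g.\ $H^{-1}j_0^{-1}(M/W_kM)\to H^{0}j_0^{-1}W_kM$ and $H^{0}j_0^!(M/W_kM)\to H^{1}j_0^!W_kM$) that are not killed by any weight estimate, and the concentration of $j_0^{-1}W_kM$ and $j_0^{-1}(M/W_kM)$ in degree $0$ --- which your spectral-sequence step silently uses --- is itself a nontrivial vanishing statement about a sub-perverse sheaf of ${}^p\psi_{f,\lambda}$ that does not follow formally from the corresponding statement for ${}^p\psi_{f,\lambda}$. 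Your formal argument only yields injectivity of $H^0j_0^!W_kM\to H^0j_0^{-1}W_kM$ and surjectivity modulo the image of $H^{-1}j_0^{-1}(M/W_kM)$; closing this circle is precisely what is missing.

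The paper fills this gap in two ways, and you would need one of them. Proposition~\ref{compare1propgen} proves the $W_k$-compatible isomorphism geometrically: using Lemma~\ref{pushconst1} it reduces to the vanishing of $\DR\Gamma_{V\setminus\{0\}}(\,\cdot\,)_0$ on $\DR\pi'_{*}\GR^W_i({}^p\psi_{f\circ\pi,\lambda}(\CC_{X_\Sigma}[n]))$, invokes the decomposition theorem for the proper map $\pi'$ applied to the pure graded pieces, and then quotes the vanishing already established inside the proof of Proposition~\ref{compare1prop}. Alternatively, Remark~\ref{newproof} proves the general fact you are implicitly assuming: if $H^kj_0^!\calM\to H^kj_0^{*}\calM$ is an isomorphism for all $k$, then $j_0^{*}\calM$ is concentrated in degree $0$ and $(j_0)_*H^0j_0^{*}\calM$ is a direct summand of $\calM$ in $\mathrm{MHM}(\CC^n)$ (via the composite $(j_0)_*H^0j_0^!\calM\to\calM\to(j_0)_*H^0j_0^{*}\calM$ being an isomorphism), whence the complementary summand has zero stalk and costalk at $0$ and the weight filtration, together with the isomorphisms $N^k\colon\GR^W_{n-1+k}\simar\GR^W_{n-1-k}$, restricts to the summand; note also that this argument is run for the $\QQ$- (or $\RR$-) sub-Hodge module whose complexification is ${}^p\psi_{f,\lambda}\oplus{}^p\psi_{f,\bar\lambda}$, since the $\lambda$-part alone is not defined over $\QQ$. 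Without one of these two arguments, the step ``the iso descends through $W_\bullet$'' --- which you yourself flag as the main obstacle --- remains unproved.
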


\begin{rem}
Theorems~\ref{main1} and \ref{main2}
explain the reason why the coefficients of the polynomial $E_{\lambda}(F_{f,0};u,v)$ for $\lambda\notin R_{f}$ satisfy the symmetry in Proposition~\ref{propsymEpoly}.
\end{rem}

For the proof of Theorems~\ref{main2}, 
we need a generalization of Proposition~\ref{compare1prop} below.
In what follows, we shall freely use the notations in the proof of Proposition~\ref{compare1prop}.
Let $W_{\bullet}({}^p\psi_{f,\lambda}(\CC_{\CC^n}[n]))$ be the filtration on the perverse sheaf ${}^p\psi_{f,\lambda}(\CC_{\CC^n}[n])$ defined by the weight filtration of the mixed Hodge module $\psi_{f}^{H}(\QQ^{H}_{\CC^n}[n])$.
More generally, by using the exact functors $W_{k}\colon \mathrm{MHM}(X)\to \mathrm{MHM}(X)$ for an object $\calM^{\bullet}\in \mathrm{D^{b}MHM}(X)$ we can define new ones $W_{k}\calM^{\bullet}$ in $\mathrm{D^{b}MHM}(X)$.

\begin{prop}\label{compare1propgen} 
In this situation of Theorems~\ref{main2}, for any $\lambda\notin R_{f}$, $k\in\ZZ$
and the inclusion map $j_{0}\colon\{0\}\hlr V$
the natural morphism in $\DBC{\{0\}}=\DB{\mathrm{Mod}(\CC)}\colon$ 
\begin{align*}
j_{0}^{!}W_{k}({}^p\psi_{f,\lambda}(\CC_{\CC^n}[n]))\lr j_{0}^{-1}W_{k}({}^p\psi_{f,\lambda}(\CC_{\CC^n}[n]))
\end{align*}
is an isomorphism.
\end{prop}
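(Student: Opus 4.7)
The plan is to reduce the statement for $W_k$ to Proposition~\ref{compare1prop} by induction on $k$. For $k$ sufficiently small, $W_k({}^p\psi_{f,\lambda}(\CC_{\CC^n}[n]))=0$ and there is nothing to prove; for $k$ sufficiently large, $W_k = {}^p\psi_{f,\lambda}(\CC_{\CC^n}[n])$ and the claim is exactly Proposition~\ref{compare1prop}. The inductive step is driven by the short exact sequence of perverse sheaves
\begin{equation*}
0 \lr W_{k-1} \lr W_k \lr \mathrm{Gr}^W_k \lr 0
\end{equation*}
(abbreviating $W_k := W_k({}^p\psi_{f,\lambda}(\CC_{\CC^n}[n]))$), which yields a distinguished triangle in $\DBC{V}$. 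Applying both $j_0^!$ and $j_0^{-1}$ to this triangle, comparing them through the natural transformation $j_0^! \to j_0^{-1}$, and invoking the five lemma on the resulting long exact cohomology sequences, the problem reduces to the analogous assertion for each graded piece $\mathrm{Gr}^W_k$.

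For the graded piece, I would revisit the argument of Proposition~\ref{compare1prop} and observe that it already handles each weight graded piece individually. In that proof, after passing to the log resolution $\pi\colon X_\Sigma\to \CC^n$, the primitive decomposition expressed $\mathrm{Gr}^W_\bullet {}^p\psi_{f\circ\pi,\lambda}(\CC_{X_\Sigma}[n])$ as a direct sum of perverse sheaves ${i_{E_I}}_*{j_I}_!j_I^{-1}\calF_{\lambda,I}[n-|I|]$, and the vanishing (\ref{vanish2}) was established for each such summand individually. Combined with the isomorphism $\psi_{f,\lambda}(\CC_{\CC^n})\simeq \DR\pi'_*\psi_{f\circ\pi,\lambda}(\CC_{X_\Sigma})$ and Saito's strictness of proper direct images for the weight filtration in $\DBMHM{V}$, this vanishing on $X_{\Sigma}$ transfers to the vanishing of $(\DR\Gamma_{V\setminus\{0\}}\mathrm{Gr}^W_k {}^p\psi_{f,\lambda}(\CC_{\CC^n}[n]))_0$ on $V$, which by the same reduction as in the proof of Proposition~\ref{compare1prop} yields the desired isomorphism for $\mathrm{Gr}^W_k$.

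The main obstacle will be to make the identification of the weight filtration on $V$ with the monodromy weight filtration on $X_\Sigma$ via $\DR\pi'_*$ fully rigorous. Although $\DR\pi'_*$ is strict with respect to $W_\bullet$ in $\DBMHM{-}$, the graded pieces $\mathrm{Gr}^W_k\DR\pi'_*$ differ from $\DR\pi'_*\mathrm{Gr}^W_k$ by the weight shifts arising from the individual cohomological components $H^i\DR\pi'_*$. Carefully tracking these shifts, and verifying that they preserve the piece-by-piece vanishing argument of Proposition~\ref{compare1prop}, is the technical heart of the proof; one may hope to sidestep this bookkeeping by induction on $|I|$ within the primitive decomposition on $X_\Sigma$, building the isomorphism for each step $W_{k-1}\to W_k$ directly from the distinguished triangles it produces on $X_\Sigma$ rather than trying to match filtrations across $\pi'$.
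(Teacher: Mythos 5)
Your overall induction on $k$ via the short exact sequence $0\to W_{k-1}\to W_{k}\to \GR^{W}_{k}\to 0$ and the comparison of triangles is a legitimate framework, and you correctly identify the crux: reducing to the graded piece $\GR^{W}_{k}({}^p\psi_{f,\lambda}(\CC_{\CC^n}[n]))$ on $V$, and then trying to relate this to the primitive decomposition of $\GR^{W}_{\bullet}({}^p\psi_{f\circ\pi,\lambda}(\CC_{X_{\Sigma}}[n]))$ on $X_{\Sigma}$. The gap is exactly where you flag it: strictness of $\DR\pi'_{*}$ for the weight filtration does \emph{not} give $\GR^{W}_{k}\DR\pi'_{*}\simeq \DR\pi'_{*}\GR^{W}_{k}$, because proper pushforward raises weights by the perverse cohomological degree. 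To fill this you would need to invoke the $E_{2}$-degeneration of the weight spectral sequence for $\DR\pi'_{*}$, identify the abutment filtration on $\pcalH^{0}\DR\pi'_{*}\psi$ with the weight filtration, and then use semisimplicity of $\pcalH^{0}\DR\pi'_{*}\GR^{W}_{k}\psi$ (decomposition theorem) to turn the resulting subquotient into a direct summand. None of this is carried out; the sentence "this vanishing on $X_\Sigma$ transfers" is asserted rather than proved, and your proposed fallback (induction on $|I|$ in the primitive decomposition) does not address the cohomological-degree shift at all.

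The paper's actual proof avoids this bookkeeping entirely by running the d\'evissage on the other side. Instead of filtering the object on $V$ by its own weight filtration, it keeps $W_{k}$ as a fixed exact (hence triangulated) functor on $\DBMHM{V}$, applies it to the pushforwards, and does induction over the weight filtration of $\psi_{f\circ\pi,\lambda}$ on $X_{\Sigma}$. This reduces the claim to the vanishing of $\DR\Gamma_{V\setminus\{0\}}(W_{k}(\pcalH^{j}\DR\pi'_{*}\GR^{W}_{i}\psi))_{0}$ for all $i,j,k$. Now the purity of $\pcalH^{j}\DR\pi'_{*}\GR^{W}_{i}\psi$ (weight $i+j$) makes $W_{k}$ trivial: it is either $0$ or the whole perverse sheaf, so one only needs $\DR\Gamma_{V\setminus\{0\}}(\DR\pi'_{*}\GR^{W}_{i}\psi)_{0}\simeq 0$, which was already established in the proof of Proposition~\ref{compare1prop} (after one more invocation of the decomposition theorem to split off the perverse cohomologies). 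In short: the paper never needs to compute $\GR^{W}_{k}$ of the object on $V$ at all, which is precisely what your route requires and does not supply.
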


\begin{proof}
It is enough to show that
\begin{align} \notag
\DR\Gamma_{V\setminus \{0\}}(W_{k}({}^p\psi_{f,\lambda}(\CC_{\CC^n}[n])))_{0}\simeq 0
\end{align}
for any $k\in\ZZ$.
Moreover, since by Lemma~\ref{pushconst1} we have $\DR\pi'_{*}({}^p\psi_{f\circ\pi,\lambda}(\CC_{X_{\Sigma}}[n]))\simeq 
{}^p\psi_{f,\lambda}(\CC_{\CC^n}[n])$,
it suffices to show that
\begin{align}\notag
\DR\Gamma_{V\setminus \{0\}}(W_{k}\DR\pi'_{*}\GR^{W}_{i}({}^p\psi_{f\circ\pi,\lambda}(\CC_{X_{\Sigma}}[n])))_{0}\simeq 0
\end{align}
for any $i, k\in\ZZ$.
Thus we have only to show that
\begin{align}\label{simesu2}
\DR\Gamma_{V\setminus \{0\}}(W_{k}(\pcalH^{j}\DR\pi'_{*}\GR^{W}_{i}({}^p\psi_{f\circ\pi,\lambda}(\CC_{X_{\Sigma}}[n]))))_{0}\simeq 0
\end{align}
for any $i,j,k\in\ZZ$, where for $\calF^{\bullet}\in \mathrm{D^{b}_{c}(V)}$ we denote by ${}^p \calH^{j}(\calF^{\bullet})$ the $j$-th perverse cohomology of $\calF^{\bullet}$.
Note that $\pcalH^{j}\DR\pi'_{*}\GR^{W}_{i}({}^p\psi_{f\circ\pi,\lambda}(\CC_{X_{\Sigma}}[n]))$ has a pure weight $i+j$.
Therefore, we have
\begin{align*}
W_{k}(\pcalH^{j}\DR\pi'_{*}\GR^{W}_{i}({}^p\psi_{f\circ\pi,\lambda}(\CC_{X_{\Sigma}}[n])))
\simeq \left\{
\begin{array}{ll}
0&(k<i+j)\\\\
\pcalH^{j}\DR\pi'_{*}\GR^{W}_{i}({}^p\psi_{f\circ\pi,\lambda}(\CC_{X_{\Sigma}}[n]))&(k\geq i+j).
\end{array}
\right.
\end{align*}
Eventually, to show the vanishing (\ref{simesu2}), it is enough to show that for any $i,j\in\ZZ$ we have
\begin{align}\label{simesu3}
\DR\Gamma_{V\setminus \{0\}}(\pcalH^{j}\DR\pi'_{*}\GR^{W}_{i}({}^p\psi_{f\circ\pi,\lambda}(\CC_{X_{\Sigma}}[n])))_{0}\simeq 0.
\end{align}
Note that a perverse sheaf $\GR^{W}_{i}({}^p\psi_{f\circ\pi,\lambda}(\CC_{X_{\Sigma}}[n])\oplus \GR^{W}_{i}({}^p\psi_{f\circ\pi,\bar{\lambda}}(\CC_{X_{\Sigma}}[n])$ is the complexification of the underlying perverse sheaf of a pure Hodge module.
Then, by the decomposition theorem for the proper map $\pi'$ and this perverse sheaf,
$\DR\pi'_{*}\GR^{W}_{i}({}^p\psi_{f\circ\pi,\lambda}(\CC_{X_{\Sigma}}[n])$ is decomposed into a direct sum of perverse sheaves with some shifts.
Therefore, to show (\ref{simesu3}), it remains for us to show that
\begin{align}\notag
\DR\Gamma_{V\setminus \{0\}}(\DR\pi'_{*}\GR^{W}_{i}({}^p\psi_{f\circ\pi,\lambda}(\CC_{X_{\Sigma}}[n])))_{0}\simeq 0
\end{align}
for any $i\in\ZZ$.
This was already proved in the proof of Proposition~\ref{compare1prop}.
\end{proof}

For the proof of Theorem~\ref{main2}, we also need the following lemma.
\begin{lem}\label{compare3} 
For any $\lambda\notin R_{f}$ and $k\in\ZZ$ we have an isomorphism in $\DBC{\{0\}}\simeq \mathrm{D^b}(\mathrm{Mod}(\CC))\colon$ 
\begin{align*}
W_{k}j_{0}^{-1}({}^p\psi_{f,\lambda}(\CC_{\CC^n}[n]))\simeq j_{0}^{-1}W_{k}\left({}^p\psi_{f,\lambda}(\CC_{\CC^n}[n])\right).
\end{align*}
\end{lem}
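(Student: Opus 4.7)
The plan is to exploit the comparison isomorphism $j_{0}^{!}\simeq j_{0}^{-1}$ given by Propositions~\ref{compare1prop} and \ref{compare1propgen}---now applied to each piece of the weight filtration---together with the standard weight estimates for $j_{0}^{-1}$ and $j_{0}^{!}$ in Saito's theory. Write $M := {}^{p}\psi_{f,\lambda}(\CC_{\CC^{n}}[n])$ and $Q_{k} := M/W_{k}M$, so that we have a distinguished triangle $W_{k}M\lr M\lr Q_{k}\stackrel{+1}{\lr}$ in $\DBMHM{V}$.

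First, I would apply $j_{0}^{-1}$ and $j_{0}^{!}$ to this triangle and connect them via the natural transformation $j_{0}^{!}\lr j_{0}^{-1}$. Proposition~\ref{compare1propgen} makes the leftmost vertical map an isomorphism and Proposition~\ref{compare1prop} makes the middle one an isomorphism, so the triangulated five lemma forces $j_{0}^{!}Q_{k}\simar j_{0}^{-1}Q_{k}$ to be an isomorphism as well. Because $j_{0}^{-1}$ is right $t$-exact and $j_{0}^{!}$ is left $t$-exact for the perverse $t$-structure on $\{0\}$, this coincidence pins each of $j_{0}^{-1}W_{k}M$, $j_{0}^{-1}M$ and $j_{0}^{-1}Q_{k}$ in perverse degree zero (i.e.\ in the heart). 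I therefore obtain a short exact sequence of mixed Hodge structures
\[0\lr j_{0}^{-1}W_{k}M\lr j_{0}^{-1}M\lr j_{0}^{-1}Q_{k}\lr 0.\]
Next, I would invoke Saito's weight estimates for the closed immersion $j_{0}$: in perverse degree zero, $j_{0}^{-1}$ preserves the bound ``weights $\leq k$'' and $j_{0}^{!}$ preserves ``weights $\geq k+1$''. Since $W_{k}M$ has weights $\leq k$, the sub-object $j_{0}^{-1}W_{k}M$ of $j_{0}^{-1}M$ has weights $\leq k$, giving $j_{0}^{-1}W_{k}M\subseteq W_{k}j_{0}^{-1}M$. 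Since $Q_{k}$ has weights $\geq k+1$, so does $j_{0}^{!}Q_{k}$, and through the isomorphism $j_{0}^{!}Q_{k}\simeq j_{0}^{-1}Q_{k}$ so does $j_{0}^{-1}Q_{k}$; hence the projection $j_{0}^{-1}M\lr j_{0}^{-1}Q_{k}$ annihilates $W_{k}j_{0}^{-1}M$, yielding the reverse inclusion $W_{k}j_{0}^{-1}M\subseteq j_{0}^{-1}W_{k}M$.

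I expect the main obstacle to be the first step, namely the concentration in perverse degree zero of the pulled-back filtered piece $j_{0}^{-1}W_{k}M$. This is precisely the content that Proposition~\ref{compare1propgen}---the strengthening of Proposition~\ref{compare1prop} to the weight filtration---is designed to deliver. Once that concentration and the resulting short exact sequence of mixed Hodge structures are in hand, the weight-bound matching in the previous paragraph is a formal consequence of Saito's estimates on the pullback functors.
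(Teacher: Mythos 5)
Your proposal is correct and follows essentially the same route as the paper. Both arguments start from the short exact sequence $0\to W_{k}M\to M\to M/W_{k}M\to 0$, use Propositions~\ref{compare1prop} and \ref{compare1propgen} (with the natural transformation $j_{0}^{!}\to j_{0}^{-1}$ and the $t$-exactness bounds) to conclude that all three pullbacks sit in perverse degree zero and that the sequence remains exact after applying $j_{0}^{-1}$, and then invoke Saito's weight estimates ($j_{0}^{-1}$ preserves weights $\leq k$, $j_{0}^{!}$ preserves weights $>k$) to identify $j_{0}^{-1}W_{k}M$ with $W_{k}j_{0}^{-1}M$; the paper phrases the last step as two isomorphisms obtained by applying $W_{k}$ to the sequence, while you phrase it as the two inclusions, which is the same content.
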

\begin{proof}
For $k\in\ZZ$ we have an exact sequence in $\Perv{V}$:
\begin{align}\label{con3ex}
0\to W_{k}({}^p\psi_{f,\lambda}(\CC_{\CC^n}[n]))\to {}^p\psi_{f,\lambda}(\CC_{\CC^n}[n])\to {}^p\psi_{f,\lambda}(\CC_{\CC^n}[n])/W_{k}({}^p\psi_{f,\lambda}(\CC_{\CC^n}[n]))\to 0.
\end{align}
By Proposition~\ref{compare1prop} and this sequence, 
the natural morphism
\begin{align*}
j^{!}_{0}\left({}^p\psi_{f,\lambda}(\CC_{\CC^n}[n])/W_{k}({}^p\psi_{f,\lambda}(\CC_{\CC^n}[n]))\right)\lr j^{-1}_{0}({}^p\psi_{f,\lambda}(\CC_{\CC^n}[n])/W_{k}({}^p\psi_{f,\lambda}(\CC_{\CC^n}[n])))
\end{align*}
is an isomorphism.
Thus, by the proof of Theorem~\ref{main1} the complexes 
$j^{-1}_{0}({}^p\psi_{f,\lambda}(\CC_{\CC^n}[n]))$,\ 
$j^{-1}_{0}W_{k}({}^p\psi_{f,\lambda}(\CC_{\CC^n}[n]))$ and
$j^{-1}_{0}({}^p\psi_{f,\lambda}(\CC_{\CC^n}[n])/W_{k}({}^p\psi_{f,\lambda}(\CC_{\CC^n}[n])))$ are perverse sheaves on $\{0\}$, i.e. their (perverse) cohomologies are concentrated in the degree $0$.
Therefore, applying the functor $j_{0}^{-1}$ to the sequence (\ref{con3ex}),
we obtain an exact sequence
\begin{align}\label{con3ex2}
0\to j_{0}^{-1}W_{k}({}^p\psi_{f,\lambda}(\CC_{\CC^n}[n]))\to j_{0}^{-1}({}^p\psi_{f,\lambda}(\CC_{\CC^n}[n]))\to j_{0}^{-1}({}^p\psi_{f,\lambda}(\CC_{\CC^n}[n])/W_{k}({}^p\psi_{f,\lambda}(\CC_{\CC^n}[n])))\to 0
\end{align}
in $\Perv{\{0\}}\simeq \Mod{\CC}$.
Since the functor $j_{0}^{!}$ preserves the property that a complex of mixed Hodge modules has weights $> k$,
$j_{0}^{-1}({}^p\psi_{f,\lambda}(\CC_{\CC^n}[n])/W_{k}({}^p\psi_{f,\lambda}(\CC_{\CC^n}[n])))\simeq j_{0}^{!}({}^p\psi_{f,\lambda}(\CC_{\CC^n}[n])/W_{k}({}^p\psi_{f,\lambda}(\CC_{\CC^n}[n])))$ has weights $> k$.
Therefore, taking $W_{k}$ of the sequence (\ref{con3ex2}),
we obtain
\begin{align}\label{a0}
W_{k}j_{0}^{-1}W_{k}({}^p\psi_{f,\lambda}(\CC_{\CC^n}[n]))\simeq W_{k}j_{0}^{-1}({}^p\psi_{f,\lambda}(\CC_{\CC^n}[n])).
\end{align}
On the other hand, since the functor $j_{0}^{-1}$ preserves the property that a complex of mixed Hodge modules has weights $\leq k$,
$j_{0}^{-1}W_{k}({}^p\psi_{f,\lambda}(\CC_{\CC^n}[n]))$ has weights $\leq k$.
Hence we have
\begin{align}\label{a1}
W_{k}j_{0}^{-1}W_{k}({}^p\psi_{f,\lambda}(\CC_{\CC^n}[n]))\simeq j_{0}^{-1}W_{k}({}^p\psi_{f,\lambda}(\CC_{\CC^n}[n])).
\end{align}
Combining the isomorphisms (\ref{a0}) and (\ref{a1}),
we get the desired isomorphism.
\end{proof}

\begin{proof}[Proof of Theorem~\ref{main2}]
Assume that $\lambda\notin R_{f}$.
We denote by $N$ the logarithm of the unipotent part of the monodromy automorphism of ${}^p\psi_{f,\lambda}(\CC_{\CC^n}[n])$,
and by $N_{0}$ its restriction to $0$, i.e. the logarithm operator of the unipotent part of $\Phi_{n-1}$ of $j_{0}^{-1}({}^p\psi_{f,\lambda}(\CC_{\CC^n}[n]))\simeq H^{n-1}(F_{f,0};\CC)_{\lambda}$.
Recall that for any $k\in\ZZ_{\geq 1}$
we have
\begin{align}\label{monofil}
N^{k}\colon \GR^{W}_{n-1+k}({}^p\psi_{f,\lambda}(\CC_{\CC^n}[n]))\simar \GR^{W}_{n-1-k}({}^p\psi_{f,\lambda}(\CC_{\CC^n}[n])).
\end{align}
Applying the functor $j_{0}^{-1}$ to the both sides of (\ref{monofil}),
by Proposition~\ref{compare1propgen} and Lemma~\ref{compare3} we obtain
\begin{align}\label{monofilmilAD}
N_{0}^{k}\colon \GR^{W}_{n-1+k}j^{-1}_{0}({}^p\psi_{f,\lambda}(\CC_{\CC^n}[n]))\simar \GR^{W}_{n-1-k}j^{-1}_{0}({}^p\psi_{f,\lambda}(\CC_{\CC^n}[n]))
\end{align}
that is
\[N_{0}^{k}\colon \GR^{W}_{n-1+k}H^{n-1}(F_{f,0};\CC)_{\lambda}\simar \GR^{W}_{n-1-k}H^{n-1}(F_{f,0};\CC)_{\lambda}\]
for any $k\in\ZZ_{\geq 1}$.
This implies that the weight filtration on $H^{n-1}(F_{f,0};\CC)_{\lambda}$ coincides with the monodromy filtration centered at $n-1$.
\end{proof}

\begin{rem}\label{newproof}
We can also prove Theorem~\ref{main2} in the following way.
First, we show the following general fact:
for a mixed Hodge module $\calM$ on $\CC^n$
if the natural morphism $H^kj_{0}^{!}\calM\to H^kj_{0}^{*}\calM$ is an isomorphism for any $k\in \ZZ$, 
then we have the following:
\begin{enumerate}
\item[(i)] For $k\neq 0$, we have $H^{k}j_{0}^{*}\calM=0$.
\item[(ii)] The natural morphism $(j_{0})_{*}H^0j_{0}^{*}\calM\to \calM$ is a monomorphism in $\mathrm{MHM}(\CC^n)$ and $(j_{0})_{*}H^0j_{0}^{*}\calM$ is a direct summand of $\calM$.
\end{enumerate}
Let $\calF$ be the underlying perverse sheaf of $\calM$.
We remark that the natural morphism $H^kj_{0}^{!}\calM\to H^kj_{0}^{*}\calM$ is an isomorphism for any $k\in \ZZ$
if and only if the natural morphism $j_{0}^{!}\calF\to j_{0}^{-1}\calF$ is an isomorphism since the functor $\mathrm{MHM}(\CC^n)\lr \Perv{\CC^n}$ is faithful.
Next, for $\lambda\notin R_{f}$ we consider the sub $\RR$-mixed Hodge module $\calM$ of $\psi_{f}^{H}(\RR^{H}_{\CC^n}[n])$ such that the complexification of the underlying perverse sheaf is $\calF={}^p\psi_{f,\lambda}(\CC_{\CC^n}[n])\oplus {}^p\psi_{f,\bar{\lambda}}(\CC_{\CC^n}[n])$.
Note that $\lambda\notin R_{f}$ implies $\bar{\lambda}\notin R_{f}$ by definition.
Then, we can apply the above fact to $\calM$ by Proposition~\ref{compare1prop},
and we thus obtain 
${}^p\calH^{k}(j_{0}^{-1}\calF)=0\ (k\neq 0)$ and
the filtration $(j_{0})_{*}(W_{\bullet}{}^p\calH^0j_{0}^{-1}\calF)(=W_{\bullet}(j_{0})_{*}{}^p\calH^{0}j_{0}^{-1}\calF)$ is a direct summand of $W_{\bullet}\calF$.
In this way, we can deduce the isomorphism (\ref{monofilmilAD}) from the isomorphism (\ref{monofil}), and we get Theorem~\ref{main2}.
\end{rem}

\begin{rem}
Let $f\in \CC\{x_{1},\dots,x_{n}\}$ be a convergent power series with $f(0)=0$.
In this case, we can also define the notions in Section~\ref{chapmil}:
the Milnor fiberation, the Milnor fiber $F_{f,0}$, the Milnor monodromies $\Phi_{j}\colon H^{j}(F_{f,0};\CC)\simar H^{j}(F_{f,0};\CC)$, the Newton polyhedron $\Gamma_{+}(f)$, the non-degeneracy at $0$ and the finite set $R_{f}\subset \CC$, similarly to the case where $f$ is a polynomial.
We can also consider a (analytic) mixed Hodge module $\psi_{f}(\QQ^{H}_{\CC^n}[n])$ and a mixed Hodge structure $H^{k}j_{0}^{*}(\psi_{f}(\QQ^{H}_{\CC^n}[n]))$, whose underlying vector space is $H^{k+n-1}(F_{f,0};\QQ)$ for $k\in \ZZ$.
Assume that $f$ is non-degenerate at $0$.
Then, even in this setting we can prove Proposition~\ref{compare1prop}, Theorem~\ref{main1}, Proposition~\ref{compare1propgen}, Lemma~\ref{compare3} and Theorem~\ref{main2} in the same way
(We remark that we do not have the $6$-operations between the derived categories of (analytic) mixed Hodge modules on analytic spaces in general. Therefore we have to be careful to use $\mathrm{D^bMHM}(\CC^n)$ in the proof of Lemma~\ref{compare3}. Nevertheless, in this setting, we have the functors $j_{0}^{*}$ and $j_{0}^{!}$ as in the case of the derived categories of algebraic mixed Hodge modules (see section 2.30 of \cite{MHM}). Thus, the same proofs work even in the case where $f$ is a covergent power series.).\\
The proof in Remark~\ref{newproof} works even in the analytic setting.
Therefore, also in this way, we obtain Theorem~\ref{main1} and \ref{main2} for a convergent power series $f$.
\end{rem}

\section{Applications}

In this section,
we apply Theorems~\ref{main1} and \ref{main2}
to compute the Jordan normal forms of the Milnor monodromies and
the Hodge spectra.
Let $f(x)\in\nPoly$ be a polynomial such that $f(0)=0$.
Assume that it is non-degenerate at $0$.
Let $P$ be the convex hull of $\Ntbd\cup\{0\}$.
Since our formula below will become trivial in the case when the dimension of $P$ less than $n$,
in what follows we assume that the dimension of $P$ is equal to $n$.
Moreover, since the case where $f$ is convenient was already treated by Matsui-Takeuchi~\cite{MT} and M. Saito~\cite{Mexp},
we assume that $f$ is not convenient in this section.
Then $R_{f}$ is not empty and contains $1\in\CC$. 
For $\lambda\notin R_{f}$, by Proposition~\ref{propsymEpoly}, we have 
\[u^2E_{\lambda}(F_{f,0};u,u)
=(-1)^{n-1}l^{*}_{\lambda}(P,\nu;u,u),
\]
where $\nu$ is the piecewise linear function on $P$ defined in Section~\ref{chaphreal}.
Recall that $\calS_{\nu}$ is the polyhedral subdivision of $P$ defined by $\nu$.
By the definition of the $h^*$-polynomial, for $\lambda\notin R_{f}$ we have
\[l^{*}_{\lambda}(P,\nu;u,u)=
\sum_{F\prec\Ntph\sumcom{admissible}}u^{\dim{\Delta_{F}}+1}
l^{*}_{\lambda}(\Delta_{F},\nu;1)\cdot l_{P}(\calS_{\nu},\Delta_{F};u^2),\]
where in the sum $\Sigma$ the face $F$ ranges through the compact admissible ones of $\Ntph$.
The polynomial $l_{P}(\calS_{\nu},\Delta_{F};t)$ is symmetric and unimodal centered at $(n-\dim{F}-1)/2$, i.e. if $a_{i}\in \ZZ$ is the coefficient of $t^i$ in $l_{P}(\calS_{\nu},\Delta_{F};t)$ we have $a_{i}=a_{n-\dim{F}-1-i}$ and $a_{i}\leq a_{j}$ for $0\leq i\leq j\leq (n-\dim{F}-1)/2$.
Therefore, it can be expressed in the form
\[l_{P}(\calS_{\nu},\Delta_{F};t)=\sum_{i=0}^{\lfloor (n-1-\dim{F})/2\rfloor}\tl{l}_{F,i}(t^i+t^{i+1}+\dots+t^{n-1-\dim{F}-i}),\]
for some non-negative integers $\tl{l}_{{F},i}\in\ZZ_{\geq 0}$.
We set
\[\tl{l}_{P}(\calS_{\nu},\Delta_{F},t):=\sum_{i=0}^{\lfloor (n-1-\dim{F})/2\rfloor}\tl{l}_{F,i}t^i.\]
For $k\in\ZZ_{\geq 0}$ and $\lambda\in\CC$ we denote by $J_{k,\lambda}$ the number of the Jordan blocks in $\Phi_{n-1}$ with size $k$ for the eigenvalue $\lambda$.
Then by Theorems~\ref{main1} and \ref{main2}
we obtain the following formula for them.

\begin{cor}\label{jordan}
In the situation as above, 
for any $\lambda\notin R_{f}$ we have
\begin{align*}
\sum_{0\leq k\leq n-1}J_{n-k,\lambda}u^{k+2}
=\sum_{\substack{F\prec\Ntph\sumcom{admissible}}}
u^{\dim{\Delta_{F}}+1}l^{*}_{\lambda}(\Delta_{F},\nu;1)\cdot\tl{l}_{P}(\calS_{\nu},\Delta_{F};u^2),
\end{align*}
where in the sum $\Sigma$ of the right hand side the face $F$ ranges through the admissible ones of $\Ntph$.
\end{cor}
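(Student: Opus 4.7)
The plan is to rewrite the LHS of the corollary as a generating function in the dimensions of the weight-graded pieces of $H^{n-1}(F_{f,0};\CC)_\lambda$, identify this with $l^*_\lambda(P,\nu;u,u)$ via Proposition~\ref{propsymEpoly}, and then unpack the right-hand side using the Katz--Stapledon formula together with the symmetric-unimodal structure of $l_P(\calS_\nu,\Delta_F;t)$. Set $g_w:=\dim\GR^W_{w}H^{n-1}(F_{f,0};\CC)_\lambda$. By Theorem~\ref{main1}, only $j=n-1$ contributes to $E_\lambda(F_{f,0};u,u)$, and by Theorem~\ref{main2} the weight filtration on $H^{n-1}(F_{f,0};\CC)_\lambda$ agrees with the monodromy filtration of $\log\Phi^u_{n-1,\lambda}$ centered at $n-1$. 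The primitive decomposition therefore gives
\[
\sum_w g_w u^w \;=\; \sum_{m\geq 1} J_{m,\lambda}\,u^{n-m}\bigl(1+u^2+\cdots+u^{2(m-1)}\bigr),
\]
and combining with $u^2\sum_w g_w u^w = l^*_\lambda(P,\nu;u,u)$ from Proposition~\ref{propsymEpoly} yields
\[
(1-u^2)\,l^*_\lambda(P,\nu;u,u) \;=\; \sum_{k=0}^{n-1}J_{n-k,\lambda}\,u^{k+2} \;-\; \sum_{m=1}^{n}J_{m,\lambda}\,u^{n+m+2}.
\]
The two summands occupy disjoint degree ranges $[2,n+1]$ and $[n+3,2n+2]$, so extracting the part of $(1-u^2)\,l^*_\lambda(P,\nu;u,u)$ of degree $\leq n+1$ isolates the LHS of the corollary.

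Next I expand $l^*_\lambda(P,\nu;u,u)$ via definition~(iv) specialized at $v=u$,
\[
l^*_\lambda(P,\nu;u,u) \;=\; \sum_{F\in\calS_\nu} u^{\dim F+1}\,l^*_\lambda(F,\nu|_F;1)\cdot l_P(\calS_\nu,F;u^2).
\]
Since $1\in R_f$ and $\lambda\notin R_f$ force $\lambda\neq 1$, all cells on which $\nu|_F$ is integer-valued (namely the vertex $\{0\}$, the compact faces of $\Ntph$ themselves, and the empty cell) contribute $l^*_\lambda(F,\nu|_F;1)=0$; only the top-dimensional simplices $\Delta_G$ for compact $G\prec\Ntph$ survive. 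Writing $l_P(\calS_\nu,\Delta_G;t)=\sum_i \tl l_{G,i}(t^i+t^{i+1}+\cdots+t^{n-1-\dim G-i})$ and multiplying by $(1-u^2)u^{\dim G+2}$ telescopes to
\[
u^{\dim G+2}(1-u^2)\,l_P(\calS_\nu,\Delta_G;u^2) \;=\; u^{\dim G+2}\,\tl l_P(\calS_\nu,\Delta_G;u^2)\;+\;(\text{terms of degree }\geq n+2),
\]
the two parts having degree ranges $[\dim G+2,n+1]$ and $[n+2,2n-\dim G+2]$ respectively. Taking degrees $\leq n+1$ in the identity of the first paragraph therefore gives
\[
\sum_{k=0}^{n-1}J_{n-k,\lambda}\,u^{k+2} \;=\; \sum_{G\prec\Ntph\text{ compact}} u^{\dim G+2}\,l^*_\lambda(\Delta_G,\nu;1)\,\tl l_P(\calS_\nu,\Delta_G;u^2).
\]

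It remains to restrict the sum from all compact faces to admissible ones. For any compact $G\prec\Ntph$, every lattice point $v\in m\Delta_G\cap\ZZ^n$ lies in $M_{\Delta_G}$, and the linear function $m\nu(v/m)$ equals $v_{\Delta_G}(v)/d_G\in (1/d_G)\ZZ$, so $\exp(2\pi\sqrt{-1}\,m\nu(v/m))$ is always a $d_G$-th root of unity. Consequently $w_\lambda(v)\equiv 0$ and hence $l^*_\lambda(\Delta_G,\nu;1)=0$ whenever $\lambda^{d_G}\neq 1$. For an extremal compact $G$, the definition of $R_f$ places every $d_G$-th root of unity in $R_f$, so $\lambda\notin R_f$ forces $\lambda^{d_G}\neq 1$ and kills the contribution of $G$. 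Dropping these terms yields the asserted formula. The main obstacle I expect is the symmetric--unimodal bookkeeping in the second paragraph, namely verifying that the ``tail'' of $(1-u^2)\,l_P(\calS_\nu,\Delta_G;u^2)$ really sits in degrees $\geq n+2$ uniformly in $G$; the truncation argument then works only because this tail has no overlap with the degree window $[2,n+1]$ where both sides of the corollary live.
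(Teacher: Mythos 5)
Your proof is correct and takes essentially the same route as the paper, whose own argument consists of the preliminary identities displayed just before the statement together with the remark ``by Theorems~\ref{main1} and \ref{main2} we obtain the following formula.'' You have filled in the extraction step that the paper leaves implicit: the multiply-by-$(1-u^2)$ and degree-truncation device is exactly the right bookkeeping to read off $J_{m,\lambda}$ from the weight-graded generating function under the monodromy-filtration identification of Theorem~\ref{main2}, and the vanishing $l^*_\lambda(\Delta_G,\nu;1)=0$ for extremal $G$ (forced by $\lambda^{d_G}\neq 1$) is precisely how the sum over cells of $\calS_\nu$ collapses to the admissible faces appearing in the corollary.
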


Finally, we introduce our formula for the Hodge spectrum of $f$ at the origin $0$.
\begin{defi}
\begin{enumerate}
\item
We define a Puiseux polynomial $\mathrm{sp}_{f,0}(t)$ with coefficients in $\ZZ$ by
\[\mathrm{sp}_{f,0}(t)=(-1)^{n-1}\sum_{\alpha\in\QQ\cap[0,n]}\left\{\sum_{j\in\ZZ}(-1)^{j}\dim{\GR^{\lfloor{n-\alpha}\rfloor}_{F}\tl{H^j}(F_{f,0};\CC)_{\exp(-2\pi\sqrt{-1}\alpha)}} \right\}t^{\alpha},\]
where $\GR^{\lfloor{n-\alpha}\rfloor}_{F}\tl{H^j}(F_{f,0};\CC)_{\exp(-2\pi\sqrt{-1}\alpha)}$
is the graded piece with respect to the Hodge filtration of the mixed Hodge structure of $\tl{H}^j(F_{f,0};\QQ)$.
We call it the \textit{Hodge spectrum of $f$ at $0$}.
\item For $\beta\in (0,1)\cap \QQ$ we set $\lambda=\exp(2\pi\sqrt{-1}\beta)$ and we define a Puiseux polynomial $\Sp_{f,0}^{\lambda}(t)$ by
\[\Sp_{f,0}^{\lambda}(t)=(-1)^{n-1}\sum_{i=0}^{n-1}\left\{\sum_{j\in\ZZ}(-1)^{j}\dim{\GR^{\lfloor{n-\beta-i}\rfloor}_{F}\tl{H^j}(F_{f,0};\CC)_{\lambda^{-1}}}\right\}
t^{\beta+i}.\]
\end{enumerate}
\end{defi}
Since $f$ is non-degenerate at $0$,
by setting $v=1$ in Corollary~\ref{epolycomp1} we can express $\Sp_{f,0}(t)$ and $\Sp_{f,0}^{\lambda}(t)$ in terms of $\Gamma_{f}$.
Moreover, 
if $\lambda=\exp(2\pi\sqrt{-1}\beta)$ is not in $R_{f}$,
$\Sp_{f,0}^{\lambda}(t)$ can be rewritten much more simply as follows.
For a compact face $F$ of $\Ntph$, we define a cone $\mathrm{Cone}(F)\subset \RR^n$ by $\mathrm{Cone}(F):= \RR_{\geq 0}F$
and the linear function $h_{F}$ on $\mathrm{Cone}(F)$ which takes the value $0$ at the origin $0\in\RR^n$ and the value $1$ on $F$.
Moreover, for $\beta\in(0,1)\cap \QQ$
we define a Puiseux polynomial $P_{F,\beta}(t)$ by
\[P_{F,\beta}(t):=\sum_{i=0}^{+\infty}\#\{v\in \mathrm{Cone}(F)\cap \ZZ^n_{\geq 0} \ |\ h_{F}(v)=\beta+i\}t^{\beta+i}.\]
Then we obtain the following formula,
which generalizes the one for $\Sp_{f,0}(t)$ in the case where $0\in V$ is an isolated singular point obtained by M. Saito~\cite{Mexp}.
For the corresponding result for the monodromies at infinity, see Theorem~{5.16} of Matsui-Takeuchi~\cite{MTinf}.

\begin{cor}\label{spfor}
In the situation as above, assume moreover that $\lambda$ is not in $R_{f}$.
Then we have
\[\Sp_{f,0}^{\lambda}(t)=\sum_{F\prec\Ntph\sumcom{admissible}}(-1)^{n-1-\dim{F}}(1-t)^{s_{F}}P_{F,\beta}(t),\]
where in the sum $\Sigma$ the face $F$ ranges through the admissible ones of $\Ntph$ and $s_{F}\in\ZZ_{\geq 1}$ is the integer defined in Section~\ref{subsecmot}. 
\end{cor}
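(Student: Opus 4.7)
The plan is to combine the concentration theorem (Theorem~\ref{main1}) with the motivic-Hodge formulas of Section~\ref{chapmot} and extract the Hodge spectrum as a weighted lattice-point generating function supported on the admissible compact faces of $\Ntph$. First I would reduce to a local $h^{*}$-polynomial. Since $R_{f}$ is a union of groups of roots of unity, it is closed under inversion, so $\lambda\notin R_{f}$ implies $\lambda^{-1}\notin R_{f}$. Theorem~\ref{main1} applied to $\lambda^{-1}$ gives $\tl{H}^{j}(F_{f,0};\CC)_{\lambda^{-1}}=0$ for $j\neq n-1$, collapsing the inner alternating sum defining $\Sp_{f,0}^{\lambda}(t)$ to its $j=n-1$ summand. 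Since $\beta\in(0,1)$ implies $\lfloor n-\beta-i\rfloor=n-1-i$ for $0\leq i\leq n-1$, a short rearrangement yields
\[\Sp_{f,0}^{\lambda}(t) = (-1)^{n-1}\,t^{n-1+\beta}\, E_{\lambda^{-1}}(F_{f,0}; t^{-1}, 1).\]
Applying Proposition~\ref{propsymEpoly} to $\lambda^{-1}$ (valid because $f$ is non-convenient, non-degenerate at $0$, and $\dim P=n$) and specializing $v=1$ via Proposition~\ref{hformula}(i) then transforms this into
\[\Sp_{f,0}^{\lambda}(t) = t^{n+\beta}\, l^{*}_{\lambda^{-1}}(P,\nu;t^{-1}).\]

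Next I would expand the local polynomial by its defining recursion: $l^{*}_{\lambda^{-1}}(P,\nu;u)$ is an alternating sum over faces $Q\prec P$ of products $h^{*}_{\lambda^{-1}}(Q,\nu|_{Q};u)\cdot g([Q,P]^{*};u)$ with sign $(-1)^{n-\dim Q}$. The faces of $P=\Conv(\Ntbd\cup\{0\})$ split into $\{0\}$, compact faces $F\prec\Ntbd$ on which $\nu\equiv 1$, and capped polytopes $\Delta_{F}=\Conv(F\cup\{0\})$. Because $\lambda^{-1}\neq 1$, the contributions from $Q=\emptyset$ and from $Q=F\prec\Ntbd$ both vanish, so only the $\Delta_{F}$ terms (and $Q=P$) survive, and for $Q=\Delta_{F}$ the sign is $(-1)^{n-1-\dim F}$. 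Since $\nu_{F}$ is linear on $\Delta_{F}$ with $\nu_{F}|_{\mathrm{Cone}(F)}=h_{F}$, the weighted Ehrhart count $f_{\lambda^{-1}}(\Delta_{F},\nu_{F};m)$ enumerates $v\in m\Delta_{F}\cap\ZZ^{n}$ with $h_{F}(v)\in -\beta+\ZZ$. A standard Ehrhart-series manipulation converts $(1-u)^{\dim\Delta_{F}+1}\sum_{m\geq 0}f_{\lambda^{-1}}(\Delta_{F},\nu_{F};m)u^{m}$ into a finite polynomial supported in $-\beta+\ZZ_{>0}$; after the substitution $u=t^{-1}$ and multiplication by the prefactor $t^{n+\beta}$ of the first step, the shift $-\beta$ is absorbed into the exponent $\beta+i$, producing $P_{F,\beta}(t)$ together with the factor $(1-t)^{s_{F}}$ (the additional factors $(1-t)^{s_{F}-\dim F-1}$ arising from $g([\Delta_{F},P]^{*};t^{-1})$).

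The remaining step is combinatorial: the contributions of extremal compact faces must cancel. For each extremal $F$, the existence of a non-compact face $G\supset F$ with $G\not\subset\partial\RRpn$ produces additional faces of $P$ whose $g([\,\cdot\,,P]^{*};u)$ factors telescope against the $h^{*}_{\lambda^{-1}}$-contribution of $\Delta_{F}$, in exactly the mechanism that yields the admissibility restriction in Proposition~\ref{propsymEpoly}. I expect this cancellation to be the main obstacle, since it requires simultaneously controlling the face lattice of $P$ around each extremal $F$, the Ehrhart denominators of the associated $\Delta_{F}$, and the $g$-polynomial recursions of Section~\ref{KaSpoly}. Once completed, only admissible compact faces of $\Ntph$ contribute, each with sign $(-1)^{n-1-\dim F}$ and factor $(1-t)^{s_{F}}P_{F,\beta}(t)$, giving the claimed formula.
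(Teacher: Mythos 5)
Your first reduction is correct and, via the symmetry in Proposition~\ref{hformula}(ii), equivalent to what the paper does: the paper converts $\dim\GR^{\lfloor n-\beta-i\rfloor}_{F}\tl{H}^{n-1}(F_{f,0};\CC)_{\lambda^{-1}}$ into $\dim\GR^{i}_{F}\tl{H}^{n-1}(F_{f,0};\CC)_{\lambda}$ using the Hodge symmetry of Proposition~\ref{propsymEpoly}, arriving at $\Sp^{\lambda}_{f,0}(t)=t^{\beta-1}l^{*}_{\lambda}(P,\nu;t)$; your form $t^{n+\beta}l^{*}_{\lambda^{-1}}(P,\nu;t^{-1})$ is the same polynomial by the identity $l^{*}_{\lambda}(P,\nu;u,v)=(uv)^{\dim P+1}l^{*}_{\lambda}(P,\nu;v^{-1},u^{-1})=l^{*}_{\bar\lambda}(P,\nu;v,u)$, and $\bar\lambda=\lambda^{-1}$ since $\lambda$ is a root of unity. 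Up to that reparametrization the two routes are identical.

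Your expansion step, however, has a genuine gap and a misconception. First, the defining formula $l^{*}_{\lambda^{-1}}(P,\nu;u)=\sum_{Q\prec P}(-1)^{n-\dim Q}h^{*}_{\lambda^{-1}}(Q,\nu|_{Q};u)\,g([Q,P]^{*};u)$ runs over \emph{faces} $Q$ of $P$, but the capped polytopes $\Delta_{F}=\Conv(F\cup\{0\})$ are in general \emph{cells of the subdivision} $\calS_{\nu}$, not faces of $P$ (the cone over an interior vertex of $\Ntbd$ lies in the interior of $P$). So the claim that the faces of $P$ split into $\{0\}$, $F\prec\Ntbd$, and $\Delta_{F}$ is false, and the $g$-polynomial expansion you invoke will not deliver the $\Delta_{F}$ terms directly. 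The paper instead gets the face sum for free from the already-computed motivic formula: substituting Proposition~\ref{propsymEpoly} into Corollary~\ref{epolycomp1} (and using $h^{*}_{\lambda}(F,0_{F};u,v)=0$ for $\lambda\neq 1$) gives immediately
\[l^{*}_{\lambda}(P,\nu;u,v)=\sum_{F\prec\Ntph\colon\mbox{\tiny compact}}(-1)^{n-1+\dim F}(1-uv)^{s_{F}-\dim F-1}h^{*}_{\lambda}(\Delta_{F},\nu_{F};u,v),\]
which after setting $v=1$ and $s_{F}=\dim\sigma(\Delta_{F})$, $\dim\Delta_{F}=\dim F+1$ is exactly the first line of the paper's computation.

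Second, what you call ``the main obstacle'' --- cancellation of extremal contributions via a telescoping of $g$-polynomials --- is in fact not a cancellation at all but a termwise \emph{vanishing}, and it is elementary: if $F$ is extremal then for $v\in\ZZ^{n}\cap\mathrm{Cone}(F)$ one has $\nu_{F}(v)\in\frac{1}{d_{F}}\ZZ$, so the weight $w_{\lambda}(v)=1$ requires $\lambda^{d_{F}}=1$; since $\lambda\notin R_{f}$ excludes this for every extremal $F$ (and the same for $\lambda^{-1}=\bar\lambda$), all the Ehrhart weights $w_{\lambda}(v)$ vanish and $h^{*}_{\lambda}(\Delta_{F},\nu_{F};u)\equiv 0$. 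Because you left this step undone and described it as a hard combinatorial identity, your argument is incomplete; once you replace the $g$-polynomial expansion by the Corollary~\ref{epolycomp1}/Proposition~\ref{propsymEpoly} identity and observe this vanishing, the rest is the routine Ehrhart manipulation you sketch, and the proof goes through.
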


\begin{proof}
By Theorem~\ref{main1}, for $\beta\in (0,1)\cap\QQ$ such that $\lambda=\exp(2\pi\sqrt{-1}\beta)\notin R_{f}$ we have
the concentration
\[\tl{H^j}(F_{f,0};\CC)_{\lambda}\simeq 0 \qquad (j\neq n-1).\]
Moreover, for $0\leq i\leq n-1$
we have
\begin{align*}
\dim{\GR^{\lfloor{n-i-\beta}\rfloor}_{F}\tl{H}^{n-1}(F_{f,0};\CC)_{\lambda^{-1}}}
&=\sum_{k\in\ZZ}\dim{\GR^{\lfloor{n-i-\beta}\rfloor}_{F}\GR_{k}^{W}\tl{H}^{n-1}(F_{f,0};\CC)_{\lambda^{-1}}}\\
&=\sum_{k\in\ZZ}\dim\GR^{n-1-\lfloor{n-i-\beta}\rfloor}_{F}\GR_{2(n-1)-k-\lfloor{n-\alpha}\rfloor}^{W}\tl{H}^{n-1}(F_{f,0};\CC)_{\lambda}\\
&=\dim\GR^{i}_{F}\tl{H}^{n-1}(F_{f,0};\CC)_{\lambda},
\end{align*}
where in the second (resp. third) equality we used Theorem~\ref{propsymEpoly} and Proposition~\ref{hformula}~(ii) (resp. $n-1-\lfloor{n-i-\beta}\rfloor=i$).
Then we have
\begin{align*}
\Sp_{f,0}^{\lambda}(t)
&=\sum_{i=0}^{n-1}\dim\GR^{\lfloor n-i-\beta\rfloor}_{F}H^{n-1}(F_{f,0};\CC)_{\lambda^{-1}}t^{\beta+i}\\
&=\sum_{i=0}^{n-1}\dim\GR^{i}_{F}H^{n-1}(F_{f,0};\CC)_{\lambda}t^{\beta+i}\\
&=(-1)^{n-1}E_{\lambda}(F_{f,0};t,1)t^{\beta}\\
&=\frac{1}{t}l^{*}_{\lambda}(P,\nu;t)t^{\beta} \qquad (\mbox{by Propositions~\ref{propsymEpoly} and \ref{hformula}~(i)}),
\end{align*}
Furthermore, 
we have
\begin{align*}
&\qquad t^{\beta-1}l^{*}_{\lambda}(P,\nu;t)\\
&=t^{\beta-1}\sum_{{F\prec \Ntph\sumcom{admissible}}}\left\{(-1)^{n-\dim\sigma(\Delta_{F})}(t-1)^{\dim\sigma(\Delta_{F})-\dim\Delta_{F}}h^*_{\lambda}(\Delta_{F},\nu;t)\right\}\\
&=t^{\beta-1}\sum_{{F\prec \Ntph\sumcom{admissible}}}\left\{(-1)^{n-\dim\sigma(\Delta_{F})}(t-1)^{\dim\sigma(\Delta_{F})-\dim\Delta_{F}}(1-t)^{\dim{\Delta_{F}+1}}
\sum_{m\geq 0}f_{\lambda}(P,\nu;m)t^m\right\}\\
&=t^{\beta-1}\sum_{F\prec \Ntph\sumcom{admissible}}\left\{(-1)^{n-1-\dim{F}}(1-t)^{\dim{\sigma({\Delta_{F}})}+1}
\sum_{m\geq 0}f_{\lambda}(P,\nu;m)t^m\right\}\\
&=t^{\beta-1}\sum_{{F\prec \Ntph\sumcom{admissible}}}\left\{(-1)^{n-1-\dim{F}}(1-t)^{\dim{\sigma(\Delta_{F})}}
\sum_{m\geq 1}\lfloor(f_{\lambda}(P,\nu;m)-f_{\lambda}(P,\nu;m-1)\rfloor)t^{m}\right\}\\
&=\sum_{F\prec \Ntph\sumcom{admissible}}(-1)^{n-1-\dim{F}}(1-t)^{\dim{\sigma({\Delta_{F}})}}P_{F,\beta}(t),
\end{align*}
where in the sums $\Sigma$ the faces $F$ range through the admissible ones of $\Ntph$.
Since for an admissible face $F\prec\Ntph$ we have $\dim{\sigma(\Delta_{F})}=s_{F}$, this completes the proof.
\end{proof}

\bibliography{reference}

\end{document}